\newcommand{\kk}{{\Bbbk}}
\newcommand{\GL}{\mbox{\rm GL}}
\newcommand{\SL}{\mbox{\rm SL}}
\newcommand{\C}{{\mathbb C}}
\newcommand{\R}{{\mathbb R}}
\newcommand{\FF}{{\mathbb F}}
\newcommand{\Q}{{\mathbb Q}}
\newcommand{\Z}{{\mathbb Z}}
\newcommand{\HH}{{\mathbb H}}
\newcommand{\f}{\mathfrak}
\newcommand{\car}{\mbox{\rm char\,}}
\newcommand{\ad}{\mbox{\rm ad}}
\newtheorem{theorem}{Theorem}[section]
\newtheorem{lemma}[theorem]{Lemma}
\newtheorem{proposition}[theorem]{Proposition}
\newtheorem{corollary}[theorem]{Corollary}
\theoremstyle{definition}
\newtheorem{remark}[theorem]{Remark}
\begin{document}
\title[Rational equivalence for enveloping algebras]{\bf Rational equivalence for enveloping algebras of three-dimensional Lie algebras}
%\title[short title]{toto}

\author{Jacques Alev}
\address[J.~Alev]{Universit\'e de Reims, Laboratoire de Math\'ematiques (UMR 9008 - CNRS),
Moulin de la Housse, B.P. 1039, 51687 Reims cedex 2 (France)}
\email{jacques.alev@univ-reims.fr}
\author{Fran\c cois Dumas}
\address[F.~Dumas]{Universit\'e Clermont Auvergne, CNRS, LMBP, F-63000 Clermont-Ferrand, France }
\email{Francois.Dumas@uca.fr}
\author{C\'esar Lecoutre}
\address[C.~Lecoutre]{Universit\'e Clermont Auvergne, CNRS, LMBP, F-63000 Clermont-Ferrand, France }
\email{Cesar.Lecoutre@uca.fr}

\begin{abstract}{We study from the point of view of rational equivalence the enveloping algebras of Lie algebras of dimension 3 whose derived Lie subalgebra is of dimension 2, 
over an algebraically closed  base field in arbitrary characteristics.}\end{abstract}

\date\today
\keywords{Lie algebra, enveloping algebra, skewfield, Gelfand-Kirillov hypothesis}
\subjclass[2020]{Primary 16S30; Secondary 16K40, 16K20, 16S36, 17B35}

\maketitle

\section*{Introduction}This article deals with the rational equivalence of enveloping algebras of finite-dimensional Lie algebras, 
that is, their classification up to isomorphism of their skewfields of fractions. 
A structuring axis in the matter is the property of Gelfand-Kirillov initiated in the fundamental article \cite{GK} and since then the subject of numerous developments. 
Our study concerns two families of solvable Lie algebras of dimension three, non necessarily algebraic, in any characteristic. 
Following the classification by \cite{Jac79}, they fall under the case of 3-dimensional  Lie algebras  with derived Lie subalgebra of dimension 2.  
The first family denoted here by $\f g_\alpha$ is paramatrized by the non zero elements of the base field $\kk$ (with $\f g_\alpha$ isomorphic to $\f g_\beta$ if and only if $\beta=\alpha^{\pm 1}$). 
The second one reduces by isomorphism to only one Lie algebra denoted by $\f q$ in this article.

When $\kk$ is algebraically closed, they are the only 3-dimensional non abelian Lie algebras besides $\f {sl}(2)$, 
the Heisenberg algebra and the central extension of the non abelian 2-dimensional Lie algebra. 
It is well known and easy to prove that the enveloping algebras of these three classical examples are rationnally equivalent 
to a Weyl algebra over a purely transcendental extension of $\kk$ and thus they satisfy the Gelfand-Kirillov property. 
Our main goal here is to complete the picture by describing the enveloping skewfields of $\f g_\alpha$ and $\f q$.

The first section is devoted to recalling the context and some useful results. The family of Lie algebras $\f g_\alpha$ is the subject of sections 2 and 3. 
We prove that the Lie algebra $\f g_\alpha$ satisfies the Gelfand-Kirillov property if and only if $\alpha$ lies in the prime subfield $\kk_0$ of $\kk$ 
(Corollary \ref{isoaq}, Theorems \ref{GKl} and \ref{GKlbis}).
The characteristic zero case appears as an example in \cite{GK} and falls within the framework of the proofs of the Gelfand-Kirillov 
conjecture for solvable Lie algebras by \cite{BGR}, \cite{Joseph} or \cite{McConnell}.  
The case where $\kk$ is of characteristic $\ell >0$ is based on a detailed study of the structure of the enveloping skewfield $K(\f g_\alpha)$ over its center $C(\f g_\alpha)$. For $\alpha\notin\FF_\ell$, 
we describe $K(\f g_\alpha)$ as a tensor product over $C(\f g_\alpha)$ of two Weyl skewfields and deduce that the class of $K(\f g_\alpha)$ is of order $\ell$ in the Brauer group of $C(\f g_\alpha)$.

Finding a condition for $K(\f g_\alpha)$ and $K(\f g_\beta)$ to be isomorphic remains only when $\alpha$ and~$\beta$ are not in $\kk_0$. 
We show (Proposition \ref{CS}) that a sufficient condition for such an isomorphism is that $\alpha$ and $\beta$ belong to the same orbit under the homographic action of ${\rm GL}_2(\Z)$ on~$\kk\setminus\kk_0$, and give
examples of application for some finite fields.
We do not know whether or not this condition is necessary in all generality.
The goal of section 3 is to prove (Theorem \ref{mainthm}) that this is the case in characteristic zero if we only consider isomorphisms of valued skewfields
for the valuation canonically associated (in the sense of \cite{KT}) to the derived Lie subalgebra.
In the case where $\kk=\C$ we obtain a complete classification up to valued isomorphism with arithmetical interpretations of the orbits (Corollary \ref{corell}).

Section 4 is devoted to corresponding results for the enveloping skewfield $K(\f q)$ of the Lie algebra $\f q$, 
which does not satisfy the Gelfand-Kirillov property, and to its separation from the skewfields $K(\f g_\alpha)$ 
up to valued isomorphism.

\section{Preliminary results}
\subsection{Classification of three-dimensional Lie algebras}\label{Jac}
Let $\kk$ be an algebraically closed field. The classification by \cite{Jac79} up to isomorphism
of Lie algebras $\f  g$ of dimension 3 over $\kk$ according to the dimension $d$ of the derived Lie subalgebra $\f  g'$  
leads to the following classical examples:  $\f  g$ abelian (if $d=0$),   $\f  g=\f h$ the Heisenberg algebra or $\f  g=\f b$ a 
central extension of the two-dimensional nonabelian Lie algebra (if $d=1$), $\f  g=\f {sl}(2)$   (if $d=3$). The remaining situation $d=2$ splits into the following two cases, which are the object of our study:

(i) A family of Lie algebras $\f {g}_\alpha$ indexed by a nonzero scalar $\alpha$ and whose Lie brackets on a basis $\{x,y,z\}$ are given by
\begin{equation}\label{brakg}[x,y]=y,\quad [x,z]=\alpha z,\quad [y,z]=0.\end{equation}
They satisfy $\f {g}_\alpha\cong\f {g}_\beta$ if and only if $\alpha=\beta$ or $\alpha=\beta^{-1}$.

(ii) A family of Lie algebras $\f {q}_\gamma$ indexed by a nonzero scalar $\gamma$ and whose Lie brackets on a basis $\{x,y,z\}$ are given by
\[[x,y]=y,\quad [x,z]=z+\gamma y,\quad [y,z]=0.\]
It turns out by the change of basis $\{x,y,\gamma^{-1} z\}$ that this family reduces to a single algebra 
$\f {q}=\f {q}_1$, with brackets
\begin{equation}\label{brakq}[x,y]=y,\quad [x,z]=y+z,\quad [y,z]=0\end{equation}

\subsection{Rational equivalence of enveloping algebras}\label{prelimGK}
For Lie algebras of dimension 3, the problem of classification up to isomorphism of the enveloping algebra is solved by \cite{CKL} : 
assuming that $\car\kk\neq2$, two Lie algebras of dimension 3 over $\kk$ are isomorphic if and only if their enveloping algebras are isomorphic as associative $\kk$-algebras. 

The problem is more delicate if we consider rational equivalence, that is the classification of Lie algebras up to isomorphism of their enveloping skewfields. 
Let us recall that the enveloping algebra of any finite dimensional Lie algebra $\f {g}$ is a noetherian domain and thus admits a skewfield of fractions. 
According to the notations of \cite{Dix}, we denote by $U(\f  g)$ the enveloping algebra, by $K(\f {g})$ its skewfield of fractions and by $C(\f  g)$ the center of $K(\f g)$. 

A main argument on this topic comes from the seminal paper \cite{GK}:
a Lie algebra $\f  g$ over $\kk$ is said to satisfy the Gelfand-Kirillov property if there exist integers $n\geqslant 1,m\geqslant 0$
such that $K(\f  g)$ is $\kk$-isomorphic to a Weyl skewfield $\mathcal D_{n,m}(\kk)$, where $\mathcal D_{n,m}(\kk)$ is the skewfield of fractions 
of a Weyl algebra $A_n(L)$ over a commutative field $L$ which is a rational function field of transcendence degree $m$ over $\kk$. 
The literature on this subject is extremely rich (see for instance references in \cite{AOV} or \cite{Premet}).
Let us simply observe here that, for obvious reasons of transcendence degree, a 3-dimensional non-abelian Lie algebra 
$\f g$ satisfies the Gelfand-Kirillov property if and only $K(\f g)$ is $\kk$-isomorphic to a Weyl field $\mathcal D_{1,1}(\kk)$. Explicitly the Weyl algebra $A_1(L)$ is 
the associative algebra generated over a field $L$ by two elements $p$ and $q$ satisfying the commutation relation $pq-qp=1$, and $\mathcal D_{1,1}(\kk)$ is 
the skewfield of fractions of $A_1(L)$ when $L=\kk(t)$ is a purely transcendental extension of~$\kk$. 

Back to the classification \ref{Jac}, it is well known that the Gelfand-Kirillov property is satisfied in the three cases of $\f h$, $\f b$ and $\f {sl}(2)$. 
On the contrary the Lie algebra $\f g_\alpha$ when $\kk$ is of characteristic zero and $\alpha$ is not a rational 
number appears in the original paper \cite{GK} as a typical example of a non algebraic Lie algebra which does not satisfy the Gelfand-Kirillov property. 

\subsection{Discrete valuations on the enveloping skewfields}\label{DV}The authors of \cite{KT} define for any Lie subalgebra $\f  a$ of a finite dimensional Lie algebra~$\f  g$ 
a degree function on $U(\f  g)$ canonically associated to $\f  a$. 
Denoting by $\f  c$ a direct summand of $\f  a$ in $\f  g$ and by $y_1,\ldots,y_k$ a basis of $\f  c$, 
any element $u$ of $U(\f  g)$ can be written uniquely as a finite sum $u=\sum_{m}a_my_1^{m_1}\cdots y_k^{m_k}$ with $m=(m_1,\ldots,m_k)\in\Z_{\geqslant 0}^k$ and $a_m\in\mathcal U(\f a)$ (see \cite{Dix} proposition 2.2.7).
We set $\deg u=\max_{a_m\not=0}|m|$ where $|m|=m_1+\cdots+m_k$. 

It is relevant within the framework of the classification considered in \ref{Jac} to choose for $\f a$ the derived subalgebra of $\f g$. 
In the case of the algebras $\f  g_\alpha$ and $\f  q$ studied here, $\f  a=\kk y\oplus\kk z$ is also the only abelian ideal of codimension~1. We have $U(\f a)=\kk[y,z]$ then $\deg(y)=\deg(z)=0$, 
and $\deg(x)=1$ by taking $\f  c=\kk x$. 
We introduce the discrete valuation $v=-\deg$ on $U(\f g_\alpha)$ and on $U(\f q)$. 
Its canonical extension provides the enveloping skewfields $K(\f g)$ and $K(\f q)$ with a structure of valued skewfields.

%%%%%%%%%%%%%%%%%%%%%%%%%%%%%%%%%%%%%%%%%%%%%%%%%%%%%%%%%%%%%%%%%%%%%%%%%

\section{Enveloping skewfields of the Lie algebras $\f {g}_\alpha$}
\label{galpha}
\subsection{Notations}\label{notgalpha}
In this section, $\kk$ is a field.
For any $\alpha\in\kk^\times$, we consider the Lie algebra $\f g_\alpha$ over $\kk$ whose brackets on a basis $\{x,y,z\}$ are defined by relations \eqref{brakg}. 
By the Poincar\'e-Birkhoff-Witt Theorem, the enveloping algebra $U(\f g_\alpha)$ is the associative $\kk$-algebra generated by three generators  $x,y,z$ and relations
\begin{equation}\label{Ugrel}yz=zy,\quad xy-yx=y,\quad xz-zx=\alpha z.\end{equation}
It can be viewed as the iterated Ore extension
\begin{equation}\label{UgOre}U(\f g_\alpha)=\kk[y,z][x\,;\,D_\alpha],\quad \text{where } D_\alpha=y\partial_y+\alpha z\partial_z.\end{equation}
This means that any element $U(\f g_\alpha)$ can be  written uniquely as a finite sum $\sum_{i=0}^nf_i(y,z)x^i$ with $f_i\in\kk[y,z]$, and 
\begin{equation}\label{UgOre2}xf=fx+D_\alpha(f) \text{ for any } f\in\kk[y,z].\end{equation}
Denoting again by $D_\alpha$ the extension of $D_\alpha$ to a $\kk$-derivation of the field $\kk(y,z)$, we can embed $U(\f g_\alpha)$ in the algebra $U'(\f g_\alpha)=\kk(y,z)[x\,;\,D_\alpha]$, 
and their common skewfield of fractions is classically denoted by
\begin{equation}\label{UgK}K(\f g_\alpha)=\kk(y,z)(x\,;\,D_\alpha).
\end{equation}
The description of $K(\f g_\alpha)$ splits into two quite different studies depending on whether $\kk$ is of zero or prime characteristic.

\subsection{Center and Gelfand-Kirillov property in zero characteristic}\label{zc}
We suppose in this section that $\kk$ is of characteristic zero and identify the prime subfield of $\kk$ with $\Q$.
We fix a non zero element $\alpha$ in $\kk$.

\begin{proposition}Let $C(\f g_\alpha)$ be the center of $K(\f{g}_\alpha)$. 
If $\alpha\in\kk\setminus\Q$, then $C(\f g_\alpha)=\kk$. 
If $\alpha\in\Q$, then $C(\f{g}_\alpha)=\kk(y^{p}z^{-q})$ where $\alpha=p/q$ with $p,q\in\Z$, $q\not=0$ and $\gcd(p,q)=1$.
\end{proposition}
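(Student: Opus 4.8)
The plan is to reduce the computation of $C(\f g_\alpha)=Z(K(\f g_\alpha))$ to the determination of the constants of the derivation $D_\alpha$ on the field $F:=\kk(y,z)$. Since $K(\f g_\alpha)=F(x\,;\,D_\alpha)$ is the skewfield of fractions of the ring $F[x\,;\,D_\alpha]$ generated by $F$ and $x$, an element is central if and only if it commutes with every element of $F$ and with $x$; and because $xf-fx=D_\alpha(f)$ for $f\in F$, commuting with $x$ amounts, on elements of $F$, to lying in $\ker D_\alpha$. So the statement will follow from two facts: first, that the centralizer of $F$ inside $K(\f g_\alpha)$ is $F$ itself, whence every central element already lies in $F$; and second, an explicit description of $\ker D_\alpha\subseteq F$.

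The hard part, and the place where the characteristic-zero hypothesis is decisive, is to show that the centralizer of $F$ equals $F$. For this I would embed $K(\f g_\alpha)$ into the skew Laurent series skewfield $F((x^{-1}\,;\,D_\alpha))$, whose elements are written $\eta=\sum_{i\le N}c_ix^i$ with $c_i\in F$, subject to the commutation rule $x^if=\sum_{k\ge 0}\binom ik D_\alpha^k(f)\,x^{i-k}$ valid for every $i\in\Z$ with the usual generalized binomial coefficients (for $i=-1$ this reads $x^{-1}f=\sum_{k\ge0}(-1)^kD_\alpha^k(f)x^{-1-k}$). Imposing $\eta f=f\eta$ and comparing coefficients of $x^m$ gives, for each $m$, the relations $\sum_{k\ge1}\binom{m+k}k c_{m+k}D_\alpha^k(f)=0$ for all $f\in F$. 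Read at the top level $m=N-1$ this says $Nc_ND_\alpha(f)=0$; as $D_\alpha\ne0$ (indeed $D_\alpha(y)=y$) and $c_N\ne0$, the field being a domain and $\car\kk=0$ force $N=0$. A descending induction then finishes the proof: at level $m=-(j+1)$ with $j\ge1$ all terms but the one with $k=1$ vanish by the induction hypothesis, leaving $-j\,c_{-j}D_\alpha(f)=0$ and hence $c_{-j}=0$. Thus $\eta=c_0\in F$. It is precisely the integer factors $N$ and $-j$ that must be invertible here, which is why the prime-characteristic case requires a different treatment.

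Granting this, a central element is an element of $F$ killed by $D_\alpha$, so it remains to compute $\ker D_\alpha$ in $F=\kk(y,z)$. Here $D_\alpha$ acts diagonally on monomials by $D_\alpha(y^az^b)=(a+\alpha b)y^az^b$, giving a weight decomposition of $\kk[y,z]$ into eigenspaces $V_\lambda$. For $f=P/Q$ in lowest terms with $D_\alpha f=0$, the identity $D_\alpha(P)Q=PD_\alpha(Q)$ together with $\gcd(P,Q)=1$ and the fact that $D_\alpha$ preserves total degree forces $D_\alpha(P)=cP$ and $D_\alpha(Q)=cQ$ for a common scalar $c$, so $P,Q\in V_c$. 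If $\alpha\notin\Q$ then $1$ and $\alpha$ are linearly independent over $\Q$, so each $V_c$ is at most one-dimensional, $P$ and $Q$ are scalar multiples of the same monomial, and $f\in\kk$; thus $C(\f g_\alpha)=\kk$. If $\alpha=p/q$ with $\gcd(p,q)=1$, then $u:=y^pz^{-q}$ has weight $p+\alpha(-q)=0$, and the monomials of a fixed weight differ exactly by integer powers of $u$; hence $P$ and $Q$ are monomials times Laurent polynomials in $u$, their quotient $f$ lies in $\kk(u)$, and since conversely $D_\alpha(u)=0$ we obtain $C(\f g_\alpha)=\ker D_\alpha=\kk(y^pz^{-q})$, as claimed.
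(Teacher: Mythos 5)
Your proof is correct, and it reaches the statement by a genuinely more self-contained route than the paper's. The shared skeleton is the reduction of $C(\f g_\alpha)$ to $\ker D_\alpha$ followed by a computation of that kernel, but you execute both steps differently. For the reduction, the paper simply cites \cite[Theorem 5.6]{Goo}, whereas you reprove it by embedding $K(\f g_\alpha)$ into the skew Laurent series field in $x^{-1}$ and identifying coefficients; this is precisely the pseudodifferential field $F(\f g_\alpha)$ that the paper only introduces in Section \ref{pdo}, and your argument has the virtue of isolating exactly where characteristic zero enters (the invertibility of the integers $N$ and $j$), which is a real issue since Theorems \ref{GKl} and \ref{GKlbis} show the center is much larger in characteristic $\ell$. (One micro-slip in the descending induction: the term $k=j+1$, which involves $c_0$, vanishes because the generalized binomial coefficient $\binom{0}{j+1}$ is zero, not by the induction hypothesis, and the terms $k>j+1$ vanish because the positive-index coefficients are already known to be zero; this is trivially repaired.) For the kernel computation, the paper expands $f$ in $\kk(\!(y)\!)(\!(z)\!)$, reads off the support condition $i+\alpha j=0$, and in the rational case passes to the birational coordinates $z'=y^pz^{-q}$, $y'=y^vz^u$ with $pu+qv=1$; you instead use the eigenspace decomposition of $\kk[y,z]$ under $D_\alpha$ together with a Darboux-polynomial divisibility argument on $f=P/Q$ in lowest terms, showing $P$ and $Q$ lie in a common weight space. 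Both are sound; what the paper's choice buys is that the auxiliary eigenvector $y'$ (with $D_\alpha(y')=\lambda y'$, $\lambda\neq 0$) is reused immediately in the proof of Corollary \ref{isoaq} to exhibit the Weyl field structure, whereas your kernel argument produces $z'$ but no such companion variable; what your choice buys is independence from \cite{Goo} and a transparent view of the role of the characteristic.
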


\begin{proof}By \eqref{UgK} we deduce from Theorem 5.6 of \cite{Goo} that $C(\f g_\alpha)$ is the kernel of the derivation $D_\alpha$ of $\kk(y,z)$.
Any element $f\in\kk(y,z)$ can be expanded in the  extension $\kk(\!(y)\!)(\!(z)\!)$ as a series
$f=\sum_{j\geqslant j_0}(\sum_{i\geqslant i_j}\lambda_{ij}y^i)z^j$. Applying $D_\alpha=y\partial_y+\alpha z\partial_z$ it follows that
$f\in C(\f g_\alpha)$ if and only if $i+\alpha j=0$ for any couple $(i,j)$ in the support of $f$.
If $\alpha\notin\Q$, this forces $i=j=0$ thus $f\in\kk$.
Suppose now that  $\alpha=p/q$ with $p,q\in\Z$, $q\not=0$ and $\gcd(p,q)=1$. 
In particular $z'=y^{p}z^{-q}$ satisfies $D_\alpha(z')=0$.
Since $\gcd(p,q)=1$ there exists $u,v\in\Z$ such that $pu+qv=1$.
Setting $y'=y^{v}z^{u}$, we have $D_\alpha(y')=\lambda y'$ with $\lambda=v+\alpha u\in\kk^\times$ and $\kk(y,z)=\kk(y',z')$ by birational change of variables.
It follows that $C(\f{g}_\alpha)=\kk(z')$.
\end{proof}

\begin{corollary}
\label{isoaq}
The Lie algebra $\f{g}_\alpha$ satisfies the Gelfand-Kirillov property if and only if $\alpha~\in~\Q$. In this case $K(\f{g}_\alpha)$
is $\kk$-isomorphic to $\mathcal D_{1,1}(\kk)$.
\end{corollary}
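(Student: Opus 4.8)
The plan is to prove both implications by combining the preceding proposition on the center with the observation recalled in \ref{prelimGK}, namely that a $3$-dimensional non-abelian Lie algebra satisfies the Gelfand-Kirillov property if and only if its enveloping skewfield is $\kk$-isomorphic to the Weyl field $\mathcal D_{1,1}(\kk)$. With this reduction in hand, the whole question becomes one about $\mathcal D_{1,1}(\kk)$.

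For the necessity I would argue by transcendence degree of the center. In characteristic zero the center of $\mathcal D_{1,1}(\kk)=\Frac A_1(\kk(t))$ is the purely transcendental field $\kk(t)$, of transcendence degree $1$ over $\kk$. Hence, if $\f g_\alpha$ satisfied the Gelfand-Kirillov property, then $K(\f g_\alpha)\cong\mathcal D_{1,1}(\kk)$ as $\kk$-algebras would force $C(\f g_\alpha)$ to have positive transcendence degree over $\kk$. By the previous proposition this excludes $\alpha\in\kk\setminus\Q$, for which $C(\f g_\alpha)=\kk$, and therefore forces $\alpha\in\Q$.

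For the sufficiency, assume $\alpha=p/q$ with $\gcd(p,q)=1$ and $q\neq0$, and reuse the birational change of variables from the proof of the proposition: set $z'=y^pz^{-q}$ with $D_\alpha(z')=0$ and $y'=y^vz^u$ with $D_\alpha(y')=\lambda y'$, where $pu+qv=1$ and $\lambda=v+\alpha u=1/q\in\kk^\times$, so that $\kk(y,z)=\kk(y',z')$. Since $z'$ lies in the kernel of $D_\alpha$ it is central in $K(\f g_\alpha)=\kk(y',z')(x\,;\,D_\alpha)$, and after rescaling $X=\lambda^{-1}x$ one gets $[X,z']=0$ and $[X,y']=y'$. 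I would then exhibit the Weyl structure explicitly by setting $P=X(y')^{-1}$ and $Q=y'$: using $Xy'=y'(X+1)$ a direct computation gives $PQ=X$, $QP=X-1$, hence $PQ-QP=1$, and both $P,Q$ commute with $z'$.

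Finally, since $X=PQ$ and $\kk(y,z)=\kk(y',z')$, the elements $P,Q,z'$ generate $K(\f g_\alpha)$ as a skewfield, so $K(\f g_\alpha)$ is the skewfield of fractions of the Weyl algebra $A_1(\kk(z'))$. As $z'=y^pz^{-q}$ is transcendental over $\kk$, the field $\kk(z')$ is a purely transcendental extension of transcendence degree $1$, whence $K(\f g_\alpha)\cong\mathcal D_{1,1}(\kk)$. The main obstacle is this sufficiency direction: keeping track of the change of variables and verifying that $P=X(y')^{-1}$ and $Q=y'$ both realize the Weyl relation and generate the whole skewfield over the central field $\kk(z')$; the necessity then follows immediately from comparing the transcendence degrees of the centers.
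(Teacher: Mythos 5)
Your proposal is correct and takes essentially the same route as the paper: the necessity direction is the same transcendence-degree obstruction drawn from the preceding proposition (the paper phrases it as $K(\f g_\alpha)$ having dimension $3$ over its center $\kk$ while a Weyl skewfield $\mathcal D_{1,1}(\kk)$ has dimension $2$ over its center $\kk(t)$; you equivalently compare the transcendence degrees of the centers, $0$ versus $1$), and the sufficiency direction reuses the identical birational change of variables $y',z'$ with the same rescaling of $x$ to exhibit the Weyl relations, your $P=X(y')^{-1}$ being the paper's $x'=\lambda^{-1}(y')^{-1}x$ up to the immaterial order of the factors.
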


\begin{proof}If $\alpha\notin\Q$, the skewfield $K(\f g_\alpha)$ is of dimension 3 over its center $\kk$ in the sense of \cite{GK}  
and then cannot be isomorphic to a Weyl skewfield. If $\alpha\in\Q$ we use the notations of the previous proof to deduce that $K(\f g_\alpha)=\kk(y',z')(x\,;\,D_\alpha)$ with relations
$[x,z']=[y',z']=0$ and $[x,y']=\lambda y'$. We set $x'=\lambda^{-1}{y'}^{-1}x$ to conclude that $K(\f g_\alpha)$ is the skewfield generated over $\kk$ by $x',y',z'$ with relations $[x',y']=1$ and $[x',z']=[y',z']=0$.
\end{proof}

\subsection{Center and Gelfand-Kirillov property in prime characteristic}\label{CGKell}
We suppose in this section that $\kk$ is of characteristic $\ell >0$ and identify the prime subfield of $\kk$ with the finite field $\FF_\ell$. We fix a non zero element~$\alpha$ in~$\kk$.
As in \ref{zc}, the situation is quite different depending on whether or not~$\alpha$ lies in the prime subfield.

\begin{theorem}\label{GKl}
We suppose that $\alpha\in\FF_\ell^\times$. Then:
\begin{itemize}
 \item[(i)] The center $C(\f g_\alpha)$ of $K(\f{g}_\alpha)$ is a rational function field of transcendence degree 3 over $\kk$.
More precisely $C(\f{g}_\alpha)=\kk(x^\ell-x,y^\ell,z')$, where $z'=y^{-a}z$ for $1\leqslant a\leqslant \ell-1$ such that $\overline a=\alpha$. 
\item[(ii)] The Lie algebra $\f{g}_\alpha$ satisfies the Gelfand-Kirillov property. More precisely $K(\f{g}_\alpha)$ is $\kk$-isomorphic to $\mathcal D_{1,1}(\kk)$ and hence is of dimension $\ell^2$ over its center.
\end{itemize}
\end{theorem}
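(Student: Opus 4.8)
The plan is to put $K(\f g_\alpha)$ into a Weyl normal form by a birational change of variables, which settles (ii) at once, and then to read the center off that normal form. Following the pattern of the proof of Corollary \ref{isoaq}, I first set $z'=y^{-a}z$ and compute $D_\alpha(z')=(\alpha-\overline a)z'=0$, since $\overline a=\alpha$ in $\kk$; thus $z'$ is killed by $D_\alpha$, hence central in $K(\f g_\alpha)$, and $\kk(y,z)=\kk(y,z')$ with $D_\alpha$ acting by $D_\alpha(y)=y$ and $D_\alpha(z')=0$. Next I put $x'=y^{-1}x$ and check, using \eqref{UgOre2}, that $x'y-yx'=1$ while $x'$ commutes with the central element $z'$. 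Therefore $K(\f g_\alpha)$ is the skewfield generated over $\kk$ by $x',y,z'$ subject to $[x',y]=1$ and $[x',z']=[y,z']=0$, which is precisely the Weyl skewfield $\mathcal D_{1,1}(\kk)$ over the purely transcendental base $\kk(z')$; this proves the Gelfand--Kirillov property.

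The next step is to exhibit the three central elements directly. For $y^\ell$, relation \eqref{UgOre2} gives $[x,y^\ell]=D_\alpha(y^\ell)=\ell y^\ell=0$, so $y^\ell$ is central. For $x^\ell-x$, I use the Leibniz expansion in the Ore extension \eqref{UgOre}: for $f\in\kk(y,z)$ one has $x^\ell f=\sum_{k=0}^{\ell}\binom{\ell}{k}D_\alpha^{k}(f)x^{\ell-k}$, so in characteristic $\ell$ every intermediate binomial coefficient vanishes and $[x^\ell,f]=D_\alpha^{\ell}(f)$, whence $[x^\ell-x,f]=(D_\alpha^{\ell}-D_\alpha)(f)$. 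Expanding $f$ in $\kk(\!(y)\!)(\!(z)\!)$ exactly as in the preceding Proposition, $D_\alpha$ multiplies the monomial $y^iz^j$ by $i+\alpha j\in\FF_\ell$, and since $c^\ell=c$ for every $c\in\FF_\ell$ we get $D_\alpha^{\ell}=D_\alpha$ on $\kk(y,z)$. Hence $x^\ell-x$ commutes with $\kk(y,z)$ and with $x$, so it is central. Together with $y^\ell$ and $z'$ this gives $\kk(x^\ell-x,y^\ell,z')\subseteq C(\f g_\alpha)$.

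It then remains to show there is nothing more in the center and to match generators. Writing $x=yx'$ and setting $q=y$, $p=x'$ (so that $pq-qp=1$ and $x=qp$), an immediate induction based on $hp=p(h-1)$ for $h=qp$ yields the normal-ordering identity $q^{n}p^{n}=\prod_{k=0}^{n-1}(qp-k)$; for $n=\ell$ this reads $y^\ell x'^{\ell}=\prod_{k=0}^{\ell-1}(x-k)=x^\ell-x$, using $T^\ell-T=\prod_{c\in\FF_\ell}(T-c)$ in $\kk[T]$. Consequently $\kk(x^\ell-x,y^\ell,z')=\kk(x'^{\ell},y^\ell,z')$. In the Weyl normal form the latter is the standard center of $\mathcal D_{1,1}(\kk)$ in characteristic $\ell$, over which the $\ell^2$ monomials $x'^{i}y^{j}$ ($0\leqslant i,j\leqslant\ell-1$) form a basis, so $[K(\f g_\alpha):\kk(x'^{\ell},y^\ell,z')]=\ell^2$. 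Since this subfield is contained in $C(\f g_\alpha)$, since a division algebra has dimension a perfect square over its center, and since $K(\f g_\alpha)$ is noncommutative, the multiplicativity of degrees $[K:\kk(x'^{\ell},y^\ell,z')]=[K:C(\f g_\alpha)]\,[C(\f g_\alpha):\kk(x'^{\ell},y^\ell,z')]=\ell^2$ forces $C(\f g_\alpha)=\kk(x'^{\ell},y^\ell,z')=\kk(x^\ell-x,y^\ell,z')$, a rational function field of transcendence degree $3$, and $[K(\f g_\alpha):C(\f g_\alpha)]=\ell^2$.

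I expect the positive-characteristic bookkeeping to be the delicate part: the vanishing of the middle binomial coefficients giving $[x^\ell,f]=D_\alpha^\ell(f)$, the Frobenius identity $D_\alpha^\ell=D_\alpha$ arising from $\alpha\in\FF_\ell$, and above all the normal-ordering identity $y^\ell x'^{\ell}=x^\ell-x$, which is what converts the abstract center of the Weyl skewfield into the explicit generators of the statement. The only external input is the structure of $\mathcal D_{1,1}(\kk)$ in characteristic $\ell$, namely its center and its rank $\ell^2$ over that center, used to guarantee that $C(\f g_\alpha)$ is no larger than $\kk(x^\ell-x,y^\ell,z')$.
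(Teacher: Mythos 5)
Your proof is correct, but it is organized differently from the paper's and hinges on a computation the paper never uses. The paper proves (i) first: from the relations $x^iy=y(x+1)^i$ and $x^iz=z(x+\alpha)^i$ it deduces that $x^\ell-x$ commutes with $y$ and $z$, so that $C_\ell=\kk(x^\ell-x,y^\ell,z^\ell)\subseteq C(\f g_\alpha)$; since $[K(\f g_\alpha):C_\ell]=\ell^3$ and the dimension of a skewfield over its center is a perfect square, it gets $[K(\f g_\alpha):C(\f g_\alpha)]=\ell^2$, then adjoins $z'=y^{-a}z$ (noting $z^\ell=(z')^\ell(y^\ell)^a$) by a degree argument; assertion (ii) only comes afterwards, by setting $x'=xy^{-1}$. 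You go the other way: you establish the Weyl normal form first (which settles (ii)), you obtain the centrality of $x^\ell-x$ from the Jacobson-type formula $[x^\ell,f]=D_\alpha^\ell(f)$ combined with $D_\alpha^\ell=D_\alpha$ (valid precisely because the eigenvalues $i+\alpha j$ lie in $\FF_\ell$), and you then identify the center through the normal-ordering identity $y^\ell x'^{\ell}=\prod_{k=0}^{\ell-1}(x-k)=x^\ell-x$, which converts $\kk(x^\ell-x,y^\ell,z')$ into $\kk(x'^{\ell},y^\ell,z')$, the standard center of $\mathcal D_{1,1}(\kk)$ in characteristic $\ell$, before finishing with the same square-plus-multiplicativity argument. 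The trade-off is clear: your identification of the center imports the classical description of the center and rank $\ell^2$ of the Weyl skewfield --- a fact the paper also invokes, but only later, in the proof of Theorem \ref{GKlbis} --- whereas the paper's treatment of (i) is self-contained, using nothing beyond the square-dimension property; in exchange, your identity $y^\ell x'^{\ell}=x^\ell-x$ makes completely explicit how the Lie-theoretic central generator $x^\ell-x$ sits inside the Weyl presentation, a bridge the paper leaves implicit.
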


\begin{proof}It follows from \eqref{UgOre2} and \eqref{UgK} that $\ker D_\alpha\subset C(\f g_\alpha)$.
Therefore $y^\ell$ and~$z^\ell$ are elements of $C(\f g_\alpha)$. 
Thanks to relations \eqref{Ugrel} we have for all $i\in\Z$
\begin{equation}
\label{relx}
x^iy=y(x+1)^i \quad\mbox{and}\quad  x^iz=z(x+\alpha)^i.
\end{equation}
In particular
\begin{equation}
\label{deg l}
(x^\ell-x)y=y(x^\ell-x)\ \ \text{and}\ \ (x^\ell-\alpha^{\ell-1}x)z=z(x^\ell-\alpha^{\ell-1}x).
\end{equation}
Since $\alpha\in\FF_\ell^\times$, we have $\alpha^{\ell-1}=1$ and the element $x^{\ell}-x=x^\ell-\alpha^{\ell-1}x$ commutes with both $y$ and $z$. 
Thus the field $C_\ell=\kk(x^\ell-x,y^\ell,z^\ell)$ is contained in  $C(\f g_\alpha)$. 
Since the dimension of $K(\f g_\alpha)$ over its center is a square and the dimension of $K(\f g_\alpha)$ over $C_\ell$ is $\ell^3$, then the dimension of $K(\f g_\alpha)$ over $C(\f g_\alpha)$ is necessarily $\ell^2$.
The element $z'=y^{-a}z\in\kk(y,z)$ satisfies
\[D(z')=D_\alpha(y^{-a})z+y^{-a}D_\alpha(z)=-\overline{a}D_\alpha(y)y^{-a-1}z+\alpha y^{-a}z=0,\]
hence $z'\in C(\f g_\alpha)$.
Since $z'\notin \kk(x^\ell-x,y^\ell,z^\ell)$ we deduce by a degree argument that $C(\f g_\alpha)=\kk(x^\ell-x,y^\ell,z^\ell,z')$ and conclude that $C(\f g_\alpha)=\kk(x^\ell-x,y^\ell,z')$ because $z^\ell=(z')^\ell (y^\ell)^{a}$.
Finally $K(\f g_\alpha)$ is generated by $x,y$ and $z'$ with relations $[x,y]=y$ and $[x,z']=[y,z']=0$.
Setting $x'=xy^{-1}$ we obtain $[x',y]=1$ and $[x',z']=[y,z']=0$ and the proof is complete.
\end{proof}

The rest of this section is devoted to the case where $\alpha\notin\FF_\ell$. We start by the following lemma  which introduces a central element.

\begin{lemma}\label{ccentral}We suppose that $\alpha\notin\FF_\ell$.
Let $\mu=(\alpha^\ell-\alpha)^{\ell-1}$ and $\lambda=-\mu-1$. Then the element
$$c=x^{\ell^2}+\lambda x^\ell+\mu x =(x^{\ell}-x)^{\ell}-\mu(x^{\ell}-x)$$ of $U(\f g_\alpha)$ is central in  $K(\f{g}_\alpha)$.
\end{lemma}
\begin{proof} Using relations \eqref{relx} we have
\begin{align*}
cy&=x^{\ell^2}y+\lambda x^\ell y+\mu xy=y\big((x+1)^{\ell^2}+\lambda(x+1)^{\ell}+\mu(x+1)\big)\\
&=y(x^{\ell^2}+1+\lambda x^\ell+\lambda+\mu x+\mu)=yc+y(1+\lambda+\mu)=yc,
\end{align*}
and
\begin{align*}
cz&=x^{\ell^2}z+\lambda x^\ell z+\mu xz=z\big((x+\alpha)^{\ell^2}+\lambda(x+\alpha)^{\ell}+\mu(x+\alpha)\big)\\
&=zc+z\big(\alpha^{\ell^2}+\lambda \alpha^\ell+\mu \alpha)=zc+z\big(\alpha^{\ell^2}-\alpha^\ell-\mu(\alpha^\ell-\alpha)\big)=zc.
\end{align*}
as desired.
\end{proof}

\begin{remark}
The element $c$ can also be obtained as an element of $\kk(x)$ that is invariant under both actions of $y$ and $z$.
Since $yx=(x-1)y$ and $zx=(x-\alpha)z$,
these actions correspond to automorphisms $\sigma_1$ and $\sigma_\alpha$ of $\kk(x)$ where $\sigma_\gamma$ is the $\kk$-automorphism of $\kk(x)$ defined for any nonzero scalar $\gamma\in\kk$ by $\sigma_\gamma(x)=x-\gamma$.
It is a classical fact from modular invariant theory (see Theorem 1.11.2 from \cite{CW} for the homogenized case) that $\kk(x)^{\sigma_\gamma}=\kk(t_\gamma)$ where $t_\gamma=\prod_{i\in\FF_\ell}\sigma^i(x)=x^\ell-\gamma^{\ell-1}x$.
Indeed we have $\kk(t_\gamma)\subset \kk(x)^{\sigma_\gamma}\subset \kk(x)$ and we conclude by a degree argument since the extension $\kk(t_\gamma)\subset\kk(x)$ is of prime degree $\ell$. 
Applying this result successively to $\sigma_1$ and $\sigma_\alpha$, we obtain $(\kk(x)^{\sigma_1})^{\sigma_\alpha}=\kk(t_1)^{\sigma_\alpha}=\kk(c)$ for $t_1=x^\ell-x$, $\sigma_\alpha(t_1)=t_1-(\alpha^\ell-\alpha)^{\ell-1}$ and 
$c=t_1^\ell-(\alpha^\ell-\alpha)^{\ell-1}t_1$, which is the element defined in Lemma \ref{ccentral}.
\end{remark}

Later on we will need the two following classical arguments on rationality in commutative Laurent series. 
For the convenience of the reader, we give in the following lemma a formulation and proof adapted to our context.

\begin{lemma}\label{lemmecomm}{ \ }
\begin{itemize}
 \item[(i)]\label{Bbkibis}
Let $K \subset L$ be a field extension.
Then we have $K(\!(X)\!)\cap L(X)=K(X)$.
\item[(ii)]\label{intersection} Let $K$ be a field of positive characteristic $\ell>0$.
Then we have $K(X)\cap K(\!(X^\ell)\!)=K(X^\ell)$.
 \end{itemize}
\end{lemma}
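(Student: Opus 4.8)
The plan is to prove both parts by the same strategy: exhibit the obvious inclusions, then upgrade one of them to an equality by a direct manipulation of Laurent series. Both statements say that a certain ``rational and something extra'' intersection collapses to the smaller visible field, and in each case the containment $\supseteq$ is trivial, so the work is entirely in $\subseteq$.

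\medskip

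For part (i), the inclusion $K(X)\subseteq K(\!(X)\!)\cap L(X)$ is clear, since every rational function over $K$ is a Laurent series over $K$ and also lies in $L(X)$. For the reverse inclusion, I would take $f\in K(\!(X)\!)\cap L(X)$ and write $f=P/Q$ with $P,Q\in L[X]$ and $Q\neq 0$; clearing denominators, I have $f\,Q=P$ as an identity in $L(\!(X)\!)$. The idea is to view $L$ as a $K$-vector space and choose a basis $(e_\iota)_\iota$ of $L$ over $K$ with $1$ among the basis vectors. Writing $P$, $Q$ and the Laurent series $f$ in terms of this basis, the coefficient of each $e_\iota$ on both sides of $f\,Q=P$ gives an identity, and since $f\in K(\!(X)\!)$ has all its coefficients in $K$, comparing the $e_{\iota}$-components for a fixed $\iota$ with $e_\iota=1$ while treating $Q$ as fixed shows that $f$ is forced to agree with a rational function built from the $K$-components of $P$ and $Q$. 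More cleanly: fix a nonzero $K$-linear projection $\pi\colon L\to K$, extend it coefficientwise to $L(\!(X)\!)\to K(\!(X)\!)$, apply it to $f\,Q=P$, and since $\pi$ fixes $f$ (its coefficients already lie in $K$) one obtains $f=\pi(P)/\pi(Q)$ whenever $\pi(Q)\neq 0$; choosing $\pi$ so that $\pi(Q)\neq 0$ (possible because $Q\neq 0$) exhibits $f\in K(X)$.

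\medskip

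For part (ii), again $K(X^\ell)\subseteq K(X)\cap K(\!(X^\ell)\!)$ is immediate. For the reverse inclusion I would substitute $Y=X^\ell$ and reduce to showing that an element of $K(X)$ which, as a Laurent series in $X$, involves only powers of $X$ that are multiples of $\ell$, must already be a rational function in $X^\ell$. Concretely, take $f\in K(X)\cap K(\!(X^\ell)\!)$ and write $f=P(X)/Q(X)$ in lowest terms with $P,Q\in K[X]$. The hypothesis that the Laurent expansion of $f$ lies in $K(\!(X^\ell)\!)$ means $f$ is invariant under the substitution $X\mapsto\zeta X$ for every $\ell$-th root of unity $\zeta$; but over a field of characteristic $\ell$ there are no nontrivial $\ell$-th roots of unity, so this Galois-style argument degenerates and must be replaced. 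Instead I would argue directly on the formal derivative: if the only exponents appearing in the Laurent series of $f$ are divisible by $\ell$, then the formal derivative $f'$ vanishes, because every surviving monomial $X^{\ell m}$ has derivative $\ell m\,X^{\ell m-1}=0$ in characteristic $\ell$. A rational function over a field of characteristic $\ell$ with $f'=0$ is precisely a rational function in $X^\ell$ — this is the standard description of the kernel of $d/dX$ on $K(X)$ — which gives $f\in K(X^\ell)$ and completes the inclusion.

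\medskip

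The main obstacle I anticipate is part (i): the coefficientwise bookkeeping between a possibly infinite $K$-basis of $L$ and the Laurent-series coefficients must be handled so that only finitely many basis vectors intervene in $P$ and $Q$, which is what makes the projection trick legitimate and the division $\pi(P)/\pi(Q)$ well defined. Once the projection $\pi$ with $\pi(Q)\neq 0$ is secured, both parts close by elementary comparison, so I expect the total length to be short.
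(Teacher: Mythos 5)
Your proposal is correct, and the two parts have different relationships to the paper's proof. For part (ii) you take essentially the paper's route: the hypothesis $f\in K(\!(X^\ell)\!)$ forces the formal derivative of the Laurent expansion to vanish, hence $d f/dX=0$ as a rational function, and one concludes via the description of $\ker(d/dX)$ on $K(X)$ as $K(X^\ell)$. The only difference is that you quote this kernel description as standard, whereas the paper proves it inline: writing $f=P/Q$ with $P,Q$ coprime (exactly the reduction you set up and then did not use), the relation $d(P)Q-P\,d(Q)=0$ forces $P\mid d(P)$, so $d(P)=d(Q)=0$ for degree reasons, and $\ker d\cap K[X]=K[X^\ell]$ finishes. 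If a self-contained write-up is wanted, you should restore that two-line argument, but invoking the standard fact is not a gap.

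For part (i) you genuinely diverge from the paper, to your advantage in terms of self-containedness. The paper disposes of (i) by citation, reducing to the power-series analogue $K[[X]]\cap L(X)=K[[X]]\cap K(X)$ and referring to the literature (Bourbaki et al.). You instead give a complete direct argument: from $fQ=P$ in $L(\!(X)\!)$, pick a $K$-linear functional $\pi\colon L\to K$ with $\pi(q_{k_0})\neq 0$ for some nonzero coefficient $q_{k_0}$ of $Q$, extend $\pi$ coefficientwise, and use that $\pi(fQ)=f\,\pi(Q)$ because every coefficient of $fQ$ is a finite sum $\sum_j f_j q_{n-j}$ with $f_j\in K$, so $K$-linearity applies term by term. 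This yields $f=\pi(P)/\pi(Q)\in K(X)$ and is airtight; the finiteness worry you flag at the end is in fact harmless precisely because $Q$ is a polynomial, so each coefficient comparison involves only finitely many terms. Your earlier basis-bookkeeping sketch is vague, but it is superseded by the clean projection argument, which is the one that should be kept. In short: the paper's (i) is shorter because it is a citation; yours makes the lemma fully self-contained and works directly at the level of Laurent series with no reduction step.
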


\begin{proof}
The proof of assertion (i)  is a direct consequence of the analogous property $K[[X]]\cap L(X)=K[[X]]\cap K(X)$ for power series, see \cite[\S 5.2]{YA} and \cite[\S 4, Exercice 1]{Bbki}. 

In assertion (ii) it is clear that $K(X^\ell)\subset K(X)\cap K(\!(X^\ell)\!)$.
For the reverse inclusion, let $F=\sum_{i\geqslant i_0} a_i X^{\ell i}$ be an element of $K(\!(X^\ell)\!)$ where $a_i\in K$ for all $i\geqslant i_0$ and suppose that $F\in K(X)$. 
There exist relatively prime polynomials $P,Q\in K[X]$ such that $F=PQ^{-1}$.
Denoting by $d$ the usual derivation $d/dX$ in $\kk(\!(X)\!)$ we have $d(F)=0$ because $F\in K(\!(X^\ell)\!)$, thus $d(P)Q-Pd(Q)=0$ in $K[X]$. This implies that $P$ divides $d(P)$ in $K[X]$, then $d(P)=0$ and $d(Q)=0$.
It is easy to check that $\ker d\cap K[X]=K[X^\ell]$.
We deduce that $P\in\kk[X^\ell]$ and $Q\in\kk[X^\ell]$ hence $F\in K(X^\ell)$.
\end{proof}

\begin{lemma}We suppose that $\alpha\notin\FF_\ell$.
\begin{itemize}
 \item[(i)] \label{kernel}
The kernel of the derivation $D_\alpha$ of $\kk(y,z)$ is equal to $\kk(y^\ell,z^\ell)$.
 \item[(ii)]\label{cent}
The centralizer of $x$ in ${K(\f{g}_\alpha)}$ is the commutative subfield $\kk(x,y^\ell,z^\ell)$.
In particular it has codimension $\ell^2$ in $K(\f g_\alpha)$.
\end{itemize}
\end{lemma}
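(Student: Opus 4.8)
The plan is to treat the two assertions in turn, using the series expansion technique of the zero-characteristic proposition for (i), and a carefully chosen right $E$-basis of $K(\f g_\alpha)$ for (ii).

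For (i), one inclusion is immediate: $D_\alpha(y^\ell)=\ell y^\ell=0$ and $D_\alpha(z^\ell)=\alpha\ell z^\ell=0$, so $\kk(y^\ell,z^\ell)\subseteq\ker D_\alpha$. For the converse I would expand an arbitrary $f\in\ker D_\alpha$ in $\kk(\!(y)\!)(\!(z)\!)$ exactly as in the proof of the earlier proposition, as $f=\sum_{i,j}\lambda_{ij}y^iz^j$. Applying $D_\alpha$ multiplies the monomial $y^iz^j$ by its eigenvalue $\bar\imath+\alpha\bar\jmath$, where $\bar\imath,\bar\jmath\in\FF_\ell$ are the residues of $i,j$ modulo $\ell$; since $\alpha\notin\FF_\ell$, the relation $\bar\imath+\alpha\bar\jmath=0$ forces $\bar\imath=\bar\jmath=0$, so the support of $f$ lies in $\ell\Z\times\ell\Z$ and in particular $\partial_y f=\partial_z f=0$. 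It then remains to descend from ``both partial derivatives vanish'' to membership in $\kk(y^\ell,z^\ell)$, and this is precisely where Lemma \ref{lemmecomm}(ii) is applied twice: first over the base field $\kk(y)$ in the variable $z$ to obtain $f\in\kk(y,z^\ell)$, then over the base field $\kk(z^\ell)$ in the variable $y$ to obtain $f\in\kk(y^\ell,z^\ell)$.

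For (ii), set $E=\kk(x,y^\ell,z^\ell)$. Since $xy^\ell=y^\ell x$ and $xz^\ell=z^\ell x$ while $y^\ell,z^\ell$ are central, $E$ is a commutative subfield with $E\subseteq Z_K(x)$, the centralizer of $x$. The key structural step I would prove is that $K(\f g_\alpha)$ is a right $E$-vector space with basis $\{y^iz^j:0\leqslant i,j\leqslant\ell-1\}$, so that $[K(\f g_\alpha):E]=\ell^2$. Spanning follows by checking that the right $E$-span $M$ of these monomials is a subring: the commutation rule $xy^iz^j=y^iz^j(x+i+j\alpha)$ shows that any $e\in E$ can be moved past $y^{i'}z^{j'}$ at the cost of the substitution $x\mapsto x+i'+j'\alpha$, which keeps it in $E$, while $y^\ell,z^\ell\in E$ allow one to reduce exponents modulo $\ell$; being a finite-dimensional domain over the subfield $E$, the ring $M$ is a division ring, and as it contains $x,y,z$ it equals $K(\f g_\alpha)$. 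Linear independence follows by clearing denominators into the commutative ring $\kk[x,y^\ell,z^\ell]$ and reading off the Poincar\'e--Birkhoff--Witt basis $\{y^bz^cx^a\}$ of $U(\f g_\alpha)$, the monomials $y^{i+\ell s}z^{j+\ell t}x^a$ being pairwise distinct for $0\leqslant i,j\leqslant\ell-1$.

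Granting this basis, the determination of $Z_K(x)$ is then a one-line computation: writing $u=\sum_{0\leqslant i,j\leqslant\ell-1}y^iz^je_{ij}$ with $e_{ij}\in E$ and using $xy^iz^j=y^iz^j(x+i+j\alpha)$ together with the commutativity of $E$, one finds $xu-ux=\sum_{i,j}(i+j\alpha)\,y^iz^je_{ij}$. By the basis property this vanishes if and only if $(i+j\alpha)e_{ij}=0$ for all $i,j$; since $\alpha\notin\FF_\ell$, the scalar $i+j\alpha$ is nonzero whenever $(i,j)\neq(0,0)$ in the admissible range, forcing $e_{ij}=0$ there and hence $u=e_{00}\in E$. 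This yields $Z_K(x)=E=\kk(x,y^\ell,z^\ell)$ together with the announced codimension $\ell^2$. I expect the main obstacle to be the structural step, namely proving that the $E$-span is already a division ring (closure under inverses) while being of dimension exactly $\ell^2$; the rest reduces to the short commutator computation above. As a consistency check, part (i) shows directly that the centralizer of $x$ inside the Ore extension $U'(\f g_\alpha)=\kk(y,z)[x;D_\alpha]$ equals $(\ker D_\alpha)[x]=\kk(y^\ell,z^\ell)[x]$, of which $E$ is precisely the field of fractions.
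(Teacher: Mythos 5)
Your proposal is correct; part (i) follows the paper's own method, while part (ii) replaces the paper's one-line citation by a self-contained argument, so it is worth comparing the two. For (i), the paper does essentially what you do: expand a constant of $D_\alpha$ in Laurent series, read off the eigenvalues $i+\alpha j$, use $\alpha\notin\FF_\ell$ to force the support into $\ell\Z\times\ell\Z$, and descend to rationality via Lemma \ref{lemmecomm}; the only difference is the bookkeeping of the descent. The paper expands first in $\kk(y)(\!(z)\!)$ and needs part (ii) of that lemma for each coefficient, then part (i) with $L=\kk(y)$, $K=\kk(y^\ell)$, then part (ii) again, whereas your symmetric route through $\partial_y f=\partial_z f=0$ uses only part (ii), once in each variable, and avoids part (i) altogether. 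One detail you should make explicit in your second application (base field $\kk(z^\ell)$, variable $y$): Lemma \ref{lemmecomm}(ii) as stated requires $f\in\kk(z^\ell)(\!(y^\ell)\!)$, which is not among the facts you have recorded; it does follow in one line from $\partial_y f=0$ by expanding $f$ in $\kk(z^\ell)(\!(y)\!)$ and noting that the coefficients of $y^i$ with $i$ prime to $\ell$ must vanish (alternatively, observe that the paper's proof of that lemma only ever uses $d(F)=0$). For (ii), the paper simply remarks that the assertion is a particular case of \cite[Theorem 5.8]{Goo} (the centralizer of $x$ in $\kk(y,z)(x\,;\,D_\alpha)$ is $F(x)$ for $F=\ker D_\alpha$), so in the paper assertion (ii) is an immediate corollary of assertion (i). Your argument is genuinely different and does not use (i) at all: you exhibit $\{y^iz^j\,:\,0\leqslant i,j\leqslant\ell-1\}$ as a right basis of $K(\f g_\alpha)$ over $E=\kk(x,y^\ell,z^\ell)$ — spanning via the conjugation rule $xy^iz^j=y^iz^j(x+i+j\alpha)$ together with the fact that a domain of finite right dimension over a division subring is a division ring, independence via PBW after clearing denominators in the commutative ring $\kk[x,y^\ell,z^\ell]$ — and then a short commutator computation identifies the centralizer and yields the codimension $\ell^2$ at the same time. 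What you lose in brevity you gain in self-containedness: your basis is exactly the content of Goodearl's theorem in this special case, and your dimension count is in the same spirit as the ones the paper itself runs in the proofs of Theorems \ref{GKl} and \ref{GKlbis}. Both proofs are valid.
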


\begin{proof}
It is clear that $\kk(y^\ell,z^\ell)\subset\ker{D_\alpha}$.
To prove the reverse inclusion we set $f$ an element in $\kk(y,z)$ such that $D_\alpha(f)=0$.
Embedding $\kk(y,z)$ in $\kk(y)(\!(z)\!)$, 
we introduce its expansion $f=\sum_{i\geqslant v}a_iz^i$ with $v\in\Z$ and $a_i\in\kk(y)$ for any $i\geqslant v$.
Then $D_\alpha(f)=\sum_{i\geqslant v}\left(D_\alpha(a_i)+i\alpha a_i\right)z^i$.
Thus we have $D_\alpha(a_i)+i\alpha a_i=0$ for any $i\geqslant v$.
Considering now the embedding of $\kk(y)$ in $\kk(\!(y)\!)$, we expand 
$a_i=\sum_{k\geqslant w_i} \beta_{i,k} y^k$ with $w_i\in\Z$ and $\beta_{i,k}\in\kk$ for any $k\geqslant w_i$.
We compute $D_\alpha(a_i)+i\alpha a_i=\sum_{k\geqslant w_i}(k+i\alpha)\beta_{i,k}y^k$ and deduce that $k+i\alpha=0$ for all $i,k$ such that $\beta_{i,k}\neq0$.
Since $\alpha\notin\FF_\ell$, this implies that $\ell$  divides both~$i$ and $k$ for all $i,k$ such that $\beta_{i,k}\neq0$.
Therefore $a_i\in\kk(\!(y^\ell)\!)\cap \kk(y)=\kk(y^\ell)$ for any $i\geqslant v$ thanks to assertion (ii) of Lemma \ref{intersection}. We conclude that $f$ is an element of $\kk(y^\ell)(\!(z^\ell)\!)$.
Let us denote $K=\kk(y^\ell)$. We have $f\in K(\!(z^\ell)\!)\subset K(\!(z)\!)$. 
Recalling that $f\in\kk(y,z)=\kk(y)(z)$, assertion (i) of Lemma \ref{Bbkibis} with $L=\kk(y)$ implies that $f\in K(\!(z)\!)\cap \kk(y)(z)=K(z)$. It follows again from Lemma \ref{intersection} that $f\in K(\!(z^\ell)\!)\cap K(z)=K(z^\ell)=\kk(y^\ell,z^\ell)$ which ends the proof of assertion (i).

Assertion (ii) is then a particular case of \cite[Theorem 5.8]{Goo}.
\end{proof}

\begin{theorem}\label{GKlbis}We suppose that $\alpha\notin\FF_\ell$.
\begin{itemize}
 \item[(i)] \label{zell}
The center $C(\f g_\alpha)$ of $K(\f g_\alpha)$ is equal to $\kk(y^\ell,z^\ell,c)$.
In particular, $K(\f {g}_\alpha)$ is of dimension $\ell^4$ over its center.
 \item[(ii)]\label{Da}
The Lie algebra $\f{g}_\alpha$ does not satisfy the Gelfand-Kirillov property.
\end{itemize}
\end{theorem}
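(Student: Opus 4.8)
The plan is to identify $C(\f g_\alpha)$ with the subfield of the centralizer of $x$ consisting of the elements that commute also with $y$ and $z$, and then to read the dimension over the center off a tower of field extensions. The easy inclusion $\kk(y^\ell,z^\ell,c)\subseteq C(\f g_\alpha)$ comes for free: we have $y^\ell,z^\ell\in\ker D_\alpha\subseteq C(\f g_\alpha)$ by \eqref{UgOre2} and \eqref{UgK} (as already used in the proof of Theorem \ref{GKl}), while $c$ is central by Lemma \ref{ccentral}.

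For the reverse inclusion I would use that $C(\f g_\alpha)$ is contained in the centralizer of $x$, which by Lemma \ref{cent} is the commutative field $L=\kk(x,y^\ell,z^\ell)$. Setting $F=\kk(y^\ell,z^\ell)$, so that $L=F(x)$ is rational in $x$ over $F$, the relations $yx=(x-1)y$ and $zx=(x-\alpha)z$ show that conjugation by $y$ and by $z$ act on $L$ as the $F$-automorphisms $\sigma_1$ and $\sigma_\alpha$ with $\sigma_1(x)=x-1$ and $\sigma_\alpha(x)=x-\alpha$ (both fix $F$, since $y$ and $z$ commute with $y^\ell$ and $z^\ell$). Hence an element of $L$ is central if and only if it is fixed by the group $G=\langle\sigma_1,\sigma_\alpha\rangle$. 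This is where the hypothesis $\alpha\notin\FF_\ell$ enters decisively: $1$ and $\alpha$ are linearly independent over $\FF_\ell$, so $G=\{x\mapsto x-(a+b\alpha):a,b\in\FF_\ell\}$ has order exactly $\ell^2$.

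The invariant field $L^G$ is then computed by the modular invariant theory of the Remark, carried out over the base field $F$ in place of $\kk$: first $L^{\sigma_1}=F(t_1)$ with $t_1=x^\ell-x$, and then $F(t_1)^{\sigma_\alpha}=F(c)$, since $\sigma_\alpha(t_1)=t_1-(\alpha^\ell-\alpha)$ and $c=t_1^\ell-(\alpha^\ell-\alpha)^{\ell-1}t_1$. As $G$ is generated by the two commuting automorphisms $\sigma_1,\sigma_\alpha$, this yields $C(\f g_\alpha)=L^G=(L^{\sigma_1})^{\sigma_\alpha}=F(c)=\kk(y^\ell,z^\ell,c)$, which is the equality of (i); moreover $[L:C(\f g_\alpha)]=[F(x):F(t_1)]\,[F(t_1):F(c)]=\ell\cdot\ell=\ell^2$. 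Feeding this into the tower $C(\f g_\alpha)\subseteq L\subseteq K(\f g_\alpha)$ together with $\dim_L K(\f g_\alpha)=\ell^2$ (the codimension of the centralizer in Lemma \ref{cent}) gives dimension $\ell^2\cdot\ell^2=\ell^4$ over the center, completing (i). Part (ii) is then immediate: $\mathcal D_{1,1}(\kk)$ has dimension $\ell^2$ over its center in characteristic $\ell$ (recorded in Theorem \ref{GKl}), and the dimension of a skewfield over its center is preserved under $\kk$-isomorphism, so $K(\f g_\alpha)$, being of dimension $\ell^4$, cannot be $\kk$-isomorphic to $\mathcal D_{1,1}(\kk)$; hence $\f g_\alpha$ does not satisfy the Gelfand-Kirillov property.

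The main obstacle is the heart of the third paragraph: verifying that the center is exactly the $G$-invariant subfield of the centralizer $F(x)$ and that $|G|=\ell^2$ (the one place where $\alpha\notin\FF_\ell$ is genuinely used), together with checking that the invariant computation of the Remark transports verbatim to the larger base field $F$. The remaining dimension bookkeeping is routine.
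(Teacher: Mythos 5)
Your proposal is correct, but it takes a genuinely different route from the paper's own proof. The paper argues indirectly by dimension counting: after the easy inclusion $\kk(y^\ell,z^\ell,c)\subseteq C(\f g_\alpha)$ (Lemma \ref{ccentral}), it notes that $[K(\f g_\alpha):C(\f g_\alpha)]$ must be a perfect square dividing $[K(\f g_\alpha):\kk(y^\ell,z^\ell,c)]=\ell^4$, and that the strict inclusions $C(\f g_\alpha)\subsetneq \mathcal C(x)$ together with $[K(\f g_\alpha):\mathcal C(x)]=\ell^2$ (Lemma \ref{cent}) rule out everything but $\ell^4$, which forces the center to equal $\kk(y^\ell,z^\ell,c)$. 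You instead compute the center directly: $C(\f g_\alpha)=\mathcal C(x)\cap\mathcal C(y)\cap\mathcal C(z)$, conjugation by $y$ and $z$ acts on $\mathcal C(x)=F(x)$ (with $F=\kk(y^\ell,z^\ell)$) as the translation group $G=\{x\mapsto x-(a+b\alpha):a,b\in\FF_\ell\}$ of order $\ell^2$, and two applications of the modular-invariant computation $F(x)^{\sigma_\gamma}=F(x^\ell-\gamma^{\ell-1}x)$ give $F(x)^G=F(c)$; the dimension $\ell^4$ then falls out of the tower $C\subseteq\mathcal C(x)\subseteq K(\f g_\alpha)$ rather than being the engine of the proof. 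In effect you have promoted the paper's Remark following Lemma \ref{ccentral} (which performs this invariant-theory calculation over $\kk$ only as a motivation for $c$) into the actual proof over the base $F$. What the paper's argument buys is brevity, at the cost of invoking the theorem that a finite-dimensional division algebra has square dimension over its center; what yours buys is a constructive identification of the center that explains where $c$ comes from, avoids the square-dimension theorem, and yields as a byproduct the Galois structure of $\mathcal C(x)$ over $C(\f g_\alpha)$ as an elementary abelian extension of degree $\ell^2$. Both arguments rest on the same two pillars, Lemma \ref{ccentral} and Lemma \ref{cent}, and your part (ii) is identical to the paper's.
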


\begin{proof}
By Lemma \ref{ccentral}, $C_\ell=\kk(y^\ell,z^\ell,c)$ is a subfield of $C(\f {g}_\alpha)$.
Hence the dimension of $K(\f g_\alpha)$ over $C(\f {g}_\alpha)$ is a square that divides $\ell^4$.
Moreover $C(\f {g}_\alpha)$ is contained in the centralizer of $x$ in $K(\f g_\alpha)$ which is of codimension $\ell^2$ in $K(\f g_\alpha)$ thanks to Lemma \ref{cent}, and the inclusion is strict because $x\notin C(\f {g}_\alpha)$. 
Thus the only possibility is that $K(\f g_\alpha)$ is of dimension $\ell^4$ over $C(\f {g}_\alpha)=C_\ell$.
Assertion (ii) follows since the dimension of a Weyl skewfield $\mathcal{D}_{1,1}(\kk)$ over its center is $\ell^2$ (see for instance \cite[Proposition 1.1.3]{Bois}).\end{proof}

The following proposition describes the structure of $K(\f g_\alpha)$ in relation to Weyl skewfields.

\begin{proposition}
\label{struc1}
We suppose that $\alpha\notin\FF_\ell$.
Let $L$ be the skewfield generated by $z$, $y^\ell$ and $x^\ell-x$ in $K(\f g_\alpha)$.
\begin{itemize}
 \item[(i)] The center of $L$ is $C(\f g_\alpha)$.
 \item[(ii)] The centralizer of $L$ in $K(\f{g}_\alpha)$ is the skewfield $L'$ generated by $y$, $z^\ell$ and $x^\ell-\alpha^{\ell-1}x$.
 \item[(iii)] $L$ and $L'$ are both $\kk$-isomorphic to a Weyl skewfield $\mathcal D_{1,1}(\kk)$.
 \item[(iv)]\label{thmDCT}
$K(\f {g}_\alpha)$ is $C(\f g_\alpha)$-isomorphic to the tensor product of $L$ and $L'$ over $C(\f g_\alpha)$.
\end{itemize}
\end{proposition}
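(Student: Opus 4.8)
The plan is to treat the four assertions together through the structure theory of finite-dimensional central division algebras over $C:=C(\f g_\alpha)$, the only real input being the commutation relations satisfied by the chosen generators and the fact, proved in Theorem \ref{GKlbis}, that $K(\f g_\alpha)$ is a central division algebra of dimension $\ell^4$ over $C=\kk(y^\ell,z^\ell,c)$.

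First I would record the relations inside $L$ and $L'$. Writing $X=x^\ell-x$ and $X'=x^\ell-\alpha^{\ell-1}x$, relations \eqref{relx} give $Xz=z(X+(\alpha^\ell-\alpha))$ and $X'y=y(X'+(1-\alpha^{\ell-1}))$, while $X$ commutes with $y$ and $X'$ commutes with $z$ by \eqref{deg l}. Setting $\beta=\alpha^\ell-\alpha$ and $\beta'=1-\alpha^{\ell-1}$, both nonzero since $\alpha\notin\FF_\ell$, this reads $[X,z]=\beta z$ and $[X',y]=\beta'y$; moreover $y^\ell$ is central in $L$ and $z^\ell$ is central in $L'$, and $c=X^\ell-\beta^{\ell-1}X$ shows $c\in L$. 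To see $c\in L'$ as well I would reuse the invariant-theoretic computation of the Remark: both $c$ and $(X')^\ell-(\beta')^{\ell-1}X'$ equal the product $\prod_{i,j\in\FF_\ell}(x-j-i\alpha)$ over the orbit of $x$ under $\langle\sigma_1,\sigma_\alpha\rangle$, hence they coincide and $c\in\kk[X']\subset L'$. Thus $C\subseteq L$ and $C\subseteq L'$.

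Next I would prove (iii). In $L$ the elements $q=z$ and $p=\beta^{-1}Xz^{-1}$ satisfy $[p,q]=1$, exactly as in Corollary \ref{isoaq}, and $y^\ell$ is central and commutes with $p,q$; so $L$ is the skewfield generated over $\kk$ by $p,q,y^\ell$ with $[p,q]=1$ and $y^\ell$ central, which identifies it with $\mathcal D_{1,1}(\kk)$ provided $y^\ell$ is a genuine central transcendental and no extra relation occurs. I expect this to be the \emph{main obstacle}, since in characteristic $\ell$ the Weyl algebra is not simple, so I must pin down the center of $L$ precisely. This is achieved by the standard identity $(pq)^\ell-pq=p^\ell q^\ell$: with $X=\beta pq$ it gives $c=\beta^\ell p^\ell q^\ell=\beta^\ell p^\ell z^\ell$, so $p^\ell\in C$ and $C=\kk(y^\ell,z^\ell,c)=\kk(y^\ell)(p^\ell,q^\ell)$, the three elements being algebraically independent over $\kk$. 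The induced map $A_1(\kk(y^\ell))\to L$ is then injective, because its restriction to the center $\kk(y^\ell)[p^\ell,q^\ell]$ is, and passing to fractions yields $L\cong\mathcal D_{1,1}(\kk)$. The symmetric substitution $q'=y$, $p'=(\beta')^{-1}X'y^{-1}$ with central transcendental $z^\ell$ gives $L'\cong\mathcal D_{1,1}(\kk)$.

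Finally I would deduce (i), (ii) and (iv) from the theory of central simple $C$-algebras. From (iii) we get $Z(L)=C$ and $[L:C]=\ell^2$, which is (i), and likewise $Z(L')=C$, $[L':C]=\ell^2$. The relations above show that every generator of $L'$ commutes with every generator of $L$, so $L'\subseteq C_{K(\f g_\alpha)}(L)$. Since $L$ is central simple over $C$, the double centralizer theorem gives $[C_{K(\f g_\alpha)}(L):C]=[K(\f g_\alpha):C]/[L:C]=\ell^4/\ell^2=\ell^2=[L':C]$, whence $L'=C_{K(\f g_\alpha)}(L)$, proving (ii). The same theorem yields $K(\f g_\alpha)\cong L\otimes_C C_{K(\f g_\alpha)}(L)=L\otimes_C L'$, which is (iv); equivalently the multiplication map $L\otimes_C L'\to K(\f g_\alpha)$ is a homomorphism of central simple $C$-algebras of equal dimension $\ell^4$, injective by simplicity of the source, hence an isomorphism.
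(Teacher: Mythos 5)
Your proof is correct, and it shares the paper's skeleton---the same Weyl-type generators $q=z$, $p=(\alpha^\ell-\alpha)^{-1}(x^\ell-x)z^{-1}$, and the Double Centralizer Theorem for (ii) and (iv)---but it handles (i) and (iii) by a genuinely different route. The paper proves (i) first and directly: it computes the centralizers of $z$ and of $x^\ell-x$ inside $L$ by degree arguments, intersects them to land inside $\kk(c,y^\ell,z^\ell)=C(\f g_\alpha)$, and then simply asserts (iii) from the relations $[p,q]=1$, $[p,y^\ell]=[q,y^\ell]=0$. You instead prove (iii) first in a strengthened, explicit form and deduce (i) from it, using two identities absent from the paper's proof: $(pq)^\ell-pq=p^\ell q^\ell$, which gives $c=(\alpha^\ell-\alpha)^\ell p^\ell z^\ell$ and hence $p^\ell\in C(\f g_\alpha)$; and $c=(X')^\ell-(\beta')^{\ell-1}X'$ for $X'=x^\ell-\alpha^{\ell-1}x$ and $\beta'=1-\alpha^{\ell-1}$, which puts $c$ in $L'$ and identifies the center of $L'$. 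Both identities are correct (for the second, both sides equal $\prod_{i,j\in\FF_\ell}(x-j-i\alpha)$ in the commutative ring $\kk[x]$). What your route buys: it makes rigorous a point the paper leaves implicit---in characteristic $\ell$ the Weyl algebra is not simple, so exhibiting generators satisfying the Weyl relations with $y^\ell$ central does not by itself force $L\cong\mathcal D_{1,1}(\kk)$; one must exclude algebraic relations among $p^\ell,q^\ell,y^\ell$, which is exactly what your independence argument does. What the paper's route buys: (i) is obtained cheaply and independently of any presentation of $L$, and the center of $L'$ is never needed, since the dimension $\ell^2$ of the centralizer of $L$ over $C(\f g_\alpha)$ is read off from the Double Centralizer Theorem rather than from a Weyl presentation of $L'$. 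Two of your steps deserve explicit justification: the claim that injectivity of $A_1(\kk(y^\ell))\to L$ follows from injectivity on the center uses that every nonzero two-sided ideal of the Weyl algebra over a field of characteristic $\ell$ meets its center nontrivially (true, because that algebra is Azumaya over its center, or by Rowen's theorem on prime PI rings); and the algebraic independence of $y^\ell,z^\ell,c$ over $\kk$ follows because $\kk(y^\ell,z^\ell,x)$ is a commutative subfield of $K(\f g_\alpha)$ whose three generators are algebraically independent while $c\in\kk[x]$ is nonconstant.
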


\begin{proof}
(i) Observe that $\kk(c,y^\ell,z)$ is contained in the centralizer $\mathcal{C}_L(z)$ of $z$ in $L$. 
Since $x^\ell-x$ does not commute with $z$, $\mathcal{C}_L(z)\neq L$ and by a degree argument we obtain $\mathcal{C}_L(z)=\kk(c,y^\ell,z)$.
Similarly we show that the centralizer  of $x^\ell-x$ in $L$ is $\mathcal{C}_L(x^\ell-x)=\kk(x^\ell-x,y^\ell,z^\ell)$. 
We deduce that the center $C(L)$ of $L$ satisfies $C(L)\subset \mathcal{C}_L(z)\cap \mathcal{C}_L(x^\ell-x)=\kk(c,y^\ell,z^\ell)$ by Lemma \ref{ccentral}. 
Then $C(L)\subset C(\f g_\alpha)$ by Theorem \ref{zell}. Since $C(\f g_\alpha)\subset L$ , we conclude that $C(L)=C(\f g_\alpha)$.

(ii) Denote by $L'$ the centralizer of $L$ in $K(\f g_\alpha)$. 
Since $L$ is a simple subalgebra of $K(\f g_\alpha)$ over $C(\f g_\alpha)$ of dimension $\ell^2$, it follows from the Double Centralizer Theorem \cite[Theorem 12.7, assertion (ii)]{Pierce} 
that $L'$ is also of dimension $\ell^2$ over $C(\f g_\alpha)$. The skewfield generated in $K(\f g_\alpha)$ by $y$, $z^\ell$ and $x^\ell-\alpha^{\ell-1}x$ being clearly contained in $L'$, equality again follows by a degree argument.

(iii)
In $L$ we have $[x^\ell-x,z]=(\alpha^\ell-\alpha)z\neq 0$ and $y^\ell$ is central.
By setting $t'=(x^\ell-x)(\alpha^\ell-\alpha)^{-1}z^{-1}$ we obtain three generators $t',z$, $y^\ell$ of $L$ satisfying the relations $[t',z]=1$ and $[t',y^\ell]=[z,y^\ell]=0$.
Thus $L$ is $\kk$-isomorphic to the Weyl skewfield $\mathcal{D}_{1,1}(\kk)$.
We obtain similarly that $L'$ is $\kk$-isomorphic to $\mathcal{D}_{1,1}(\kk)$ since $[x^\ell-\alpha^{\ell-1}x,y]=(1-\alpha^{\ell-1})y\neq 0$.

(iv) It is a direct application of the Double Centralizer Theorem, see for instance \cite[Theorem 12.7, assertion (iv)]{Pierce}. 
\end{proof}

\begin{corollary}\label{brauer}
The class $[K(\f g_\alpha)]$ of $K(\f g_\alpha)$ in the Brauer group of $C(\f g_\alpha)$ is of order $\ell$.
\end{corollary}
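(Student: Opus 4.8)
The plan is to read the class of $K(\f g_\alpha)$ directly off the tensor decomposition of Proposition \ref{struc1} and then pin down its order by combining the index--exponent inequality for central simple algebras with the fact that $K(\f g_\alpha)$ is itself a skewfield. Throughout I set $C=C(\f g_\alpha)$ and use additive notation for the abelian group $\mathrm{Br}(C)$. First I would invoke assertion (iv) of Proposition \ref{struc1}, which gives a $C$-algebra isomorphism $K(\f g_\alpha)\cong L\otimes_C L'$. Since the class of a tensor product of central simple $C$-algebras is the sum of the classes, this yields at once $[K(\f g_\alpha)]=[L]+[L']$ in $\mathrm{Br}(C)$.

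Next I would bound the order of each summand. By assertions (i)--(iii) of Proposition \ref{struc1}, both $L$ and $L'$ are central division algebras over $C$ of dimension $\ell^2$, hence each has index $\ell$. As the exponent of a central simple algebra always divides its index, the order of each of $[L]$ and $[L']$ divides $\ell$; moreover $L$ and $L'$ are skewfields of dimension $\ell^2>1$ over $C$, so they are not isomorphic to $C$ and their classes are non trivial. Since $\ell$ is prime, each of $[L]$ and $[L']$ has order exactly $\ell$, and in particular $\ell[L]=\ell[L']=0$. It follows that $\ell\,[K(\f g_\alpha)]=\ell[L]+\ell[L']=0$, so the order of $[K(\f g_\alpha)]$ divides the prime $\ell$.

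It remains only to exclude the trivial class. Here $K(\f g_\alpha)$ is a skewfield of dimension $\ell^4>1$ over its center $C$, so it is not a matrix algebra over $C$ and its class in $\mathrm{Br}(C)$ is non zero. An order dividing $\ell$ that is different from $1$ must equal $\ell$, which gives the statement. No genuine obstacle arises: the whole argument rests on Proposition \ref{struc1}, and the one external input requiring care is the standard theorem that the exponent divides the index, which is precisely what prevents the order of each Weyl factor (and hence of $[K(\f g_\alpha)]$) from being as large as $\ell^2$.
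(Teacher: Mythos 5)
Your proof is correct and takes essentially the same route as the paper's: both read $[K(\f g_\alpha)]=[L][L']$ off the tensor decomposition of Proposition \ref{struc1}(iv) and bound the order of each Weyl factor by $\ell$ via the exponent-divides-index theorem (the paper cites it as ``the order divides $\sqrt{\ell^2}$''). The only difference is that you make explicit the final nontriviality step --- that $[K(\f g_\alpha)]\neq 0$ because $K(\f g_\alpha)$ is a skewfield of dimension $\ell^4>1$ over $C(\f g_\alpha)$ --- which the paper leaves implicit when passing from $[K(\f g_\alpha)]=[L][L']$ to $o([K(\f g_\alpha)])=\ell$; this is a genuine (if small) improvement in rigor, since a product of two classes of order $\ell$ could a priori be trivial.
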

\begin{proof}
Both $L$ and $L'$ are of dimension $\ell^2$ over $C(\f g_\alpha)$.
The order $o([L])$ of $[L]$ in the Brauer group of $C(\f g_\alpha)$ divides $\sqrt{\ell^2}=\ell$
(see for instance \cite[Theorem 4.4.5]{H}).
Since $[L]$ is not trivial, hence $o([L])=\ell$.
Similarly $o([L'])=\ell$.
Moreover we have $[K(\f g_\alpha)]=[L][L']$ by the previous proposition.
Therefore $o([K(\f g_\alpha)])=\ell$.
\end{proof}

\subsection{Rational isomorphisms and embeddings}\label{plongements}
We suppose in this section that $\kk$ is an arbitrary field.  
It follows from Corollary \ref{isoaq} and Theorem \ref{GKl} that $K(\f g_\alpha)$ and~$K(\f g_\beta)$ are isomorphic 
if $\alpha$ and $\beta$ lie in the prime subfield $\kk_0$ of $\kk$.
Moreover $K(\f g_\alpha)$ and $K(\f g_\beta)$ are not isomorphic when $\alpha\in\kk_0$ and $\beta\notin\kk_0$ by Theorem \ref{GKlbis}.
Hence we focus here on the situation where $\alpha\notin\kk_0$ and $\beta\notin\kk_0$.
We use the notation $x,y,z$ for generators of $K(\f g_\alpha)$ as in \eqref{UgK}, and similarly $x',y',z'$ for generators of $K(\f g_\beta)$.

\begin{lemma}
\label{embedmonomial}
Let $\alpha\in\kk\setminus\kk_0$.
Let $M=\left(\begin{smallmatrix}n &q\\m & r\end{smallmatrix} \right)\in
M_2(\Z)$ be a matrix whose determinant is nonzero in $\kk$ and set
$\beta=\frac{n \alpha +q}{m \alpha +r}$.
Then there exists an injective $\kk$-algebra morphism $\varphi :
K(\f {g}_{\beta})\rightarrow K(\f {g}_{\alpha})$ defined by
\begin{align*}
\varphi(x')=\textstyle\frac{1}{m\alpha + r} x,\quad
\varphi(y')=y^{r}z^{m}\quad\mbox{and}\quad
\varphi(z')=y^{q}z^{n}.\end{align*}
Moreover if $M$ is invertible over $\Z$ then $\varphi$ is an isomorphism.
\end{lemma}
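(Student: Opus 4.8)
The plan is to build $\varphi$ first on the enveloping algebra $U(\f g_\beta)$, then extend it to the skewfield and prove injectivity, and finally obtain surjectivity when $M$ is invertible over $\Z$.

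First I would check that the images are well-defined and satisfy the defining relations of $\f g_\beta$. Since $\alpha\notin\kk_0$ while $m,r\in\kk_0$, the equality $m\alpha+r=0$ would force $m=r=0$ in $\kk$; but then the bottom row of $M$ vanishes modulo the characteristic and $\det M=0$ in $\kk$, against the hypothesis. Hence $m\alpha+r\neq0$, so $\varphi(x')=\frac{1}{m\alpha+r}x$ makes sense, and the same argument gives $n\alpha+q\neq0$, so that $\beta\in\kk^\times$ is well-defined. Writing $\xi=\varphi(x')$, $\eta=\varphi(y')=y^rz^m$ and $\zeta=\varphi(z')=y^qz^n$, I would use \eqref{UgOre2} together with the elementary computation $D_\alpha(y^az^b)=(a+\alpha b)y^az^b$ to get $[x,\eta]=(m\alpha+r)\eta$ and $[x,\zeta]=(n\alpha+q)\zeta$. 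This yields $[\xi,\eta]=\eta$, $[\xi,\zeta]=\beta\zeta$ and $[\eta,\zeta]=0$ (the last because $\eta,\zeta$ lie in the commutative field $\kk(y,z)$), which are exactly the brackets of $\f g_\beta$. By the universal property of the enveloping algebra, $\varphi$ extends to a $\kk$-algebra morphism $U(\f g_\beta)\to K(\f g_\alpha)$.

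For injectivity I would use the Ore normal form. An element of $U(\f g_\beta)=\kk[y',z'][x';D_\beta]$ reads $\sum_i g_i(y',z')(x')^i$ with $g_i\in\kk[y',z']$, and its image is $\sum_i(m\alpha+r)^{-i}g_i(\eta,\zeta)x^i$. As each $g_i(\eta,\zeta)$ lies in $\kk(y,z)$ and the powers of $x$ form a left $\kk(y,z)$-basis of $\kk(y,z)[x;D_\alpha]$, uniqueness of this expansion shows the image vanishes only if every $g_i(\eta,\zeta)=0$. Thus injectivity reduces to the algebraic independence of $\eta$ and $\zeta$ over $\kk$: a relation $\sum c_{ij}y^{ri+qj}z^{mi+nj}=0$ forces all $c_{ij}=0$ because the exponent map $(i,j)\mapsto(ri+qj,\,mi+nj)$ has determinant $\det M$, which is nonzero as an integer (being even nonzero in $\kk$), hence is injective and produces no collision of monomials. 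Since $\varphi$ is an injective morphism from an Ore domain into the skewfield $K(\f g_\alpha)$, it sends nonzero elements to units and therefore extends uniquely to an injective morphism $K(\f g_\beta)\to K(\f g_\alpha)$.

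Finally, when $M\in\GL_2(\Z)$ I would prove surjectivity. The image is a sub-skewfield containing $\eta,\zeta$ and their inverses, hence all monomials $\eta^a\zeta^b=y^{ra+qb}z^{ma+nb}$; since $\left(\begin{smallmatrix}r&q\\m&n\end{smallmatrix}\right)$ has determinant $\det M=\pm1$ it is invertible over $\Z$, so suitable exponents $a,b$ realize $y$ and $z$, giving $\kk(y,z)$ inside the image. As $x=(m\alpha+r)\varphi(x')$ lies in the image too, the image contains $x,y,z$ and thus all of $K(\f g_\alpha)$, so $\varphi$ is onto. (Equivalently, applying the embedding already constructed to $\beta$ and $M^{-1}$ yields a two-sided inverse.) The computations are routine once the Ore normal form is invoked; the point genuinely deserving care, and the crux of the argument, is the reduction to algebraic independence and the correct reading of the hypothesis: it is precisely $\det M\neq0$ \emph{in} $\kk$ that rules out $m\alpha+r=0$ and $n\alpha+q=0$ in positive characteristic, ensuring $\varphi(x')$ and $\beta$ are defined, while the weaker integer nonvanishing it implies is all that the independence step needs.
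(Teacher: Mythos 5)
Your proof is correct, and its skeleton matches the paper's: check the brackets of $\f g_\beta$ on the proposed images, invoke the universal properties of $U(\f g_\beta)$ and of Ore localization, reduce injectivity to the algebraic independence of $y^rz^m$ and $y^qz^n$ over $\kk$, and invert $\varphi$ when $M\in\GL_2(\Z)$. Where you genuinely diverge is at the step you yourself call the crux, the independence of the two monomials. The paper gets it from the Jacobian criterion, citing its positive-characteristic adaptation \cite{MSS}; since the Jacobian of $(y^rz^m,y^qz^n)$ equals $(\det M)\,y^{r+q-1}z^{m+n-1}$, that route uses the full hypothesis $\det M\neq0$ \emph{in} $\kk$. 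You instead argue through the exponent lattice: the linear map $(i,j)\mapsto(ri+qj,\,mi+nj)$ on $\Z^2$ is injective as soon as $\det M\neq0$ in $\Z$, so the monomials $y^{ri+qj}z^{mi+nj}$ occurring in a polynomial relation are pairwise distinct Laurent monomials, hence $\kk$-linearly independent. This is more elementary (no external citation, no separate treatment of positive characteristic) and it cleanly separates the roles of the hypotheses: nonvanishing of $\det M$ in $\kk$ is exactly what rules out $m\alpha+r=0$ and $n\alpha+q=0$, while independence needs only the weaker integer nonvanishing. Your surjectivity argument is likewise a touch more explicit than the paper's one-line ``define $\varphi^{-1}$ similarly'': you exhibit $y$, $z$ and $x$ inside the image, which is a division subring of $K(\f g_\alpha)$ containing $U(\f g_\alpha)$ and therefore equals $K(\f g_\alpha)$. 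Both treatments are valid; yours buys self-containedness and a slightly sharper statement of what $\det M\neq0$ in $\kk$ is for.
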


\begin{proof}
The linear map $\varphi:\f{g}_\beta\rightarrow K(\f {g}_{\alpha})$
given by the formulae above
satisfies
\begin{align*}
&\textstyle[\varphi(y'),\varphi(z')]=0=\varphi([y',z'])\\
&\textstyle[\varphi(x'),\varphi(y')]=\frac{1}{m\alpha +
r}[x,y^{r}z^{m}]=\frac{1}{m\alpha +
r}(r+m\alpha)y^{r}z^{m}=\varphi(y')=\varphi([x',y']),\\
&\textstyle[\varphi(x'),\varphi(z')]=\frac{1}{m\alpha +
r}[x,y^{q}z^{n}]=\frac{1}{m\alpha +
r}(q+n\alpha)y^{q}z^{n}=\beta\varphi(z')=\varphi([x',z']).
\end{align*}
By universal properties of the enveloping algebra $U(\f{g}_\beta)$ and
of localization, we obtain an algebra morphism
$\varphi:K(\f{g}_\beta)\to K(\f{g}_\alpha)$.
Note that the map $\varphi$ is injective since $y^{r}z^{m}$ and
$y^{q}z^{n}$ are algebraically independent thanks to the hypothesis
$\det M\neq 0$ in $\kk$
(use the Jacobian criterion in zero characteristic or its adaptation
\cite[Theorem 1]{MSS} to positive characteristic).
Finally, if $M$ is invertible over $\Z$, we can define similarly the map
$\varphi^{-1}$ by the images of $x,y,z$ as monomials in
$x',y',z'$.\end{proof}

\begin{proposition}\label{CS}Let $\alpha,\beta\in\kk\setminus\kk_0$. If $\alpha$ and $\beta$ are in the same orbit under the homographic action of ${\rm GL}_2(\Z)$ on $\kk\setminus\kk_0$, then $K(\f {g}_{\alpha})$ 
and $K(\f g_\beta)$ are $\kk$-isomorphic.
\end{proposition}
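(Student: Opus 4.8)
The plan is to deduce the proposition directly from Lemma~\ref{embedmonomial}, of which it is essentially a repackaging. The hypothesis that $\alpha$ and $\beta$ lie in the same orbit under the homographic action of ${\rm GL}_2(\Z)$ on $\kk\setminus\kk_0$ means precisely that there exists a matrix $M=\left(\begin{smallmatrix}n &q\\m & r\end{smallmatrix}\right)\in{\rm GL}_2(\Z)$ such that $\beta=\frac{n\alpha+q}{m\alpha+r}$, which is exactly the relation between $\alpha$, $\beta$ and $M$ occurring in the statement of that lemma. So the first step is simply to fix such a matrix $M$ realizing the passage from the orbit representative $\alpha$ to $\beta$.

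The second step is to check that the hypotheses of Lemma~\ref{embedmonomial} are satisfied. Since $M\in{\rm GL}_2(\Z)$ we have $\det M=\pm1$, whose image in $\kk$ is nonzero in every characteristic; in particular $\det M\neq0$ in $\kk$, so the lemma applies. I would also record, in order for the homography to make sense, that the denominator $m\alpha+r$ is nonzero in $\kk$: as $\alpha\notin\kk_0$, the elements $1$ and $\alpha$ are linearly independent over $\kk_0$, so the vanishing of $m\alpha+r$ (with $m,r$ viewed in $\kk_0$) would force the images of $m$ and $r$ in $\kk$ to be zero, whence $\det M=nr-qm$ would vanish in $\kk$, contradicting $\det M=\pm1$.

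The final step is to invoke the last sentence of Lemma~\ref{embedmonomial}: because $M$ is invertible over $\Z$, the injective $\kk$-algebra morphism $\varphi:K(\f g_\beta)\to K(\f g_\alpha)$ it produces is in fact an isomorphism, which is exactly the assertion to be proved. There is essentially no obstacle here, since the entire computational content has already been absorbed into Lemma~\ref{embedmonomial}; the only point requiring a moment's care is matching the convention for the homographic action, namely that $M=\left(\begin{smallmatrix}n &q\\m & r\end{smallmatrix}\right)$ sends $\alpha$ to $\frac{n\alpha+q}{m\alpha+r}$, together with the well-definedness of the denominator noted above.
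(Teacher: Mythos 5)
Your proposal is correct and follows exactly the paper's route: the paper's proof is the one-line observation that Proposition~\ref{CS} is Lemma~\ref{embedmonomial} applied with the homographic action $\left(\begin{smallmatrix}n &q\\m & r\end{smallmatrix}\right)\cdot\alpha=\frac{n\alpha+q}{m\alpha+r}$, which is what you do. Your additional checks (that $\det M=\pm1$ is nonzero in $\kk$ in any characteristic, and that the denominator $m\alpha+r$ cannot vanish when $\alpha\notin\kk_0$) are sound and merely make explicit what the paper leaves implicit.
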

\begin{proof}We apply Lemma \ref{embedmonomial} considering the homographic action of the group $\GL_2(\Z)$ on $\kk\setminus\kk_0$
defined by $\left(\begin{smallmatrix}n &q\\m & r\end{smallmatrix} \right)\cdot\alpha=\frac{n\alpha+q}{m\alpha+r}$.
\end{proof}

\begin{remark}\label{ell}
We take here $\kk=\C$. For any $\alpha\in\R\setminus\Q$, the orbit of $\alpha$ under the above action of $\GL_2(\Z)$ is the set
of real numbers having the same continued fraction development after a certain point; see \cite[Theorem 175]{HW}.

If $\alpha\in\C\setminus\R$, the sign of the imaginary part of $g\cdot\alpha$ is the product of the sign of the imaginary part of $\alpha$ by $\det g$ for any $g\in\GL_2(\Z)$.
We deduce a bijective correspondence between the orbits of the elements of $\C\setminus\R$ under the action of $\GL_2(\Z)$, and the orbits of the elements of the Poincar\'e halfplane $\HH$ under the action of the subgroup $\SL_2(\Z)$.
By classical number theoretical results (see \cite{JPS} pp 81-82), the quotient set $\HH/\SL_2(\Z)$ can be identified with the set of isomorphism classes of complex elliptic curves. Observe that in the associated correspondence 
with lattices of $\C$, the lattice $\Z\oplus\alpha\Z$ is by \eqref{UgOre} the set of the eigenvalues of the inner derivation $\ad_x$ in the localization $\kk[y^\pm,z^\pm][x\,;\,D_\alpha]$ of $U(\mathfrak g_\alpha)$.
\end{remark}

In positive characteristic, besides the kind of morphisms introduced in Lemma \ref{embedmonomial}, any $K(\f g_\beta)$ can be embedded in $K(\f g_\alpha)$ for any $\alpha\in\kk\setminus\FF_\ell$.

\begin{proposition}
Assume that $\car\kk=\ell>0$. Let $\alpha\in\kk\setminus\FF_\ell$ and $\beta\in\kk^\times$. 
There exists an injective $\kk$-algebra morphism $\psi : K(\f {g}_{\beta})\rightarrow K(\f {g}_{\alpha})$ defined by
\begin{align*}
\psi(x')=\frac{\beta-\alpha}{\alpha^\ell-\alpha}\,x^\ell+\frac{\alpha^\ell-\beta}{\alpha^\ell-\alpha}\,x,\quad \psi(y')=y\quad\mbox{and}\quad \psi(z')=z,
\end{align*}and its image is of codimension $\ell$.
\end{proposition}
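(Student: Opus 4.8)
The plan is to follow the pattern of Lemma~\ref{embedmonomial}: produce a Lie algebra morphism $\f g_\beta\to K(\f g_\alpha)$, extend it to the enveloping algebra and then to the skewfield of fractions, and finally compute the codimension of the image. Throughout write $A=\frac{\beta-\alpha}{\alpha^\ell-\alpha}$ and $B=\frac{\alpha^\ell-\beta}{\alpha^\ell-\alpha}$, so that $\psi(x')=Ax^\ell+Bx$, and record the two elementary identities $A+B=1$ and $A\alpha^\ell+B\alpha=\beta$. I would first check that the given formulas define a morphism of Lie algebras, i.e. that the images satisfy the brackets $[x',y']=y'$, $[x',z']=\beta z'$, $[y',z']=0$ of $\f g_\beta$. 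The relation $[\psi(y'),\psi(z')]=[y,z]=0$ is clear. Using \eqref{relx} together with $(x+1)^\ell=x^\ell+1$ and $(x+\alpha)^\ell=x^\ell+\alpha^\ell$ in characteristic $\ell$, one gets $[x^\ell,y]=y$ and $[x^\ell,z]=\alpha^\ell z$; combined with $[x,y]=y$ and $[x,z]=\alpha z$ this gives $[\psi(x'),y]=(A+B)y=y=\psi(y')$ and $[\psi(x'),z]=(A\alpha^\ell+B\alpha)z=\beta z=\beta\psi(z')$, as required. By the universal property of $U(\f g_\beta)$ the map extends to a $\kk$-algebra morphism $\psi:U(\f g_\beta)\to K(\f g_\alpha)$.

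Assume now $\beta\neq\alpha$, so that $A\neq 0$ (if $\beta=\alpha$ then $A=0$, $B=1$ and $\psi$ is the identity). For the degree $v=-\deg$ of~\ref{DV} we have $v(y)=v(z)=0$ and $v(w)=-\ell$ where $w=\psi(x')$, since $w$ has degree $\ell$ in $x$. The images $y^az^bw^c$ of the Poincar\'e--Birkhoff--Witt basis $y'^az'^bx'^c$ have leading $x$-term $A^c y^az^b x^{\ell c}$, so a vanishing $\kk$-linear combination forces, by descending induction on $c$ and the $\kk$-linear independence of the monomials $y^az^b$ in $\kk(y,z)$, all coefficients to be zero; hence $\psi$ is injective on $U(\f g_\beta)$. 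Every nonzero element of $U(\f g_\beta)$ therefore maps to a nonzero, hence invertible, element of the skewfield $K(\f g_\alpha)$, and by the universal property of localization $\psi$ extends to $K(\f g_\beta)\to K(\f g_\alpha)$; being a morphism out of a skewfield it is automatically injective.

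It remains to compute the codimension of the image $M=\psi(K(\f g_\beta))$. Since $\ad_x^\ell=\ad_{x^\ell}$ in characteristic $\ell$, we have $[w,f]=(AD_\alpha^\ell+BD_\alpha)(f)$ for $f\in\kk(y,z)$, so $M=\kk(y,z)(w)$ and $K(\f g_\alpha)=M(x)$. From $A\neq0$ we get $x^\ell=A^{-1}(w-Bx)\in M+Mx$. I would then show that $N=\sum_{i=0}^{\ell-1}x^iM$ equals $K(\f g_\alpha)$: the key point is that $\ad_x$ sends the generators $\kk(y,z)$ and $w$ of $M$ into $M$ (indeed $\ad_x(w)=0$ and $\ad_x(f)=D_\alpha(f)\in\kk(y,z)$), hence stabilises the sub-skewfield $M$; together with $x^\ell\in M+Mx$ this makes $N$ stable under left and right multiplication by $x$ and by $M$, so $N$ is a subdomain of $K(\f g_\alpha)$ finite-dimensional over $M$, thus a sub-skewfield containing $M$ and $x$, whence $N=K(\f g_\alpha)$. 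Finally $1,x,\dots,x^{\ell-1}$ are independent over $M$: every nonzero element of $M=\kk(y,z)(w)$ has $v$-value in $\ell\Z$, so the terms $x^im_i$ of a relation have pairwise distinct $v$-values and cannot cancel. Hence $K(\f g_\alpha)$ is free of rank $\ell$ over $M$, i.e. $M$ has codimension $\ell$.

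I expect the main obstacle to be this last computation rather than the routine verification of the relations. The delicate point is that $x$ does not normalise $M$ (it commutes with $w$ but not with $\kk(y,z)$), so $K(\f g_\alpha)$ is not literally an Ore extension of $M$ by the single element $x$; the spanning statement must instead be derived from the stability of $M$ under $\ad_x$, and the independence statement from the valuation $v$ of~\ref{DV}.
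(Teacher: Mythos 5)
Your proposal is correct, and for the part the paper actually proves it coincides with the paper's argument: the paper's entire proof is your first paragraph, namely the bracket verification via \eqref{relx} with $i=\ell$ and the identities $A+B=1$, $A\alpha^\ell+B\alpha=\beta$; injectivity is left to the same universal-property mechanism as in Lemma~\ref{embedmonomial}, and the codimension-$\ell$ assertion is not proved at all in the paper. So your last two paragraphs supply genuinely new content, and they hold up under scrutiny. Your injectivity argument (leading $x$-terms of the images $y^az^bw^c$ of the PBW basis, valid since $A\neq0$) is a valuation-flavoured replacement for the algebraic-independence/Jacobian argument the paper uses in the analogous Lemma~\ref{embedmonomial}. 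Your codimension computation is the real addition: writing $M=\psi(K(\f g_\beta))$ for the division subring generated by $\kk(y,z)$ and $w=\psi(x')$, the set $\{m\in M:\ad_x(m)\in M\}$ is a division subring of $M$ containing the generators (closure under products and inverses is immediate from the derivation identities), so $\ad_x(M)\subseteq M$; combined with $x^\ell=A^{-1}w-A^{-1}Bx\in M+\kk x$ this makes $N=\sum_{i=0}^{\ell-1}x^iM$ a subring that is finite-dimensional as a right $M$-space, hence a division subring containing $M$ and $x$, hence equal to $K(\f g_\alpha)$; and the independence of $1,x,\dots,x^{\ell-1}$ over $M$ follows because every nonzero element of $M$ is an Ore fraction $pq^{-1}$ with $p,q\in\kk(y,z)[w]$, whose valuations lie in $\ell\Z$ since $v(f_cw^c)=-c\ell$. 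Each of these steps is sound, and your closing remark correctly identifies why the naive route fails: $x$ does not normalise $M$, so $K(\f g_\alpha)$ is not an Ore extension of $M$ by $x$, and the $\ad_x$-stability detour is exactly what replaces it. One further point in your favour: by isolating the case $\beta=\alpha$ (where $A=0$ and $\psi$ is the identity, so the image has codimension $1$) you make explicit that the codimension-$\ell$ conclusion tacitly requires $\beta\neq\alpha$, a caveat the paper's statement and proof both overlook.
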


\begin{proof}
It is clear that $[\psi(y'),\psi(z')]=0=\psi([y',z'])$.
Moreover by using relations \eqref{relx} with $i=\ell$ we have
\begin{align*}
[\psi(x'),\psi(y')]&\textstyle=\frac{\beta-\alpha}{\alpha^\ell-\alpha}[x^\ell,y]+\frac{\alpha^\ell-\beta}{\alpha^\ell-\alpha}[x,y]\\
&=\textstyle\left(\frac{\beta-\alpha}{\alpha^\ell-\alpha}+\frac{\alpha^\ell-\beta}{\alpha^\ell-\alpha}\right)y= y=\psi([x',y']),\\
[\psi(x'),\psi(z')]&\textstyle=\frac{\beta-\alpha}{\alpha^\ell-\alpha}[x^\ell,z]+\frac{\alpha^\ell-\beta}{\alpha^\ell-\alpha}[x,z]\\
&=\textstyle\left(\frac{\beta-\alpha}{\alpha^\ell-\alpha}\alpha^\ell+\frac{\alpha^\ell-\beta}{\alpha^\ell-\alpha}\alpha\right)z=\beta z=\psi([x',z']).
\end{align*}
\end{proof}

\subsection{Some examples in positive characteristic}
For $\kk$ of positive characteristic, arithmetical arguments allow to deduce from Proposition \ref{CS} isomorphism results in some particular cases.
\begin{corollary}\label{SL}
Assume that $\car\kk=\ell>2$ with $\FF_{\ell^2}\subset\kk$.
Then $K(\f g_\alpha)$ and $K(\f g_\beta)$ are isomorphic as $\kk$-algebras for all $\alpha,\beta\in\FF_{\ell^2}\setminus\FF_\ell$.
\end{corollary}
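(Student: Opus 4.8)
The plan is to deduce this directly from Proposition \ref{CS}. Since $\car\kk=\ell$, the prime subfield is $\kk_0=\FF_\ell$, so $\FF_{\ell^2}\setminus\FF_\ell\subseteq\kk\setminus\kk_0$, and it suffices to prove that the whole set $\FF_{\ell^2}\setminus\FF_\ell$ forms a \emph{single} orbit under the homographic action of $\GL_2(\Z)$ on $\kk\setminus\kk_0$.

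First I would observe that this action factors through reduction modulo $\ell$. The entries $n,q,m,r$ of a matrix $\left(\begin{smallmatrix}n&q\\m&r\end{smallmatrix}\right)\in\GL_2(\Z)$ act on $\alpha\in\kk$ only through their images in $\FF_\ell\subset\kk$, and scalar matrices act trivially; hence the action factors through $\GL_2(\Z)\to\GL_2(\FF_\ell)$ and then through $\mathrm{PGL}_2(\FF_\ell)$. Writing $\bar G$ for the image of $\GL_2(\Z)$ in $\mathrm{PGL}_2(\FF_\ell)$, I would invoke the classical surjectivity of the reduction map $\SL_2(\Z)\to\SL_2(\FF_\ell)$ to conclude that $\bar G\supseteq\mathrm{PSL}_2(\FF_\ell)$.

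Next, identifying $\FF_{\ell^2}\setminus\FF_\ell$ with the points of $\mathbb P^1(\FF_{\ell^2})$ not rational over $\FF_\ell$, I would check by a direct orbit--stabilizer computation that $\mathrm{PGL}_2(\FF_\ell)$ acts transitively on this set: solving $g\cdot\alpha=\alpha$ together with the quadratic minimal polynomial of $\alpha$ over $\FF_\ell$ identifies the stabilizer of $\alpha$ with a non-split torus $\bar T$, the image of $\FF_{\ell^2}^\times\hookrightarrow\GL_2(\FF_\ell)$, cyclic of order $\ell+1$; since $|\mathrm{PGL}_2(\FF_\ell)|=\ell(\ell^2-1)$ and $|\FF_{\ell^2}\setminus\FF_\ell|=\ell(\ell-1)$, the orbit of $\alpha$ fills the whole set. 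The remaining point is to upgrade transitivity of $\mathrm{PGL}_2(\FF_\ell)$ to transitivity of the possibly smaller group $\bar G$. For this I would use that the determinant on $\bar T$ is the norm $N\colon\FF_{\ell^2}^\times\to\FF_\ell^\times$, $N(t)=t^{\ell+1}$, which is surjective; as $\ell>2$ the group $\FF_\ell^\times$ contains non-squares, so $\bar T$ contains elements of non-square determinant, whence $\bar T\not\subseteq\mathrm{PSL}_2(\FF_\ell)$. Because $\mathrm{PSL}_2(\FF_\ell)$ has index $2$ in $\mathrm{PGL}_2(\FF_\ell)$, this gives $\mathrm{PSL}_2(\FF_\ell)\cdot\bar T=\mathrm{PGL}_2(\FF_\ell)$, so already $\mathrm{PSL}_2(\FF_\ell)$, and a fortiori $\bar G$, acts transitively on $\FF_{\ell^2}\setminus\FF_\ell$.

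Combining these steps, all $\alpha\in\FF_{\ell^2}\setminus\FF_\ell$ lie in one $\GL_2(\Z)$-orbit, and Proposition \ref{CS} then yields the asserted $\kk$-isomorphisms. The main obstacle I anticipate is exactly this last point: one must work with $\bar G\supseteq\mathrm{PSL}_2(\FF_\ell)$ rather than with the full $\mathrm{PGL}_2(\FF_\ell)$, and it is here that the hypothesis $\ell>2$ enters essentially, through the existence of non-squares in $\FF_\ell^\times$ and the surjectivity of the norm on the non-split torus.
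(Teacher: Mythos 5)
Your proof is correct, and it reaches the paper's key reduction in the same way: via Proposition \ref{CS} and the surjectivity of $\SL_2(\Z)\to\SL_2(\FF_\ell)$, everything comes down to a transitivity statement for a finite matrix group acting on $\FF_{\ell^2}\setminus\FF_\ell$, proved by an orbit--stabilizer count governed by the norm of $\FF_{\ell^2}/\FF_\ell$. Where you genuinely diverge is in how that count is organized. The paper stays inside $\SL_2(\FF_\ell)$ and evaluates the stabilizer at one cleverly chosen point $\theta$ with $\theta^2=u$, $u$ a non-square of $\FF_\ell^\times$: for this $\theta$ the determinant-one condition turns the stabilizer directly into $\ker N_{\FF_{\ell^2}/\FF_\ell}$, of order $\ell+1$, and a single computation gives $|\SL_2(\FF_\ell)\cdot\theta|=\ell^2-\ell$, hence transitivity, without ever invoking $\mathrm{PGL}_2$ or $\mathrm{PSL}_2$. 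You instead take an arbitrary $\alpha$, identify its $\mathrm{PGL}_2(\FF_\ell)$-stabilizer with the non-split torus $\bar T$ of order $\ell+1$, deduce $\mathrm{PGL}_2$-transitivity, and then must descend: $\bar T\not\subseteq\mathrm{PSL}_2(\FF_\ell)$ because the torus contains elements of non-square determinant (the norm being onto), so $\mathrm{PSL}_2(\FF_\ell)\cdot\bar T=\mathrm{PGL}_2(\FF_\ell)$, and the standard fact that a subgroup $N$ with $NH=G$ inherits transitivity finishes the argument. The hypothesis $\ell>2$ plays the same role in both proofs (existence of non-squares in $\FF_\ell^\times$) but surfaces at different places: in the paper it produces the special point $\theta$ with $\FF_{\ell^2}=\FF_\ell(\theta)$, in yours it makes the descent from $\mathrm{PGL}_2$ to $\mathrm{PSL}_2$ work. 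What your version buys is structural clarity --- the stabilizer of \emph{every} $\alpha$ is identified as the non-split torus, and no special point is needed; what the paper's buys is economy --- by imposing determinant one from the outset it never leaves $\SL_2(\FF_\ell)$, so no descent step is required.
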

\begin{proof}
The proof consists in showing that the action of $\rm{SL}_2(\Z)$ on $\FF_{\ell^2}\setminus\FF_\ell$ is transitive.
The standard congruence map $\rm{SL}_2(\Z)\to\rm{SL}_2(\FF_\ell)$ is onto (see for instance \cite[Lemma 1.38]{Shi} or \cite[Lemma 6.3.10]{HC}).
So thanks to Proposition \ref{CS} it is sufficient to prove that the induced $\rm{SL}_2(\FF_\ell)$-action on $\FF_{\ell^2}\setminus\FF_\ell$ is transitive.

Let $u\in \FF_{\ell}^\times\setminus(\FF_\ell^{\times})^2$ and choose $\theta\in\FF_{\ell^2}^\times$ such that $\theta^2=u$. Then $\{1,\theta\}$ is a $\FF_\ell$-basis of $\FF_{\ell^2}$.
Let us compute the cardinal of the orbit of~$\theta$.
An element $M=\left(\begin{smallmatrix}u_1 & v_1 \\ w_1 & t_1 \end{smallmatrix}\right)$ of $\rm{SL}_2(\FF_\ell)$ lies in $\mbox{Stab}_{\rm{SL}_2(\FF_\ell)}(\theta)$ if and only if 
$\frac{u_1\theta + v_1}{w_1\theta + t_1}=\theta$ and $\det M=1$.
The first equality is equivalent to $(u_1-t_1)\theta+v_1-w_1u=0$ that is $u_1=t_1$ and $v_1=w_1u$. Hence the second equality becomes $u_1^2-uw_1^2=1$, or equivalently 
$u_1+w_1\theta\in\ker N_{\FF_{\ell^2}/\FF_\ell}$ where $N_{\FF_{\ell^2}/\FF_\ell}$ denotes the norm map of the Galois extension $\FF_{\ell^2}/\FF_\ell$. It follows that
\[\left|\mbox{Stab}_{\rm{SL}_2(\FF_\ell)}(\theta)\right|=\left|\ker N_{\FF_{\ell^2}/\FF_\ell}\right|.\]
Now, the norm map is known to be an onto morphism $\FF_{\ell^2}^\times\to\FF_{\ell}^\times$ (see for instance \cite{Jac85} exercise 1, p. 288), then 
\[|\ker N_{\FF_{\ell^2}/\FF_\ell}|=\dfrac{|\FF_{\ell^2}^\times|}{|\FF_{\ell}^\times|}=\dfrac{\ell^2-1}{\ell-1}=\ell+1.
\]
Finally
\[\left|\rm{SL}_2(\FF_\ell). \theta\right|=
\dfrac{|\rm{SL}_2(\FF_\ell)|}{\left|\mbox{Stab}_{\rm{SL}_2(\FF_\ell)}(\theta)\right|}
=\dfrac{(\ell^2-1)(\ell^2-\ell)}{(\ell-1)(\ell+1)}
=\ell^2-\ell
=|\FF_{\ell^2}\setminus\FF_\ell|\]and the proof is complete.
\end{proof}

\begin{corollary}\label{GL}
Assume that $\car\kk=\ell>2$ with $\FF_{\ell^3}\subset\kk$ and $\ell\equiv 3(4)$.
Then $K(\f g_\alpha)$ and $K(\f g_\beta)$ are isomorphic as $\kk$-algebras for all $\alpha,\beta\in\FF_{\ell^3}\setminus\FF_\ell$.
\end{corollary}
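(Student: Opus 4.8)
The plan is to deduce the statement from Proposition \ref{CS} by showing that $\GL_2(\Z)$ acts transitively on $\FF_{\ell^3}\setminus\FF_\ell$: any two elements $\alpha,\beta$ would then lie in the same orbit, so $K(\f g_\alpha)$ and $K(\f g_\beta)$ would be $\kk$-isomorphic. Exactly as in the proof of Corollary \ref{SL}, I would first note that the homographic action $\left(\begin{smallmatrix}n&q\\m&r\end{smallmatrix}\right)\cdot\alpha=\frac{n\alpha+q}{m\alpha+r}$ depends only on the reduction of the matrix modulo $\ell$, so it factors through $\GL_2(\FF_\ell)$. The image of $\GL_2(\Z)$ in $\GL_2(\FF_\ell)$ is precisely the subgroup $G=\{M\in\GL_2(\FF_\ell):\det M=\pm1\}$: the congruence map $\SL_2(\Z)\to\SL_2(\FF_\ell)$ is onto, and left multiplication by $\mathrm{diag}(1,-1)\in\GL_2(\Z)$ produces the determinant $-1$ coset. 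Since scalar matrices act trivially, the action descends further to the image $\bar G$ of $G$ in $\mathrm{PGL}_2(\FF_\ell)$.

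Next I would establish that $\mathrm{PGL}_2(\FF_\ell)$ acts simply transitively on $\FF_{\ell^3}\setminus\FF_\ell$. Because $3$ is prime, every $\alpha\in\FF_{\ell^3}\setminus\FF_\ell$ has degree $3$ over $\FF_\ell$, so $1,\alpha,\alpha^2$ are $\FF_\ell$-linearly independent; a Möbius transformation with $\FF_\ell$-coefficients then sends $\alpha$ to another degree-$3$ element, so the action is well defined on this set. A matrix $\left(\begin{smallmatrix}a&b\\c&d\end{smallmatrix}\right)$ fixes $\alpha$ iff $c\alpha^2+(d-a)\alpha-b=0$, which forces $c=0$, $a=d$, $b=0$; hence the stabilizer in $\mathrm{PGL}_2(\FF_\ell)$ is trivial and the action is free. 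Since the cardinalities match, $|\mathrm{PGL}_2(\FF_\ell)|=(\ell-1)\ell(\ell+1)=\ell^3-\ell=|\FF_{\ell^3}\setminus\FF_\ell|$, a free action has a single orbit and is therefore transitive.

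It then remains to prove $\bar G=\mathrm{PGL}_2(\FF_\ell)$, and this is the step where the hypothesis $\ell\equiv 3\,(4)$ is essential. The order of $\bar G$ equals $|G|/|G\cap Z|$, where $Z$ is the group of scalar matrices and $|G|=2\,|\SL_2(\FF_\ell)|=2\ell(\ell^2-1)$. A scalar $aI$ lies in $G$ iff $a^2=\pm1$; when $\ell\equiv 3\,(4)$ the equation $a^2=-1$ has no solution in $\FF_\ell$, so $G\cap Z=\{\pm I\}$ has order $2$ and $|\bar G|=\ell(\ell^2-1)=|\mathrm{PGL}_2(\FF_\ell)|$, forcing $\bar G=\mathrm{PGL}_2(\FF_\ell)$. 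Conceptually, the determinant $-1$ elements of $G$ represent the nontrivial class of $\FF_\ell^\times/(\FF_\ell^\times)^2$ precisely because $-1$ is a nonsquare, so $\bar G$ meets both cosets of $\mathrm{PSL}_2(\FF_\ell)$ in $\mathrm{PGL}_2(\FF_\ell)$, rather than collapsing into the index-two subgroup $\mathrm{PSL}_2(\FF_\ell)$.

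Combining the three steps, $\GL_2(\Z)$ acts transitively on $\FF_{\ell^3}\setminus\FF_\ell$, and Proposition \ref{CS} yields the isomorphisms. The main obstacle is the final counting: the congruence condition $\ell\equiv 3\,(4)$ cannot be dropped by this method, since for $\ell\equiv 1\,(4)$ the element $-1$ becomes a square, $\bar G$ reduces to $\mathrm{PSL}_2(\FF_\ell)$, and transitivity on $\FF_{\ell^3}\setminus\FF_\ell$ fails.
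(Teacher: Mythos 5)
Your proof is correct, and it shares its skeleton with the paper's: both reduce via Proposition \ref{CS} and the surjectivity of the congruence map to the transitivity of $\SL^{\pm}_2(\FF_\ell)=\{M\in\GL_2(\FF_\ell):\det M=\pm1\}$ on $\FF_{\ell^3}\setminus\FF_\ell$, both rest on the same linear-algebra computation (a matrix fixing a degree-$3$ element $\theta$ must have $c=b=0$ and $a=d$, since $1,\theta,\theta^2$ are $\FF_\ell$-independent), and both invoke $\ell\equiv3\,(4)$ through the nonexistence of a square root of $-1$, finishing with an orbit--cardinality count. The difference is organizational, and it buys something. The paper computes the stabilizer directly inside $\SL^{\pm}_2(\FF_\ell)$, finds it equal to $\{\pm I\}$, and compares $|\SL^{\pm}_2(\FF_\ell)|/2=\ell^3-\ell$ with $|\FF_{\ell^3}\setminus\FF_\ell|$. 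You instead factor the action through $\mathrm{PGL}_2(\FF_\ell)$ and split the argument into two independent statements: (a) $\mathrm{PGL}_2(\FF_\ell)$ acts simply transitively on $\FF_{\ell^3}\setminus\FF_\ell$, which holds for every prime $\ell$ with no congruence hypothesis; and (b) the image $\bar G$ of $\SL^{\pm}_2(\FF_\ell)$ in $\mathrm{PGL}_2(\FF_\ell)$ is the whole group exactly when $-1$ is a nonsquare in $\FF_\ell$, i.e. when $\ell\equiv3\,(4)$. This decomposition isolates precisely where the congruence hypothesis enters, and it yields, essentially for free, the converse observation that for $\ell\equiv1\,(4)$ one has $\bar G=\mathrm{PSL}_2(\FF_\ell)$, which then has exactly two orbits on $\FF_{\ell^3}\setminus\FF_\ell$, so that this method genuinely cannot be pushed through in that case --- a point the paper's proof leaves implicit.
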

\begin{proof}We consider the subgroup $\rm{SL}^{\pm}_2(\FF_\ell)=\{M\in\rm{GL}_2(\FF_\ell)\,;\,\det M=\pm1\}$ and the surjective map $\rm{GL}_2(\Z)\to\rm{SL}^\pm_2(\FF_\ell)$ 
obviously deduced from the canonical map $\rm{SL}_2(\Z)\to\rm{SL}_2(\FF_\ell)$. Applying again 
Proposition \ref{CS} it is sufficient to prove that the induced $\rm{SL}^\pm_2(\FF_\ell)$-action on $\FF_{\ell^3}\setminus\FF_\ell$ is transitive.

Let $\theta\in\FF_{\ell^3}\setminus\FF_{\ell}$. By a dimensional argument $\FF_{\ell^3}=\FF_{\ell}(\theta)$ and $\{1,\theta,\theta^2\}$ is a $\FF_\ell$-basis of $\FF_{\ell^3}$.
An element $M=\left(\begin{smallmatrix}u_1 & v_1 \\ w_1 & t_1 \end{smallmatrix}\right)$ of $\rm{SL}^\pm_2(\FF_\ell)$ lies in $\mbox{Stab}_{\rm{SL}^\pm_2(\FF_\ell)}(\theta)$ if and only if 
$\frac{u_1\theta + v_1}{w_1\theta + t_1}=\theta$ and $\det M=\pm 1$. The first equality is equivalent to $w_1=v_1=0$ and $u_1=t_1$. Hence the second equality becomes $u_1^2=\pm 1$.
Since $-1$ is not a square in $\FF_\ell$ because of the assumption on $\ell$, we deduce $u_1=\pm 1$ and $M=\pm\left(\begin{smallmatrix}1 & 0 \\ 0 & 1 \end{smallmatrix}\right)$.
The identities
\[\left|\mbox{Stab}_{\rm{SL}^\pm_2(\FF_\ell)}(\theta)\right|=2\quad\text{and}\quad|\rm{SL}^\pm_2(\FF_\ell)|=2|\rm{SL}_2(\FF_\ell)|=2\ell(\ell^2-1)\]
finally imply $\left|\rm{SL}_2(\FF_\ell). \theta\right|=\ell^3-\ell=|\FF_{\ell^3}\setminus\FF_\ell|$ and the proof is complete.
\end{proof}

In the case $\ell=2$, the transitivity of the action of $\rm{SL}_2(\FF_\ell)$ can be obtained by direct arguments.
\begin{corollary}Assume that $\car\kk=2$.
\begin{itemize}
\item[(i)] If $\FF_4\subset\kk$, then $K(\f g_\alpha)$ and $K(\f g_\beta)$ are isomorphic as $\kk$-algebras for all $\alpha,\beta\in\FF_4\setminus\FF_2$.
\item[(ii)] If $\FF_8\subset\kk$, then $K(\f g_\alpha)$ and $K(\f g_\beta)$ are isomorphic as $\kk$-algebras for all $\alpha,\beta\in\FF_8\setminus\FF_2$.
\end{itemize}
\end{corollary}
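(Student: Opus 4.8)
The plan is to apply Proposition \ref{CS}: since $\car\kk=2$ we have $\kk_0=\FF_2$, so it suffices to prove that the homographic action of $\GL_2(\Z)$ on the relevant set is transitive. The key simplification in characteristic $2$ is that this action factors through the reduction map $\GL_2(\Z)\to\GL_2(\FF_2)$, because the integer entries of a matrix act on an element of $\FF_4$ or $\FF_8$ only through their classes modulo~$2$. Here $\GL_2(\FF_2)=\SL_2(\FF_2)$ has order $6$ and the reduction map is onto (as already used in the proofs of Corollaries \ref{SL} and \ref{GL}). Moreover the homographic action of $\GL_2(\FF_2)$ is the standard one on the projective line, which fixes $\mathbb P^1(\FF_2)$ setwise; hence it preserves its complement, namely $\FF_4\setminus\FF_2$ and $\FF_8\setminus\FF_2$, so the orbits I compute stay inside the intended sets.

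For assertion (i), the set $\FF_4\setminus\FF_2$ consists of exactly the two elements $\omega$ and $\omega^2$, where $\omega$ is a primitive cube root of unity, and these are exchanged by inversion since $\omega^{-1}=\omega^2$. I would simply exhibit the matrix $\left(\begin{smallmatrix}0&1\\1&0\end{smallmatrix}\right)\in\GL_2(\Z)$, whose associated homography is $\alpha\mapsto\alpha^{-1}$: it sends $\omega$ to $\omega^2$, so the action is transitive and Proposition \ref{CS} yields the desired isomorphism.

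For assertion (ii), I would argue by orbit counting exactly as in Corollary \ref{SL}. Fix $\theta\in\FF_8\setminus\FF_2$, so that $\{1,\theta,\theta^2\}$ is an $\FF_2$-basis of $\FF_8$. A matrix $M=\left(\begin{smallmatrix}u_1&v_1\\w_1&t_1\end{smallmatrix}\right)\in\GL_2(\FF_2)$ stabilizes $\theta$ if and only if $u_1\theta+v_1=\theta(w_1\theta+t_1)$, that is $w_1\theta^2+(u_1+t_1)\theta+v_1=0$; by linear independence of $1,\theta,\theta^2$ this forces $w_1=v_1=0$ and $u_1=t_1$, whence invertibility gives $M=\left(\begin{smallmatrix}1&0\\0&1\end{smallmatrix}\right)$. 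The stabilizer being trivial, the orbit of $\theta$ has cardinal $|\GL_2(\FF_2)|=6=|\FF_8\setminus\FF_2|$, so the action is transitive and we conclude once more by Proposition \ref{CS}.

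The computations are routine; the only conceptual point is the one noted in the first paragraph. For $\ell=2$ the counting arguments of Corollaries \ref{SL} and \ref{GL} degenerate---there is no nonsquare element in $\FF_2$, and $-1=1$---so the norm computation valid for $\ell>2$ is unavailable, and transitivity must instead be read off directly, either by exhibiting an explicit element of $\GL_2(\Z)$ as in (i) or from the triviality of the stabilizer as in (ii). I expect no genuine obstacle beyond checking that the reduced matrices remain invertible over $\FF_2$ (which holds since $\det=\pm1\equiv1\pmod 2$) and that the denominators never vanish on $\FF_4\setminus\FF_2$ and $\FF_8\setminus\FF_2$, both of which are automatic from the preservation of $\mathbb P^1(\FF_2)$.
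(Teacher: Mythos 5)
Your proof is correct, and its overall skeleton --- reduce via Proposition \ref{CS} to the transitivity of the action of $\GL_2(\FF_2)=\SL_2(\FF_2)$ on $\FF_4\setminus\FF_2$ and $\FF_8\setminus\FF_2$, using that the homographic action of $\GL_2(\Z)$ factors through the surjective reduction map --- is the same as the paper's. For assertion (i) the two arguments are interchangeable: the paper exhibits $\left(\begin{smallmatrix}1&1\\0&1\end{smallmatrix}\right)$, sending $\omega$ to $\omega+1=\omega^2$, where you exhibit $\left(\begin{smallmatrix}0&1\\1&0\end{smallmatrix}\right)$, sending $\omega$ to $\omega^{-1}=\omega^2$. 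For assertion (ii) your route is genuinely different in method: the paper fixes $\omega$ a root of $T^3+T+1$ and enumerates the images of $\omega$ under the five nontrivial elements of $\SL_2(\FF_2)$ (namely $\omega^3,\omega^5,\omega^6,\omega^4,\omega^2$), checking by hand that the orbit exhausts $\FF_8\setminus\FF_2$; you instead compute the stabilizer of an \emph{arbitrary} $\theta\in\FF_8\setminus\FF_2$ --- trivial, by linear independence of $\{1,\theta,\theta^2\}$ over $\FF_2$ --- and conclude by orbit counting that $|\GL_2(\FF_2)\cdot\theta|=6=|\FF_8\setminus\FF_2|$. This is precisely the argument of Corollary \ref{GL} pushed down to $\ell=2$: the hypothesis $\ell\equiv 3\,(4)$ there served only to force $u_1^2=\pm1$ to give $M=\pm I$, and in characteristic $2$ the degeneration $-1=1$ makes the stabilizer even smaller (trivial, since $I=-I$), so the count still closes --- whereas, as you correctly note, the norm-map argument of Corollary \ref{SL} genuinely breaks for $\ell=2$ for lack of a nonsquare in $\FF_2^\times$. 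What your approach buys: no choice of a defining polynomial for $\FF_8$, no table of powers of $\omega$, and visible uniformity with the $\ell>2$ case of Corollary \ref{GL}; what the paper's enumeration buys is complete explicitness --- one sees the six elements of the orbit and the matrices realizing them. Your preliminary observations (the action preserves the complement of $\mathbb{P}^1(\FF_2)$, denominators cannot vanish on $\FF_8\setminus\FF_2$, reduced matrices stay invertible since $\det=\pm1\equiv1$) are all sound and fill in points the paper leaves implicit.
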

\begin{proof}Let us denote $\FF_4=\{0,1,\omega,\omega^2\}$ where $\omega$ and $\omega^2$ are the roots of the irreducible polynomial $T^2+T+1$ over $\FF_2$. Then $\omega^2=\omega+1=M.\omega$ where $M_1=\left(\begin{smallmatrix}1&1\\0&1\end{smallmatrix}\right)\in\rm{SL}_2(\FF_2)$. Then $\rm{SL}_2(\FF_2)$ acts transitively on $\FF_4\setminus\FF_2$ and the proof is finished as in Corollary \ref{SL}.

We consider now $\FF_8=\{0,1,\omega,\omega^2,\omega^3,\omega^4,\omega^5,\omega^6\}$ for $\omega$ a root of the irreducible polynomial $T^3+T+1$ over $\FF_2$. We have : $\omega^3=\omega+1$, $\omega^4=\omega^2+\omega$, $\omega^5=\omega^2+\omega+1$ and $\omega^6=\omega^2+1$.
Denoting the five non trivial elements of $\rm{SL}_2(\FF_2)$ by
\begin{equation*}M_1=\left(\begin{smallmatrix}1&1\\0&1\end{smallmatrix}\right), \ \ 
M_2=\left(\begin{smallmatrix}1&0\\1&1\end{smallmatrix}\right), \ \ 
M_3=\left(\begin{smallmatrix}0&1\\1&0\end{smallmatrix}\right), \ \ 
M_4=\left(\begin{smallmatrix}0&1\\1&1\end{smallmatrix}\right), \ \ 
M_5=\left(\begin{smallmatrix}1&1\\1&0\end{smallmatrix}\right),\end{equation*}
we compute
$M_1.\omega=\omega^3$, $M_2.\omega=\omega(\omega+1)^{-1}=\omega^5$, $M_3.\omega=\omega^{-1}=\omega^6$, 
$M_4.\omega=(\omega+1)^{-1}=\omega^4$ and $M_5.\omega=(\omega+1)\omega^{-1}=\omega^3\omega^6=\omega^9=\omega^2$
to conclude again that $\rm{SL}_2(\FF_2)$ acts transitively on $\FF_8\setminus\FF_2$.\end{proof}

%%%%%%%%%%%%%%%%%%%%%%%%%%%%%%%%%%%%%%%%%%%%%%%%%%%%%%%%%%%%%%%%%

\section{Valued isomorphisms for enveloping skewfields \\ of the Lie algebras $\f {g}_\alpha$}
\label{continu}

The main goal of classifying Lie algebras $\f g_\alpha$ up to $\kk$-isomorphism of their enveloping skewfields leads naturally to the question 
whether or not the sufficient condition of Proposition \ref{CS} is also necessary. We do not know how to answer it in general. 
In this section we solve in characteristic zero a weaker form of the problem considering isomorphisms of valued skewfields, 
for the valuation canonically associated to the derived subalgebra of the Lie algebras $\f g_\alpha$ as in \ref{DV}.

\subsection{Complete extension of the enveloping skewfield of $\f g_\alpha$}\label{pdo}It is well known and of classical use that the skewfield of fractions 
of an algebra of formal differential operators can be embedded in a skewfield of formal pseudodifferential operators. 
For any field $K$ and any derivation $d$ of $K$, the skewfield  of fractions $Q=K(x\,;\,d)$ of the polynomial algebra of differential operators $A=K[x\,;\,d]$ 
can be embedded in the skewfield $F=K(\!(u\,;\,\delta)\!)$ where $u=x^{-1}$ and $\delta=-d$ (see for instance \cite{Goo}, proposition 5.3). 
We simply recall that the elements of $F$ are Laurent series $\sum_{n>-\infty}a_nu^n$ with coefficients $a_n$ in $K$, the valuation is related to the uniformizer $u$, 
and the commutation law is $ua=au+\sum_{j\geqslant 1}\delta^j(a)u^{j+1}$, which gives rise to $u^{-1}a=au^{-1}-\delta(a)$ or equivalently $xa=ax+d(a)$ for any $a\in K$.\smallskip

For any field $\kk$, we can apply this general process to $U(\f g_\alpha)$ and $K(\f g_\alpha)$ as seen in \eqref{UgOre} and \eqref{UgK} and introduce the skewfield
\begin{equation}\label{UgPDO} F(\f g_\alpha)=\kk(y,z)(\!(u\,;\,\delta_\alpha)\!),\end{equation}
where  
\begin{equation}\label{UgPDObis}u=x^{-1} \quad \text{and} \quad \delta_\alpha=-D_\alpha=-y\partial_y-\alpha z\partial_z.\end{equation}
The valuation $v$ on $K(\f g_\alpha)$ canonically associated to the derived subalgebra of $\f g_\alpha$ and previously defined in \ref{DV} 
is then the discrete valuation associated to $u$ in $ F(\f g_\alpha)$. Therefore:
\begin{equation}v(u)=1\quad\text{ and }\quad v(f)=0 \ \text{ for any } f\in\kk(y,z).
\end{equation}
According to the terminology on valued skewfields, $ F(\f g_\alpha)$ is a local skewfield, that is complete for the topology associated to the discrete valuation~$v$.

\subsection{Valued isomorphisms}Let $\alpha,\beta\in\kk$. Let $v$ and $v'$ the discrete valuations defined as above on $K(\f g_\alpha)$ and $K(\f g_\beta)$ respectively. By definition, we say that $K(\f g_\alpha)$ and $K(\f g_\beta)$ are isomorphic as valued skewfields when there exists an isomorphism of $\kk$-algebras $\varphi:K(\f g_\beta)\to K(\f g_\alpha)$ such that $v(\varphi(s))=v'(s)$ for any $s\in K(\f g_\beta)$ and
$v'(\varphi^{-1}(t))=v(t)$ for any $t\in K(\f g_\alpha)$.
It is clear that such an isomorphism defines by unique extension an isomorphism between $ F(\f g_\beta)$ and $ F(\f g_\alpha)$.
The isomorphisms introduced in Lemma \ref{embedmonomial} are examples of valued isomorphisms, see further Remark \ref{pres}. 

\subsection{Valued rational equivalence for the enveloping algebra of $\f g_\alpha$}
We suppose in this section that $\kk$ is of characteritic zero. We denote by $\overline\kk$ an algebraic closure of $\kk$.
\begin{lemma}\label{puiseux}The notations are those of \ref{notgalpha}. We embed $\kk(y,z)$ in the commutative field $L(y)$, where $L=\bigcup_{n\in\Z_{>0}}\overline\kk(\!(z^{1/n})\!)$ 
is the Puiseux extension of the Laurent series field $\overline\kk(\!(z)\!)$. Let $\alpha$ be an element of $\kk\setminus\Q$. Then:
\begin{itemize}
\item[(i)] There exists a unique $\overline\kk$-derivation of $L(y)$ extending $D_\alpha$, also denoted by $D_\alpha$. It satisfies
$D_\alpha(y^j)=jy^j$ and $D_\alpha(z^{j/n})=\frac jn\alpha z^{j/n}$ for all integers $j\in\Z, n\geqslant 1$.
\item[(ii)] The differential equation $D_\alpha(h)=h$ does not admit non zero solutions in $L$.
\end{itemize}
\end{lemma}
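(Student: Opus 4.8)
The plan is to construct $D_\alpha$ one layer at a time and then to reduce assertion (ii) to a diagonal coefficient computation. For (i), I would first extend $D_\alpha$ $\overline\kk$-linearly from $\kk(y,z)$ to $\overline\kk(y,z)$, every element of $\overline\kk$ being a constant. Setting $t=z^{1/n}$, the relation $t^{n}=z$ together with the Leibniz rule forces $n\,t^{n-1}D_\alpha(t)=D_\alpha(z)=\alpha t^{n}$, hence $D_\alpha(t)=\frac{\alpha}{n}t$, and more generally $D_\alpha(z^{j/n})=\frac{j}{n}\alpha z^{j/n}$ for every $j\in\Z$ after extending Leibniz to inverses; likewise $D_\alpha(y)=y$ gives $D_\alpha(y^{j})=jy^{j}$. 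Thus the stated values on the generators $z^{1/n}$ and $y^{\pm1}$ are the only possible ones.

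For existence on the complete field $\overline\kk(\!(t)\!)$ I would avoid an ad hoc termwise verification by using the operator $\theta=t\,\frac{d}{dt}$, where $\frac{d}{dt}$ is the standard continuous derivation of the Laurent series field: from $(fg)'=f'g+fg'$ one reads off $\theta(fg)=\theta(f)g+f\,\theta(g)$, so $\theta$, and hence $\frac{\alpha}{n}\theta$, is a $\overline\kk$-derivation. Since $\frac{\alpha}{n}\theta$ sends $z^{k/n}$ to $\frac{k\alpha}{n}z^{k/n}$ it is exactly the required extension; it is continuous for the $z$-adic valuation and acts term by term on Puiseux series. These derivations agree whenever $m\mid n$, so they glue to a derivation of $L=\bigcup_{n}\overline\kk(\!(z^{1/n})\!)$, and since $y$ is transcendental over $L$ the prescription $D_\alpha(y)=y$ extends it uniquely to $L(y)$. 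For the uniqueness claim I would work in the continuous category: the values on the generators are forced as above, and continuity then forces the termwise formula, so the natural extension is the unique continuous one---this is the point I expect to need the most care, since a complete field also carries non-continuous derivations.

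Assertion (ii) is then immediate. Given $0\neq h\in L$, it lies in some $\overline\kk(\!(z^{1/n})\!)$; write $h=\sum_{k\geqslant k_0}a_k z^{k/n}$ with $a_{k_0}\neq0$. By (i) we have $D_\alpha(h)=\sum_{k\geqslant k_0}\frac{k\alpha}{n}a_k z^{k/n}$, and as $D_\alpha$ acts diagonally on the powers $z^{k/n}$ the equation $D_\alpha(h)=h$ amounts to $\big(\frac{k\alpha}{n}-1\big)a_k=0$ for every $k$. The initial coefficient $a_{k_0}\neq0$ then yields $\frac{k_0\alpha}{n}=1$, that is $\alpha=n/k_0\in\Q$, contradicting $\alpha\notin\Q$; hence $h=0$. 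In short, the whole technical weight sits in the construction and continuity of $D_\alpha$ on the Puiseux field in (i), after which the irrationality of $\alpha$ closes (ii) in one line.
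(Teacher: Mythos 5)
Your proof is correct, and on the substantive part (ii) it coincides with the paper's own argument: expand $h=\sum_{j\geqslant s}\lambda_j z^{j/n}$ in some $\overline\kk(\!(z^{1/n})\!)$, let $D_\alpha$ act diagonally, and compare coefficients; the paper only inspects the lowest term, getting $\alpha\frac{s}{n}=1$ and the contradiction with $\alpha\notin\Q$, which is your computation restricted to $k=k_0$. Where you genuinely diverge is (i), which the paper dismisses with ``Point (i) is clear'': you actually construct the extension, realizing it on $\overline\kk(\!(t)\!)$, $t=z^{1/n}$, as $\frac{\alpha}{n}\,t\frac{d}{dt}$, checking compatibility when $m\mid n$ so that the derivations glue over the directed union $L$, and then passing to $L(y)$ through the transcendental generator $y$. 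Your caveat about uniqueness is well taken and is precisely what the paper leaves implicit: since $\overline\kk(\!(z)\!)$ has infinite transcendence degree over $\overline\kk(z)$, the derivation $D_\alpha$ of $\kk(y,z)$ admits many abstract extensions to $L(y)$ (its values on $\overline\kk$, on the elements $z^{1/n}$ and on $y$ are forced, but not on a transcendence basis of the series field), so the uniqueness asserted in the lemma only holds among \emph{continuous} (equivalently, termwise-acting) derivations --- which is in any case the only extension used later in the proof of Theorem \ref{mainthm}. In short, your route buys a rigorous statement and proof of (i) at the cost of some length, while the paper treats (i) as folklore and puts all the weight on (ii).
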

\begin{proof}Point (i) is clear recalling that $D_\alpha$ is defined on $\kk(y,z)$ by $D_\alpha(y)=y$ and $D_\alpha(z)=\alpha z$.
Concerning point (ii) let $h$ be a non zero element of $L$. 
There exist an integer $n\geqslant 1$, an integer $s$ and a sequence  $(\lambda_j)_{j\geqslant s}$ of elements of $\overline\kk$ with $\lambda_s\not=0$ such that
$h=\sum_{j\geqslant s}\lambda_j z^{j/n}$. If $D_\alpha(h)=h$, then $\alpha\frac sn=1$, a contradiction with the assumption $\alpha\notin\Q$.
\end{proof}

\begin{theorem}\label{mainthm}For all $\alpha,\beta\in\kk\setminus\Q$, the following conditions are equivalent.
\begin{itemize}
\item[(i)] $K(\f g_\alpha)$ and $K(\f g_\beta)$ are isomorphic as valued skewfields.
\item[(ii)] $\alpha$ and $\beta$ are in the same orbit for the homographic action of ${\rm GL}_2(\Z)$ on $\kk\setminus\Q$.
\end{itemize}
\end{theorem}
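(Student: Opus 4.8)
The plan is to prove the two implications separately, with the hard direction being (i) $\Rightarrow$ (ii). The easy direction (ii) $\Rightarrow$ (i) is essentially already done: by Lemma \ref{embedmonomial}, if $\beta = g \cdot \alpha$ for some $g = \left(\begin{smallmatrix}n & q\\ m & r\end{smallmatrix}\right) \in \GL_2(\Z)$, then the isomorphism $\varphi$ sends $y', z'$ to monomials $y^r z^m, y^q z^n$ in $y, z$ (hence of valuation $0$) and $x'$ to $(m\alpha + r)^{-1} x$ (hence of valuation $-1 = v(x')$). So I would check that $\varphi$ respects the filtration by $\deg$ on both sides, i.e. that it is a valued isomorphism in the sense defined above; this is the content promised by Remark \ref{pres}.

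For the hard direction I would work inside the complete skewfields $F(\f g_\alpha) = \kk(y,z)(\!(u;\delta_\alpha)\!)$ and $F(\f g_\beta)$ introduced in \ref{pdo}, since a valued isomorphism $\varphi : K(\f g_\beta) \to K(\f g_\alpha)$ extends uniquely to a continuous $\kk$-isomorphism $F(\f g_\beta) \to F(\f g_\alpha)$. The strategy is to track where $\varphi$ can send the generators $y', z', x'$, exploiting the valuation constraints $v(y') = v(z') = 0$ and $v(x') = -1$ together with the Lie bracket relations $[x', y'] = y'$, $[x', z'] = \beta z'$, $[y', z'] = 0$. First I would analyze the residue level: the value-$0$ elements $\varphi(y'), \varphi(z')$ have well-defined images in the residue field $\kk(y,z)$, and the commutativity $[\varphi(y'), \varphi(z')] = 0$ forces these residues to lie in a commutative subfield. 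Next I would use the eigenvalue relations: the images must be eigenvectors of the derivation $D_\alpha$ (equivalently $\delta_\alpha$) acting on $\kk(y,z)$ with eigenvalues $1$ and $\beta$ respectively, coming from $[\varphi(x'), \varphi(y')] = \varphi(y')$ and $[\varphi(x'), \varphi(z')] = \beta\varphi(z')$. Since the proposition computing the center showed that $\ker D_\alpha = \kk$ when $\alpha \notin \Q$, the $D_\alpha$-eigenspaces in $\kk(y,z)$ are one-dimensional and spanned by the monomials $y^i z^j$ with eigenvalue $i + j\alpha$. Thus I would deduce that $\varphi(y')$ and $\varphi(z')$ have leading terms that are scalar multiples of monomials $y^{a}z^{b}$ and $y^{c}z^{d}$, with $a + b\alpha = 1$ and $c + d\alpha = \beta$.

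The crucial step is then to rule out ``lower-order'' corrections and force the exponents to be integers assembling into a $\GL_2(\Z)$ matrix. Here I expect the main obstacle to lie: a priori $\varphi(y')$ is a full Laurent series in $u$ with coefficients in $\kk(y,z)$, not just its leading monomial, and I must show the higher $u$-terms contribute nothing new and that the leading exponents $a,b,c,d$ are forced to be integers with $\left(\begin{smallmatrix}a & c\\ b & d\end{smallmatrix}\right)$ (or its transpose) invertible over $\Z$. To control the exponents I would pass to the Puiseux extension $L(y)$ of Lemma \ref{puiseux}, where every element of $\kk(y,z)$ embeds and where the eigenvalue equation $D_\alpha(h) = h$ has no nonzero solution; this rigidity, together with the fact that $\alpha \notin \Q$ makes the map $(i,j) \mapsto i + j\alpha$ injective on $\Z^2$, should pin down the exponents uniquely from the eigenvalues $1$ and $\beta$. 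Writing $\beta = c + d\alpha$ and $1 = a + b\alpha$ with the surjectivity of $\varphi$ forcing the inverse to be of the same monomial type, I would conclude that the integer matrix relating the two bases of the exponent lattice lies in $\GL_2(\Z)$, and that it realizes $\beta = g \cdot \alpha$ for the corresponding homography. The delicate bookkeeping will be justifying that the valued isomorphism is \emph{determined} by its residue action on $y', z'$ up to the filtration, so that the combinatorial constraint on exponents is the whole story; this is where completeness of $F(\f g_\alpha)$ and the explicit commutation law $u^{-1} a = au^{-1} - \delta_\alpha(a)$ will be used to propagate the leading-term analysis through all higher terms.
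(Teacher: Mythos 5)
Your reduction to the residue level contains a genuine error, and it is precisely the error that removes the technical heart of the proof. From $[\varphi(x'),\varphi(y')]=\varphi(y')$ and $[\varphi(x'),\varphi(z')]=\beta\varphi(z')$ you conclude that the leading coefficients $y_0,z_0$ of $\varphi(y'),\varphi(z')$ are eigenvectors of $D_\alpha$ with eigenvalues $1$ and $\beta$. This conflates the bracket with $\varphi(x')$ and the bracket with $x$: writing $\varphi(x')^{-1}=c_1u+c_2u^2+\cdots$ in $F(\f g_\alpha)$, identification of the lowest-order terms gives $D_\alpha(y_0)=c_1y_0$ and $D_\alpha(z_0)=\beta c_1z_0$, where $c_1$ is an \emph{unknown nonzero element of $\kk(y,z)$}, not a scalar. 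Check this against Lemma \ref{embedmonomial} itself: there $\varphi(y')=y^rz^m$, which is a $D_\alpha$-eigenvector of eigenvalue $r+m\alpha\neq 1$; the discrepancy is absorbed by $\varphi(x')=\frac{1}{m\alpha+r}x$, i.e.\ $c_1=m\alpha+r$. Your constraint $a+b\alpha=1$ with $a,b\in\Z$ would force $a=1$, $b=0$, hence $\varphi(y')$ has leading term $\lambda y$ and $\beta=c\pm\alpha\in\Z+\alpha\Z$ --- a conclusion so strong that it contradicts the easy direction (ii)$\Rightarrow$(i) already established, e.g.\ for $\beta=1/\alpha$.

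Once the relation is stated correctly, the only usable equation is the quotient $D_\alpha(z_0)/z_0=\beta\,D_\alpha(y_0)/y_0$, in which the unknown $c_1$ cancels, and one must show that any solution pair $y_0,z_0\in\kk(y,z)$ of this logarithmic-derivative equation consists of scalar multiples of monomials in $y,z$. Monomials can no longer be characterized as eigenvectors, since the ``eigenvalue'' $c_1$ is a rational function; this is exactly what your proposal does not address. The paper's proof performs this elimination in two rounds of partial-fraction decomposition: first in $L(y)$ over the Puiseux field $L$ --- this is where Lemma \ref{puiseux}(ii) is really used: a nonzero zero or pole $a_i$ gives $a_i-D_\alpha(a_i)\neq 0$, forcing a matching pole on the other side and the contradiction $q_i=\beta r_j\in\Q$ --- and then in $\overline\kk(z)$ for the leading coefficients $a(z),b(z)$, using $\beta\notin\Q$ again. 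Only after this does one get $z_0=\lambda z^ny^q$, $y_0=\mu z^my^r$, hence $\beta=\frac{n\alpha+q}{m\alpha+r}$, with invertibility of the matrix obtained by applying the same argument to $\varphi^{-1}$. Incidentally, the ``delicate bookkeeping'' you anticipate for propagating the analysis through all higher $u$-terms is unnecessary: the paper's argument uses only the lowest-order identification, and no control of the higher coefficients of $\varphi(y'),\varphi(z'),\varphi(x')^{-1}$ is ever needed.
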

\begin{proof} Lemma \ref{embedmonomial} shows that (ii) implies (i). Conversely suppose that there exists a valued isomorphism of $\kk$-algebras
$\varphi:K(\f g_\beta)\to K(\f g_\alpha)$.  

{\it First step.} As in \ref{plongements} we denote by $x,y,z$ the generators of $K(\f g_\alpha)=\kk(y,z)(x\,;\,D_\alpha)$ and by $x',y',z'$  the generators of $K(\f g_\beta)=\kk(y',z')(x'\,;\,D_\beta)$. The images
\begin{equation}\label{defXYZ}X=\varphi(x'), \ \  Y=\varphi(y'), \ \ Z=\varphi(z')\end{equation}
satisfy the relations $YZ=ZY$, $XY-YX=Y$, $XZ-ZX=\beta Z $ in $K(\f g_\alpha)$. The last two  are equivalent to
\begin{equation}\label{relations}
YX^{-1}-X^{-1}Y=X^{-1}YX^{-1}\quad\text{and}\quad ZX^{-1}-X^{-1}Z=\beta X^{-1}ZX^{-1}.
\end{equation}
By assumption on $\varphi$ we have $v(Y)=v(Z)=0$ and $v(X)=-1$.
Using the embedding of $K(\f g_\alpha)$ in $ F(\f g_\alpha)$ defined in \eqref{UgPDO} and the notations \eqref{UgPDObis} we consider the expansions
\begin{align}
X^{-1}&=c_1 u+c_{2}u^{2}+\cdots,\quad  c_i\in \kk(y,z), \ c_1\not=0,\label{devU}\\
Y&=y_0+y_1u+y_2u^2+\cdots,\quad y_i\in \kk(y,z), \ y_0\not=0,\label{devY}\\
Z&=z_0+z_1u+z_2u^2+\cdots,\quad z_i\in \kk(y,z), \ z_0\not=0.\label{devZ}
\end{align}
Computing in $ F(\f g_\alpha)$ with the usual rules (see for instance section II in \cite{Goo}) and identifying the terms of minimal valuations in the relations \eqref{relations},
we obtain $-c_1\delta_\alpha(y_0)=c_1^2y_0$ and $ -c_1\delta_\alpha(z_0)=\beta c_1^2z_0$.
It follows in particular that we have in $\kk(y,z)$:
\begin{equation}\label{equadiff} 
y_0\notin\kk,\quad z_0\notin\kk\quad\text{and}\quad c_1=\frac1\beta.\frac{D_\alpha(z_0)}{z_0}=\frac{D_\alpha(y_0)}{y_0}.\end{equation}                                                             
We introduce the logarithmic derivation $\Phi_\alpha$ associated to the derivation $D_\alpha$ of $L(y)$. It is defined by 
$\Phi_\alpha(f)=\frac{D_\alpha(f)}{f}$ and satisfies $\Phi_\alpha(fg)=\Phi_\alpha(f)+\Phi_\alpha(g)$ for all non zero elements $f,g$ in $L(y)$.
Hence the differential equation \eqref{equadiff} becomes 
\begin{equation}\label{equadifflog}
\Phi_\alpha(z_0)=\beta\Phi_\alpha(y_0),\quad \text{with }y_0\notin\kk,z_0\notin\kk. 
\end{equation}
{\it Second step.} The element $z_0\in\kk(z)(y)$ is of the form
\begin{equation*}
z_0=a(z)\frac{y^n+g_{n-1}y^{n-1}+\cdots+g_1y+g_0}{y^m+h_{m-1}y^{m-1}+\cdots+h_1y+h_0},\end{equation*}
with $a(z)\in\kk(z)$, $m,n\in\Z_{\geqslant 0}$ and $g_i,h_j\in\kk(z)$. By embedding $\kk(z)$ in the algebraically closed field $L$ defined in Lemma \ref{puiseux} we have in $L(y)$ the factorization
\begin{equation}\label{decz0}
z_0=a(z)\prod_{i=1}^p(y-a_i(z))^{q_i},\end{equation}
where the leading coefficient $a(z)$ is a non zero element of $\kk(z)$, the zeros and poles $a_i(z)$ are pairewise distincts elements in $L$,  
the multiplicities $q_i$ are non zero integers such that $\sum_{i=1}^pq_i=n-m$, and $p$ is a nonnegative integer  (with convention $p=0$ if $z_0\in\kk(z)$). We compute:
\begin{align*}
\Phi_\alpha(z_0)&=\Phi_\alpha(a)+\sum_{i=1}^pq_i\Phi_\alpha(y-a_i)=\Phi_\alpha(a)+\sum_{i=1}^pq_i\frac{D_\alpha(y-a_i)}{y-a_i}\\
&= \Phi_\alpha(a)+\sum_{i=1}^pq_i\frac{y-D_\alpha(a_i)}{y-a_i}=\Phi_\alpha(a)+\sum_{i=1}^pq_i\left(1+\frac{a_i-D_\alpha(a_i)}{y-a_i}\right)\\
&=(q_1 + \cdots+q_p)+\Phi_\alpha(a)+\sum_{i=1}^pq_i\,\frac{a_i-D_\alpha(a_i)}{y-a_i}.\end{align*}Observe that in this sum, the first term is an integer, 
the second one $\Phi_\alpha(a)$ is an element of $\kk(z)$ and the last sum lies in $L(y)$.
Starting similarly from an expression 
\begin{equation}\label{decy0}y_0=b(z)\prod_{j=1}^s(y-b_j(z))^{r_j},\end{equation}
we obtain
$$\Phi_\alpha(y_0)=(r_1 + \cdots+r_s)+\Phi_\alpha(b)+\sum_{j=1}^sr_j\,\frac{b_j-D_\alpha(b_j)}{y-b_j}.$$
We identify the both sides of equation \eqref{equadifflog} considering the expressions  of $\Phi_\alpha(z_0)$ and $\Phi_\alpha(y_0)$ obtained above as their canonical
partial fraction decompositions in $L(y)$.
Assume that there exists  $1\leqslant i\leqslant p$ such that $a_i\not=0$. Then $a_i-D_\alpha(a_i)\not=0$ by assertion (ii) of Lemma \ref{puiseux}. 
It follows that there exists some $1\leqslant j\leqslant s$ such that $b_j\not=0$ and we have in $L(y)$ the equality:
$$ q_i\frac{a_i-D_\alpha(a_i)}{y-a_i}=\beta r_j \frac{b_j-D_\alpha(b_j)}{y-b_j}.$$
This implies $a_i=b_j$ thus $q_i=\beta r_j$ and contradicts the fact that $\beta\notin\Q$. We conclude that all elements $a_i$ and $b_j$ are zero in $L(y)$.
Back to \eqref{decz0} et \eqref{decy0} we have $p=s=1$ and \begin{equation}\label{z0y0}
z_0=a(z)y^q \quad \text{and} \quad y_0=b(z)y^r, \ \text{ with } q,r\in\Z.                                                                           
\end{equation}
where $a$ and $b$ are non zero elements of $\kk(z)$ solutions of the equation\begin{equation}\label{equadiff3}
\displaystyle \Phi_\alpha(a)+q=\beta(\Phi_\alpha(b)+r).\end{equation}
{\it Third step.} The rational functions $a$ and $b$ of $\kk(z)$ can be factorized in $\overline\kk(z)$ as
\begin{equation}\label{ab}a(z)=\lambda\prod_{i=1}^s(z-\lambda_i)^{n_i} \quad\text{and}\quad b(z)=\mu\prod_{j=1}^t(z-\mu_j)^{m_j}\end{equation}
where the leadings coefficients $\lambda$ and $\mu$ are elements of $\kk^\times$, the zeros and poles $\lambda_i$ are pairewise distincts in $\overline\kk$ just like the $\mu_j$'s, 
the exponents $n_i$ and $m_j$ are integers, $s$ and $t$ are nonnegative integers with conventions  $s=0$ if $a\in\kk^\times$ and $t=0$ if $b\in\kk^\times$. Then we compute:
$$\Phi_\alpha(a)=\sum_{i=1}^sn_i\Phi_\alpha(z-\lambda_i)=\sum_{i=1}^sn_i\frac{\alpha z}{z-\lambda_i}=\sum_{i=1}^sn_i\left(\alpha+\frac{\alpha \lambda_i}{z-\lambda_i}\right).$$
Then relation \eqref{equadiff3} implies 
\begin{equation}\label{dces}\alpha\sum_{i=1}^sn_i+q+\sum_{i=1}^s\frac{n_i\alpha \lambda_i}{z-\lambda_i}=
\alpha\beta\sum_{j=1}^tm_j+\beta r+\sum_{j=1}^t\frac{m_j\alpha \beta\mu_j}{z-\mu_j}.\end{equation}
If there exists $1\leqslant i\leqslant s$ such that $\lambda_i\not=0$ then it follows from the unicity of the partial fraction decomposition in $\overline\kk(z)$ 
that there exists $1\leqslant j\leqslant t$ such that  $\lambda_i=\mu_j$ and
$$\frac{n_i\alpha \lambda_i}{z-\lambda_i}=\frac{m_j\alpha \beta\mu_j}{z-\mu_j}=\frac{m_j\alpha \beta\lambda_i}{z-\lambda_i},$$
then $\beta=\frac{n_i}{m_j}\in\Q$ and a contradiction. We deduce that all $\lambda_i$'s and $\mu_j$'s in \eqref{ab} are zero in $\overline\kk$, then  $s=t=1$ and 
$a(z)=\lambda z^{n_1}$ et $b(z)=\mu z^{m_1}$. Denoting simply $n=n_1$ and $m=m_1$ we conclude with \eqref{z0y0} that
\begin{equation}\label{z0y0end}z_0=\lambda z^ny^q \  \text{ and } \ y_0=\mu z^my^r.\end{equation}
{\it Fourth step.} Equality \eqref{dces} reduces to \(\alpha n+q=\beta(\alpha m+r).\) 
If $\alpha m+r=0$, then $m=r=0$ since $\alpha\notin\Q$ and similarly $n=q=0$: this is impossible by \eqref{z0y0end} because $y_0\notin\kk$ and $z_0\notin\kk$  in \eqref{equadifflog}. 
We conclude that $\beta=\frac{n\alpha+q}{m\alpha +r}$.
The property of the matrix with entries $n,q,m,r$ to be invertible follows from the same reasoning applied to the isomorphism $\varphi^{-1}$.\end{proof}

\begin{corollary}\label{corell}We suppose that $\kk=\C$.
The valued isomorphism classes of the skewfields $K(\mathfrak g_\alpha)$ for $\alpha\in\R\setminus\Q$ are in one-to-one correspondence with the sets 
of irrational real numbers having the same continued fraction development after a certain point. 
The valued isomorphism classes of the skewfields $K(\mathfrak g_\alpha)$ for $\alpha\in\C\setminus\R$ are in one-to-one correspondence with the isomorphism classes of complex elliptic curves.
\end{corollary}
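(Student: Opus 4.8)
The plan is to deduce this corollary by feeding the two arithmetic descriptions collected in Remark \ref{ell} into the valued classification of Theorem \ref{mainthm}. First I would invoke Theorem \ref{mainthm} in the case $\kk=\C$: since the prime subfield of $\C$ is $\Q$, the theorem asserts that for $\alpha,\beta\in\C\setminus\Q$ the skewfields $K(\f g_\alpha)$ and $K(\f g_\beta)$ are isomorphic as valued skewfields precisely when $\alpha$ and $\beta$ lie in the same orbit under the homographic action of $\GL_2(\Z)$ on $\C\setminus\Q$. Consequently the set of valued isomorphism classes of the $K(\f g_\alpha)$ is canonically in bijection with the orbit space $(\C\setminus\Q)/\GL_2(\Z)$, and it only remains to describe this orbit space.

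Next I would observe that the $\GL_2(\Z)$-action respects the partition $\C\setminus\Q=(\R\setminus\Q)\sqcup(\C\setminus\R)$: a homography $\alpha\mapsto\frac{n\alpha+q}{m\alpha+r}$ with integer entries sends a real irrational to a real irrational and a non-real number to a non-real number. Hence every orbit is entirely contained in $\R\setminus\Q$ or entirely contained in $\C\setminus\R$, and the orbit space splits as the disjoint union $(\R\setminus\Q)/\GL_2(\Z)\ \sqcup\ (\C\setminus\R)/\GL_2(\Z)$, which I would treat separately.

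For the real part, I would simply quote the continued-fraction statement recalled in Remark \ref{ell} (coming from \cite[Theorem 175]{HW}): the orbit of $\alpha\in\R\setminus\Q$ under $\GL_2(\Z)$ is exactly the set of irrational reals having the same continued fraction development after a certain point. This yields the first asserted one-to-one correspondence. For the non-real part, I would use the reduction already carried out in Remark \ref{ell}: tracking the sign of the imaginary part of $g\cdot\alpha$ (the product of the sign of the imaginary part of $\alpha$ with $\det g$) produces a bijection between $(\C\setminus\R)/\GL_2(\Z)$ and $\HH/\SL_2(\Z)$, and the classical identification (via the lattice correspondence and the $j$-invariant, \cite{JPS}) of $\HH/\SL_2(\Z)$ with the set of isomorphism classes of complex elliptic curves delivers the second correspondence.

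I do not expect a genuine computational obstacle here, since all the arithmetic content is already packaged in Remark \ref{ell} and the equivalence in Theorem \ref{mainthm}. The only point deserving care is the transition in the non-real case from the full $\GL_2(\Z)$-action on $\C\setminus\R$ to the $\SL_2(\Z)$-action on $\HH$: one must check that the determinant-$(-1)$ elements of $\GL_2(\Z)$ interchange the upper and lower half-planes, so that restricting to representatives in $\HH$ replaces $\GL_2(\Z)$-orbits by $\SL_2(\Z)$-orbits without loss or collision. This is precisely the verification sketched in Remark \ref{ell}, and I would cite it directly rather than reprove it.
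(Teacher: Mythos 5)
Your proposal is correct and follows exactly the paper's own route: the paper's proof is the one-line statement that the corollary ``follows directly from Theorem \ref{mainthm} and Remark \ref{ell}'', and your argument is precisely this deduction, with the useful (implicit in the paper) details spelled out --- the $\GL_2(\Z)$-stability of the partition $\C\setminus\Q=(\R\setminus\Q)\sqcup(\C\setminus\R)$ and the passage from $\GL_2(\Z)$-orbits on $\C\setminus\R$ to $\SL_2(\Z)$-orbits on $\HH$.
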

\begin{proof}Follows directly from Theorem \ref{mainthm} and Remark \ref{ell}.
\end{proof}

\begin{remark}\label{pres}The valued isomorphisms of Lemma \ref{embedmonomial} satisfy the particular property
of stabilizing the subfield $\kk(y,z)$. However composing such an isomorphism by an inner automorphism of $K(\f g_\alpha)$ gives rise to a valued isomorphism which does not necessarily stabilize $\kk(y,z)$.
\end{remark}

\begin{remark}
It can be proved (by arguments similar to those used in \cite{FDFM}, Proposition 1.1.6, or \cite{AD95}, Theorem 2.3) that any isomorphism between two skewfields of formal 
pseudodifferential operators necessarily preserves the valuations. Without detailing here the proof of this general result, we observe that it implies that the equivalent 
conditions (i) and (ii) of Theorem \ref{mainthm} are also equivalent to the property of  $F(\f g_\alpha)$ and $F(\f g_\beta)$ to be $\kk$-isomorphic.\end{remark}

%%%%%%%%%%%%%%%%%%%%%%%%%%%%%%%%%%%%%%%%%%%%%%%%%%%%%%%%%%%%%%%%%%%%%%%%%%%%%%%%%%%%%%%%%%%%%%%%%%%%

\section{Enveloping skewfield of the Lie algebra $\f  q$}
\label{qq}

\subsection{Notations}\label{notq}In this section,  $\kk$ is an arbitrary field. We consider the Lie algebra $\f q$ over $\kk$ whose brackets on a basis $\{x,y,z\}$ are 
defined by relations \eqref{brakq}. The enveloping algebra $U(\f q)$ is the associative $\kk$-algebra generated by three generators  $x,y,z$ and relations
\begin{equation}\label{Uqrel}yz=zy,\quad xy-yx=y,\quad xz-zx=y+ z.\end{equation}
It can be viewed as the iterated Ore extension
\begin{equation}\label{UqOre}U(\f q)=\kk[y,z][x\,;\,\Delta],\quad \text{where } \Delta=y\partial_y+(y+ z)\partial_z.\end{equation}
Denoting again by $\Delta$ the extension of $\Delta$ to a $\kk$-derivation of the field $\kk(y,z)$, the skewfield of fractions of $U(\f q)$ is 
\begin{equation}\label{UqK}K(\f q)=\kk(y,z)(x\,;\,\Delta),
\end{equation}
and as in \ref{pdo}, it can be embedded in the local skewfield
\begin{equation}\label{UqF} F(\f q)=\kk(y,z)(\!(u\,;\,-\Delta)\!) \ \ \text{with } u=x^{-1}.
\end{equation}
In order to simplify the commutation relations \eqref{Uqrel}, we set
\begin{equation}t=y^{-1}z,\end{equation}
which satisfies
\begin{equation}\label{kyt}\kk(y,z)=\kk(y,t)\quad\text{and}\quad \Delta=y\partial_y+\partial_t.\end{equation}
Hence \eqref{UqK} and \eqref{UqF} become
\begin{equation}\label{UqKF}K(\f q)=\kk(y,t)(x\,;\,\Delta) \ \subset \  F(\f q)=\kk(y,t)(\!(u\,;\,-\Delta)\!)\end{equation}
 with $u=x^{-1}$ and commutation relations  
\begin{equation}\label{Vqrel}yt=ty,\quad xy-yx=y,\quad xt-tx=1.\end{equation}

\subsection{Center and Gelfand-Kirillov property}
The study splits into two cases depending on the characteristic.
We start with the following lemma.
\begin{lemma}\label{theker}The kernel of the derivation $\Delta$ of $\kk(y,z)$ is equal to $\kk$ when~$\kk$ is of characteristic zero, and to $\kk(y^\ell,z^\ell)$ when $\kk$ is of characteristic $\ell>0$.
\end{lemma}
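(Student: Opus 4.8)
The plan is to work throughout in the coordinates of \eqref{kyt}, where $\kk(y,z)=\kk(y,t)$ with $t=y^{-1}z$ and $\Delta=y\partial_y+\partial_t$, so that $\Delta(y)=y$ and $\Delta(t)=1$. In this form the inclusions to be proved in reverse are immediate: in characteristic zero $\Delta(\lambda)=0$ for $\lambda\in\kk$, and in characteristic $\ell$ one has $\Delta(y^\ell)=\ell y^\ell=0$ and $\Delta(t^\ell)=\ell t^{\ell-1}=0$, whence $\kk(y^\ell,t^\ell)=\kk(y^\ell,z^\ell)\subseteq\ker\Delta$ since $z^\ell=y^\ell t^\ell$. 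The whole content is the opposite inclusion, and I would treat the two characteristics by genuinely different arguments.

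For positive characteristic the key observation is that $\Delta^\ell=y\partial_y$. Indeed, in characteristic $\ell$ the $\ell$-th power of a derivation is again a derivation, and two derivations of $\kk(y,t)$ that agree on the generators coincide; since $\Delta^k(y)=y$ and $\Delta^k(t)=0$ for every $k\geq 2$, we get $\Delta^\ell(y)=y=y\partial_y(y)$ and $\Delta^\ell(t)=0=y\partial_y(t)$, so $\Delta^\ell=y\partial_y$. Now if $\Delta f=0$ then $y\partial_y f=\Delta^\ell f=0$, hence $\partial_y f=0$, and the computation of $\ker(d/dX)$ carried out inside the proof of assertion (ii) of Lemma \ref{intersection} (applied with $X=y$ over the base field $\kk(t)$) gives $f\in\kk(t)(y^\ell)=\kk(y^\ell,t)$. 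On $\kk(y^\ell,t)$ the operator $y\partial_y$ vanishes, so $\Delta$ restricts there to $\partial_t$; thus $\partial_t f=0$, and a second application of Lemma \ref{intersection} (now with $X=t$ over $\kk(y^\ell)$) yields $f\in\kk(y^\ell,t^\ell)=\kk(y^\ell,z^\ell)$, as desired.

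For characteristic zero I would show that $\Delta$ admits no nonconstant rational first integral by controlling the poles of $f$ in the variable $t$. Extend $\Delta$ to a derivation of $\mathbb{K}(t)$, where $\mathbb{K}=\overline\kk$-here rather the algebraic closure of $\kk(y)$, which in characteristic zero is separable so that $y\partial_y$ extends to it unambiguously. Writing $f\in\ker\Delta$ in its partial fraction decomposition over $\mathbb{K}$, namely $f=g(t)+\sum_{\gamma,l}d_{\gamma,l}(t-\gamma)^{-l}$ with $g\in\mathbb{K}[t]$ and poles $\gamma\in\mathbb{K}$, one applies $\Delta$ term by term using $\Delta\big((t-\gamma)^{-l}\big)=-l(1-\Delta\gamma)(t-\gamma)^{-l-1}+\Delta(d_{\gamma,l})(t-\gamma)^{-l}$. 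Reading off, for a pole $\gamma$ of maximal order $L$, the coefficient of $(t-\gamma)^{-L-1}$ in $\Delta f=0$ forces $-L\,d_{\gamma,L}(1-\Delta\gamma)=0$, that is $\Delta\gamma=y\gamma'=1$.

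The crux, and the step I expect to be the main obstacle, is then the assertion that $y\gamma'=1$ is impossible for $\gamma$ algebraic over $\kk(y)$ (and, at the final step, that $y\eta'=\lambda$ is impossible for $\eta\in\kk(y)$ and $\lambda\in\kk^\times$). I would settle this uniformly by a Puiseux expansion at $y=0$: writing $\gamma=\sum_{i\geq i_0}e_i\,y^{i/N}$, one gets $y\gamma'=\sum_i\tfrac{i}{N}e_i\,y^{i/N}$, whose coefficient of $y^0$ is forced to be $0$, so $y\gamma'$ can never be a nonzero constant. This rules out all poles, so $f=g(t)\in\kk(y)[t]$; comparing top $t$-degree coefficients shows the leading coefficient lies in $\ker(y\partial_y)=\kk$, and the same Puiseux obstruction applied to the next coefficient forces the $t$-degree to be $0$. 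Hence $f\in\ker(y\partial_y)=\kk$, completing the characteristic zero case.
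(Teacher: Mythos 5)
Your proof is correct, but both halves take genuinely different routes from the paper's argument, which treats the two characteristics uniformly: the paper expands $f\in\ker\Delta$ in $\kk(t)(\!(y)\!)$ as $f=\sum_{j}a_jy^j$, reduces $\Delta(f)=0$ to the family of equations $\partial_t(a_j)=-ja_j$ in $\kk(t)$, then factors each $a_j$ over $\overline\kk(t)$ and takes logarithmic derivatives, so that uniqueness of partial fractions forces $j$ and all multiplicities to vanish in $\kk$; in characteristic zero this gives $f\in\kk$ at once, while in characteristic $\ell$ it gives $f\in\kk(t^\ell)(\!(y^\ell)\!)$ and Lemma \ref{lemmecomm} concludes. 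Your positive-characteristic argument is slicker and avoids series altogether: the identity $\Delta^\ell=y\partial_y$ (an $\ell$-th power of a derivation is again a derivation in characteristic $\ell$, and the two sides agree on the generators $y$ and $t$) reduces everything to two applications of the one-variable computation $\ker(d/dX)\cap K(X)=K(X^\ell)$ contained in the proof of assertion (ii) of Lemma \ref{lemmecomm}; this is a nice structural shortcut that the paper does not exploit. Your characteristic-zero argument instead performs partial fractions in $t$ over $\overline{\kk(y)}$ and eliminates poles by showing that $y\gamma'=1$ has no solution algebraic over $\kk(y)$, via Puiseux expansion at $y=0$; this is sound, and is closer in spirit to the machinery the paper deploys later (assertion (ii) of Lemma \ref{puiseux} and the second step of the proof of Theorem \ref{mainthm}) than to its proof of the present lemma, but it invokes heavier tools (Puiseux's theorem and the unique extension of derivations to $\overline{\kk(y)}$) where the paper needs only Laurent expansion in $y$ and partial fractions over $\overline\kk$. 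In short, you trade the paper's single uniform computation for a cleaner, purely algebraic argument in characteristic $\ell$ and a somewhat heavier one in characteristic zero; both trades are legitimate.
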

\begin{proof}Let $f$ be a non zero element of $\kk(y,z)$ such that $\Delta(f)=0$. By \eqref{kyt} we can consider the expansion of $f$ in $\kk(t)(\!(y)\!)$ as $f=\sum_{j\geqslant j_0}a_jy^j$ with $a_j\in\kk(t)$ for all integers $j\geqslant j_0$.
Applying $\Delta$ we deduce that $\sum_{j\geqslant j_0}(\Delta(a_j)+ja_j)y^j=0$. 
Since the restriction of $\Delta$ to $\kk(t)$ is the ordinary derivative $\partial_t$, this leads for any $j$ in the support of $f$ to the differential equation 
\begin{equation}\label{equadiff5}\partial_t(a_j)=-ja_j\quad\text{with} \ a_j\in\kk(t), a_j\not=0.
\end{equation}
Denoting by $\overline{\kk}$ an algebraic closure of $\kk$, the rational function $a_j$ can be factorized in $\overline\kk(t)$ as $a_j(t)=\lambda\prod_{i=1}^s(t-\lambda_i)^{n_i}$ 
where $\lambda\in\kk^\times$, $s\geqslant 0$ (with convention  $s=0$ if $a_j\in\kk^\times$), $n_i\in\Z$, the zeros and poles $\lambda_i$ pairewise distincts in $\overline\kk$.
Applying the logarithmic derivation $\Psi$ associated to the canonical extension of $\partial_t$ to $\overline\kk(t)$, we compute
$$\Psi(a_j)=\sum_{i=1}^sn_i\Psi (t-\lambda_i)=\sum_{i=1}^sn_i\frac{1}{t-\lambda_i}.$$
By  \eqref{equadiff5}, we deduce that $j$ and all $n_i$ vanish in $\kk$. When $\kk$ is of characteristic zero, it follows that $f\in\kk$. When $\kk$ is of characteristic $\ell>0$, 
we obtain $f\in\kk(t^\ell)(\!(y^\ell)\!)$ and conclude using assertion (ii) of Lemma \ref{lemmecomm} that $f\in\kk(t^\ell,y^\ell)$. Then the proof is complete recalling \eqref{kyt}.\end{proof}
In the case of the zero characteristic this result is proved in \cite[Proposition 1.2.4.1]{LR}) by a different method based on
\cite[Theorem 2.1]{Now}.

\begin{proposition}\label{GKqzero}If $\kk$ is of characteristic zero, then the center $C(\f q)$ of $K(\f q)$ is equal to $\kk$, and the Lie algebra $\f q$ does not satisfy the Gelfand-Kirillov property.
\end{proposition}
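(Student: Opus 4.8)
The plan is to compute the center $C(\f q)$ of $K(\f q)$ directly using the results already assembled, and then deduce the failure of the Gelfand-Kirillov property by a dimension/transcendence-degree argument analogous to the one used in Corollary \ref{isoaq} for $\f g_\alpha$ with $\alpha\notin\Q$. First I would invoke Theorem 5.6 of \cite{Goo} exactly as in the proof of the Proposition in \ref{zc}: since $K(\f q)=\kk(y,z)(x\,;\,\Delta)$ by \eqref{UqK}, this theorem identifies the center $C(\f q)$ with $\ker\Delta$ computed in $\kk(y,z)$. By Lemma \ref{theker}, in characteristic zero this kernel is exactly $\kk$. This immediately gives $C(\f q)=\kk$.

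For the second assertion, the strategy is to show that $K(\f q)$ cannot be $\kk$-isomorphic to the Weyl skewfield $\mathcal D_{1,1}(\kk)$, which by the discussion in \ref{prelimGK} is the only possible target for a $3$-dimensional non-abelian Lie algebra satisfying the Gelfand-Kirillov property. I would argue that $\mathcal D_{1,1}(\kk)$ has center $\kk(t)$, a rational function field of transcendence degree $1$ over $\kk$, whereas we have just shown $C(\f q)=\kk$, of transcendence degree $0$. Since any $\kk$-isomorphism of skewfields restricts to a $\kk$-isomorphism of their centers, the mismatch in transcendence degree is a clean obstruction: $K(\f q)\not\cong\mathcal D_{1,1}(\kk)$, so $\f q$ fails the Gelfand-Kirillov property.

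I expect the main (indeed only) obstacle to be purely bookkeeping: making sure the center of $\mathcal D_{1,1}(\kk)$ is correctly identified as $\kk(t)$ in characteristic zero. This is standard, since $A_1(L)$ over $L=\kk(t)$ has center $L=\kk(t)$ in characteristic zero (the Weyl algebra is central over its base field), and passing to the skewfield of fractions does not enlarge the center. Once this is in hand, the contrast $\operatorname{tr.deg}_\kk C(\f q)=0\neq 1=\operatorname{tr.deg}_\kk C(\mathcal D_{1,1}(\kk))$ finishes the argument with no further computation.

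One subtlety worth flagging is that I am using the full strength of $C(\f q)=\kk$ rather than merely a comparison of dimensions over the center as in the prime-characteristic cases; this is precisely because in characteristic zero $K(\f q)$ is infinite-dimensional over its center and the Brauer-group machinery of Section \ref{CGKell} is unavailable. The transcendence-degree argument is the natural replacement and mirrors exactly the structure of the proof of Corollary \ref{isoaq}, so the two characteristic-zero non-Gelfand-Kirillov results ($\f g_\alpha$ for $\alpha\notin\Q$ and $\f q$) are established by the same mechanism.
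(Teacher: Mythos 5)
Your proposal is correct and follows essentially the same route as the paper: the paper also obtains $C(\f q)=\ker\Delta=\kk$ by combining \cite[Theorem 5.6]{Goo} with Lemma \ref{theker}, and then rules out isomorphism with $\mathcal D_{1,1}(\kk)$ by ``a dimensional argument already used in the proof of Corollary \ref{isoaq}'' --- precisely the obstruction you make explicit, namely that a $\kk$-isomorphism would identify the center $\kk$ (transcendence degree $0$) with the center $\kk(t)$ of $\mathcal D_{1,1}(\kk)$ (transcendence degree $1$). Your write-up simply spells out what the paper leaves as a back-reference.
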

\begin{proof} By \eqref{UqOre} and \eqref{UqK} it follows from  \cite[Theorem 5.6]{Goo} that $C(\f  q)$ is the kernel of the derivation $\Delta$ of $\kk(y,z)$. 
We conclude with Lemma \ref{theker} that $C(\f  q)=\kk$, and then $K(\f q)$ cannot be isomorphic to a Weyl skewfield $\mathcal D_{1,1}(\kk)$  by a dimensional argument already used in the proof of Corollary \ref{isoaq}.\end{proof}

\begin{theorem}\label{GKql}We suppose that $\kk$ is of characteristic $\ell >0$.
\begin{itemize}
 \item[(i)] \label{zellq}
The center $C(\f q)$ of $K(\f q)$ is equal to $\kk(y^\ell,z^\ell,(x^\ell -x)^\ell)$.
In particular, $K(\f q)$ is of dimension $\ell^4$ over its center.
 \item[(ii)]\label{Daq}
The Lie algebra $\f q$ does not satisfy the Gelfand-Kirillov property.
\end{itemize}
\end{theorem}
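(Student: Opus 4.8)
The plan is to follow closely the proof of Theorem \ref{GKlbis}, replacing the central element $c$ by $(x^\ell-x)^\ell$ and using the substitution $t=y^{-1}z$ of \eqref{kyt} to linearise the awkward bracket $\Delta(z)=y+z$.

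\textbf{Central elements.} First I would exhibit three central elements generating a subfield $C_\ell=\kk(y^\ell,z^\ell,(x^\ell-x)^\ell)$ of $C(\f q)$. By Lemma \ref{theker} the kernel of $\Delta$ on $\kk(y,z)$ is $\kk(y^\ell,z^\ell)$, so $y^\ell$ and $z^\ell$ are central. For the third element I work with the generators $x,y,t$ and relations \eqref{Vqrel}. From $[x,y]=y$ one gets $x^\ell y=y(x+1)^\ell=y(x^\ell+1)$, hence $[x^\ell-x,y]=0$; from the Weyl-type relation $[x,t]=1$ one gets $[x^\ell,t]=\ell x^{\ell-1}=0$, hence $[x^\ell-x,t]=-1$. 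Writing $w=x^\ell-x$, the bracket $[w,t]=-1$ is a central scalar, so $[w^\ell,t]=\ell(-1)w^{\ell-1}=0$, while $[w^\ell,y]=0$. Thus $w^\ell=(x^\ell-x)^\ell$ commutes with $y$ and $t$, hence with $z=yt$ and with $x$, and is therefore central. This gives $C_\ell\subseteq C(\f q)$.

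\textbf{Dimension count.} Next I compute $[K(\f q):C_\ell]=\ell^4$ through the centralizer of $x$. By Lemma \ref{theker} and \cite[Theorem 5.8]{Goo}, the centralizer $\mathcal C(x)$ of $x$ in $K(\f q)=\kk(y,z)(x\,;\,\Delta)$ is the commutative field $\kk(x,y^\ell,z^\ell)$, of codimension $[\kk(y,z):\kk(y^\ell,z^\ell)]=\ell^2$ in $K(\f q)$. Since in characteristic $\ell$ one has $(x^\ell-x)^\ell=x^{\ell^2}-x^\ell$, a polynomial of degree $\ell^2$ in $x$, the subfield $C_\ell=\kk(y^\ell,z^\ell)(x^{\ell^2}-x^\ell)$ of the rational function field $\mathcal C(x)=\kk(y^\ell,z^\ell)(x)$ satisfies $[\mathcal C(x):C_\ell]=\ell^2$. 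Multiplying, $[K(\f q):C_\ell]=[K(\f q):\mathcal C(x)]\,[\mathcal C(x):C_\ell]=\ell^2\cdot\ell^2=\ell^4$.

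\textbf{Conclusion.} To finish (i) I argue exactly as in Theorem \ref{GKlbis}. Since $C_\ell\subseteq C(\f q)$, the dimension $[K(\f q):C(\f q)]$ is a perfect square dividing $\ell^4$. As $C(\f q)\subseteq\mathcal C(x)$ with strict inclusion (because $x\notin C(\f q)$, since $[x,y]=y\neq0$), its index satisfies $[K(\f q):C(\f q)]>[K(\f q):\mathcal C(x)]=\ell^2$. A perfect square dividing $\ell^4$ and strictly larger than $\ell^2$ must equal $\ell^4$, whence $[K(\f q):C(\f q)]=\ell^4=[K(\f q):C_\ell]$ and $C(\f q)=C_\ell$. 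Assertion (ii) then follows immediately: a Weyl skewfield $\mathcal D_{1,1}(\kk)$ has dimension $\ell^2$ over its center by \cite[Proposition 1.1.3]{Bois}, so $K(\f q)$, being of dimension $\ell^4$ over its center, cannot be $\kk$-isomorphic to $\mathcal D_{1,1}(\kk)$, and $\f q$ fails the Gelfand-Kirillov property. I expect the only genuinely new point, compared with the diagonal case $\f g_\alpha$, to be the verification that $(x^\ell-x)^\ell$ is central: since $z$ is not an eigenvector of $\ad_x$, one must pass to $t=y^{-1}z$ and exploit the Weyl relation $[x^\ell-x,t]=-1$ to kill the bracket after raising to the $\ell$-th power; everything else transcribes the $\f g_\alpha$ argument.
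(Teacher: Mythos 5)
Your proposal is correct and follows essentially the same route as the paper: you establish $C_\ell=\kk(y^\ell,z^\ell,(x^\ell-x)^\ell)\subseteq C(\f q)$ via the substitution $t=y^{-1}z$ and the relations $[x^\ell-x,y]=0$, $[x^\ell-x,t]=-1$, identify the centralizer $\mathcal C(x)=\kk(x,y^\ell,z^\ell)$ via \cite[Theorem 5.8]{Goo} and Lemma \ref{theker}, and then force $[K(\f q):C(\f q)]=\ell^4$ by the square-divisor argument using $x\notin C(\f q)$, exactly as in the paper's proof (your step making $[K(\f q):C_\ell]=\ell^4$ explicit via the tower $C_\ell\subseteq\mathcal C(x)\subseteq K(\f q)$ is only a minor amplification of what the paper leaves implicit). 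Assertion (ii) is also disposed of identically, by comparing with the dimension $\ell^2$ of $\mathcal D_{1,1}(\kk)$ over its center.
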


\begin{proof} We start computing  the centralizer $\mathcal C(x)$ of $x$ in $K(\f q)=\kk(y,z)(x\,;\,\Delta)$.
Applying \cite[Theorem 5.8]{Goo} we deduce that $\mathcal C(x)=F(x)$ for $F$ the kernel of $\Delta$. Then Lemma \ref{theker} implies that $\mathcal C(x)$ is the commutative subfield $\kk(x,y^\ell,z^\ell)$.
Moreover routine inductions based on relations \eqref{Uqrel} show that 
\begin{equation}\label{comreli}xy^i=y^i(x+i),\quad x^iy=y(x+1)^i,\quad x^it-tx^i=ix^{i-1}\end{equation}
for any $i\geqslant 1$ and then
\begin{equation}\label{comrell}(x^\ell-x)y=y(x^\ell-x)\quad\text{and}\quad(x^\ell-x)t=t(x^\ell-x)-1.\end{equation}
Hence $(x^\ell-x)^\ell$ is central in $K(\f q)$ and denoting $C_\ell=\kk(y^\ell,z^\ell,(x^\ell-x)^\ell)$ we have the following inclusions:
\begin{equation}C_\ell\subseteq C(\f q)\subseteq \mathcal C(x)\subseteq K(\f q).\end{equation}
It follows that the dimension $d$ of $K(\f q)$ over its center $C(\f q)$ divides $\ell^4$, thus can be equal to $1$, $\ell^2$ or $\ell^4$. 
The case $d=1$ is obviously excluded because $K(\f q)$ is non commutative. Suppose that $d=\ell^2$. Then
$$[K(\f q):\mathcal C(x)][\mathcal C(x):C(\f q)]=\ell^2.$$
Since $\mathcal C(x)=\kk(x,y^\ell,z^\ell)$, we have $[K(\f q):\mathcal C(x)]=\ell^2$. Thus $[\mathcal C(x):C(\f q)]=1$ which is impossible because $x\notin C(\f q)$. 
We conclude that $d=\ell^4$. In other words $C(\f q)=C_\ell$ and point (i) is proved.
Assertion (ii) follows by the same dimensional argument as in Theorem \ref{GKlbis}.\end{proof}

\begin{corollary}\label{sepqga}For any commutative field $\kk$ and any element $\alpha$ of the prime subfield $\kk_0$ of $\kk$, the skewfields $K(\f g_\alpha)$ and $K(\f q)$ are not isomorphic.\end{corollary}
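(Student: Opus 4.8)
The plan is to argue by separating the two cases according to the characteristic of $\kk$, and in each case to produce a $\kk$-algebra isomorphism invariant on which $K(\f g_\alpha)$ and $K(\f q)$ differ. The only structural fact I rely on is that any isomorphism of skewfields carries the center onto the center; consequently both the Gelfand-Kirillov property (being $\kk$-isomorphic to $\mathcal D_{1,1}(\kk)$) and the dimension of a skewfield over its center are preserved under $\kk$-isomorphism. Note that since $\f g_\alpha$ is defined only for $\alpha\in\kk^\times$, the hypothesis $\alpha\in\kk_0$ means here $\alpha\in\kk_0^\times$.

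First I would treat the characteristic zero case, where $\kk_0=\Q$ and hence $\alpha\in\Q$. By Corollary \ref{isoaq}, $\f g_\alpha$ satisfies the Gelfand-Kirillov property, so $K(\f g_\alpha)$ is $\kk$-isomorphic to the Weyl skewfield $\mathcal D_{1,1}(\kk)$. On the other hand, Proposition \ref{GKqzero} shows that $\f q$ does not satisfy the Gelfand-Kirillov property. If $K(\f g_\alpha)$ and $K(\f q)$ were isomorphic, then $K(\f q)$ would be $\kk$-isomorphic to $\mathcal D_{1,1}(\kk)$ as well, contradicting Proposition \ref{GKqzero}. This disposes of the characteristic zero case.

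Next I would treat the positive characteristic case, where $\kk_0=\FF_\ell$ and therefore $\alpha\in\FF_\ell^\times$, which is exactly the hypothesis of Theorem \ref{GKl}. Here I invoke the dimension-over-center computations directly: by Theorem \ref{GKl}(ii) the skewfield $K(\f g_\alpha)$ is of dimension $\ell^2$ over its center, whereas by Theorem \ref{GKql}(i) the skewfield $K(\f q)$ is of dimension $\ell^4$ over its center. Since this dimension is an isomorphism invariant and $\ell^2\neq\ell^4$, the two skewfields cannot be $\kk$-isomorphic.

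I expect no genuine obstacle in this argument: once the invariants established in the earlier sections are available, the statement follows by merely reading off and comparing them. The single point requiring minimal care is the standing remark that a $\kk$-algebra isomorphism of skewfields respects the center, hence preserves both the Gelfand-Kirillov property and the dimension over the center; everything else reduces to the already-proved values.
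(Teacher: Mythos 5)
Your proposal is correct and is essentially the paper's own proof: the paper simply states that the corollary ``follows directly from Corollary \ref{isoaq}, Proposition \ref{GKqzero} and Theorems \ref{GKl} and \ref{GKql},'' and your two-case argument (characteristic zero via the Gelfand-Kirillov property, positive characteristic via the dimension $\ell^2$ versus $\ell^4$ over the center) is precisely the intended way of combining those four results. Your explicit remark that a $\kk$-isomorphism carries center to center, hence preserves both invariants, is the only detail the paper leaves implicit.
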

\begin{proof}Follows directly from Corollary \ref{isoaq}, Proposition \ref{GKqzero} and Theorems \ref{GKl} and \ref{GKql}.
\end{proof}

As in Section \ref{CGKell} for $K(\f g_\alpha)$, the following proposition describes the structure of $K(\f q)$ over its center.
\begin{proposition}
\label{strucq}
We suppose that $\kk$ is of characteristic $\ell >0$.
Let $L$ be the skewfield generated by $t$, $y^\ell$ and $x^\ell-x$ in $K(\f q)$.
\begin{itemize}
 \item[(i)] The center of $L$ is $C(\f q)$.
 \item[(ii)] The centralizer of $L$ in $K(\f q)$ is the skewfield $L'$ generated by $y$, $t^\ell$ and $x^\ell$.
 \item[(iii)] $L$ and $L'$ are both isomorphic to a Weyl skewfield $\mathcal D_{1,1}(\kk)$.
 \item[(iv)] $K(\f  q)$ is isomorphic to the tensor product of $L$ and $L'$ over $C(\f q)$.
\item[(v)] The class $[K(\f q)]$ of $K(\f q)$ in the Brauer group of $C(\f q)$ is of order $\ell$.
\end{itemize}
\end{proposition}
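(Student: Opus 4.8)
The whole proposition runs exactly parallel to Proposition~\ref{struc1} and Corollary~\ref{brauer}, so the plan is to replay that strategy, replacing the relations of $\f g_\alpha$ by those of $\f q$ recorded in \eqref{Vqrel}, \eqref{comreli} and \eqref{comrell}, and using the center $C(\f q)=\kk(y^\ell,z^\ell,(x^\ell-x)^\ell)$ computed in Theorem~\ref{GKql}. First I would rewrite everything with the convenient generators $y,t,x$ of \eqref{UqKF}: since $y$ and $t$ commute and $z=ty$, one has $z^\ell=t^\ell y^\ell$, hence $C(\f q)=\kk(y^\ell,t^\ell,(x^\ell-x)^\ell)$, a field visibly contained in $L$.

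Next I would establish (iii), as it immediately supplies the dimension $\ell^2$ needed later. Inside $L$, relation \eqref{comrell} gives $[t,x^\ell-x]=1$, while $y^\ell$ is central (it commutes with $t$ and, by $[x^\ell-x,y]=0$, with $x^\ell-x$); thus $(t,x^\ell-x)$ is a Weyl pair over the central indeterminate $y^\ell$ and $L\cong\mathcal D_{1,1}(\kk)$. Inside $L'$, the identity $x^\ell y=y(x+1)^\ell=y(x^\ell+1)$ coming from \eqref{comreli} yields $[x^\ell,y]=y$; setting $x''=x^\ell y^{-1}$ turns this into $[x'',y]=1$, and since $[x^\ell,t]=\ell x^{\ell-1}=0$ forces $t^\ell$ to be central, one gets $L'\cong\mathcal D_{1,1}(\kk)$ as well. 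By \cite[Proposition 1.1.3]{Bois} each of $L$ and $L'$ then has dimension $\ell^2$ over its center.

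For (i) I would copy the centralizer computation of Proposition~\ref{struc1}(i): the centralizer $\mathcal C_L(t)$ contains $\kk(t,y^\ell,(x^\ell-x)^\ell)$, is proper in $L$ because $x^\ell-x$ does not commute with $t$, and equals it by a degree argument; symmetrically $\mathcal C_L(x^\ell-x)=\kk(x^\ell-x,y^\ell,t^\ell)$. Intersecting gives $C(L)\subseteq\kk(y^\ell,t^\ell,(x^\ell-x)^\ell)=C(\f q)$, and the reverse inclusion is clear, so $C(L)=C(\f q)$. Then $L$ is a central simple $C(\f q)$-subalgebra of $K(\f q)$ of dimension $\ell^2$, and the Double Centralizer Theorem \cite[Theorem 12.7]{Pierce} yields both (ii) — its centralizer has dimension $\ell^2$ and contains the skewfield generated by $y,t^\ell,x^\ell$ (one checks these three commute with $t,y^\ell,x^\ell-x$), with equality by dimension — and (iv), that $K(\f q)\cong L\otimes_{C(\f q)}L'$. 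Finally (v) follows as in Corollary~\ref{brauer}: in the Brauer group of $C(\f q)$ one has $[K(\f q)]=[L][L']$ with $[L]^\ell=[L']^\ell=1$, so $[K(\f q)]^\ell=1$, while $[K(\f q)]\neq 1$ because $K(\f q)$ is a noncommutative division algebra; hence the order is exactly $\ell$.

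I expect no serious obstacle, since every step is a transcription of the $\f g_\alpha$ case; the points requiring the most care are the degree arguments identifying the two centralizers in (i) and the verification that the generators $y,t^\ell,x^\ell$ of $L'$ genuinely centralize all of $L$, both of which rest only on the elementary commutation relations \eqref{comreli}--\eqref{comrell} in characteristic $\ell$. A minor logical point to respect is the ordering: \emph{(iii)} (hence the value $\ell^2$ of $\dim_{C(\f q)}L$) and \emph{(i)} must be in place before invoking the Double Centralizer Theorem for \emph{(ii)} and \emph{(iv)}.
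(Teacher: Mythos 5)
Your proof is correct and is essentially the paper's own argument: the paper proves this proposition precisely by transposing Proposition \ref{struc1} and Corollary \ref{brauer} via the relations \eqref{comreli} and \eqref{comrell}, which is exactly what you carry out (your explicit check that $[K(\f q)]\neq 1$ before concluding the order is $\ell$ is, if anything, slightly more careful than the paper's wording). No gaps to report.
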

\begin{proof}
Similar to that of Proposition \ref{struc1} and Corollary \ref{brauer} using relations \eqref{comreli} and \eqref{comrell}.
\end{proof}

\subsection{Separation of  $K(\f q)$ and $K(\f g_\alpha)$ as valued skewfields}By Corollary \ref{sepqga}, the isomorphism problem of the skewfields
$K(\f q)$ and $K(\f g_\alpha)$ remains open only when $\alpha\notin\kk_0$. As in Section \ref{continu} above, we solve it in the weaker following form.
\begin{theorem}We suppose that $\kk$ is of characteristic zero. For any $\alpha\in\kk$ such that $\alpha\notin\Q$, $K(\f q)$ and $K(\f g_\alpha)$ are not isomorphic as valued skewfields.
\end{theorem}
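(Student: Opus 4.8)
The plan is to mimic the structure of the converse implication in the proof of Theorem \ref{mainthm}, the only genuinely new feature being the presence of a Jordan block in the adjoint action of $\f q$. Suppose for contradiction that there is a valued isomorphism of $\kk$-algebras $\varphi\colon K(\f q)\to K(\f g_\alpha)$, and set $X=\varphi(x)$, $Y=\varphi(y)$, $Z=\varphi(z)$, the images of the canonical generators of $K(\f q)$. These satisfy in $K(\f g_\alpha)$ the defining relations of $\f q$, namely $YZ=ZY$, $XY-YX=Y$ and $XZ-ZX=Y+Z$, which I rewrite (multiplying by $X^{-1}$ on both sides) as $YX^{-1}-X^{-1}Y=X^{-1}YX^{-1}$ and $ZX^{-1}-X^{-1}Z=X^{-1}YX^{-1}+X^{-1}ZX^{-1}$. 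Since $\varphi$ preserves the valuation we have $v(Y)=v(Z)=0$ and $v(X)=-1$, so in the completion $F(\f g_\alpha)=\kk(y,z)(\!(u;\delta_\alpha)\!)$ of \eqref{UgPDO} I may expand $X^{-1}=c_1u+c_2u^2+\cdots$, $Y=y_0+y_1u+\cdots$, $Z=z_0+z_1u+\cdots$ with $c_1,y_0,z_0\in\kk(y,z)$ all non zero. First I would identify the terms of minimal valuation (here $u^2$) in the two relations above; exactly as in the first step of Theorem \ref{mainthm} this yields
\[D_\alpha(y_0)=c_1y_0\qquad\text{and}\qquad D_\alpha(z_0)=c_1(y_0+z_0).\]
A short argument then shows $y_0\notin\kk$ (otherwise $c_1=0$) and hence $z_0\notin\kk$.

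The crucial observation is that these two equations express that $y_0$ and $z_0$ form a Jordan block for $D_\alpha$ with the same ``eigenvalue'' $c_1$. To exploit this I would introduce $s=z_0y_0^{-1}\in\kk(y,z)$; a direct computation using both equations gives the inhomogeneous first order equation
\[D_\alpha(s)=c_1=\Phi_\alpha(y_0),\]
where $\Phi_\alpha$ denotes the logarithmic derivation attached to $D_\alpha$ as in the proof of Theorem \ref{mainthm}. This collapses the coupled system into a single scalar equation, which is the heart of the argument.

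I would then run the same pole analysis as in Theorem \ref{mainthm}, working over the Puiseux field $L$ of Lemma \ref{puiseux} and over $\overline\kk(z)$. Writing $c_1=\Phi_\alpha(y_0)$ through the factorization of $y_0$ in $L(y)$ shows that the only poles of $c_1$ in the variable $y$ are simple, located at the non zero roots and poles of $y_0$, while there is no pole at $y=0$. On the other hand, applying $D_\alpha$ to the factorization of $s$ in $L(y)$ shows that a pole of $s$ at any point $\rho\neq0$ would force a pole of $D_\alpha(s)=c_1$ of order at least $2$ (using that $\rho-D_\alpha(\rho)\neq0$ by Lemma \ref{puiseux}(ii)), while a pole at $\rho=0$ would force a non zero solution of $D_\alpha(h)=eh$ in $L$ with $e\geqslant1$---impossible for $\alpha\notin\Q$ by the argument of Lemma \ref{puiseux}(ii). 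Hence $s\in\kk(z)[y]$, and since $c_1=D_\alpha(s)$ is then a polynomial in $y$, the absence of $y$-poles of $\Phi_\alpha(y_0)$ forces $y_0=b(z)y^r$ to be a monomial in $y$; this makes $c_1=r+\Phi_\alpha(b)\in\kk(z)$, and comparing $y$-degrees in $D_\alpha(s)=c_1$ while using once more that $\alpha\notin\Q$ collapses $s$ to an element $p_0(z)\in\kk(z)$, so that $z_0=a(z)y^r$ with $a\in\kk(z)$.

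It remains to repeat the very same analysis in the variable $z$, where now $D_\alpha|_{\kk(z)}=\alpha z\partial_z$ and the surviving equation is $D_\alpha(p_0)=c_1=r+\Phi_\alpha(b)$ in $\kk(z)$. The pole analysis forces $b(z)=\mu z^m$ to be a monomial, whence $c_1=r+m\alpha$ is a constant. But $\alpha z\partial_z$ maps $\kk[z]$ to polynomials with vanishing constant term, so the equation $\alpha zp_0'=r+m\alpha$ can hold only if $r+m\alpha=0$; since $\alpha\notin\Q$ this gives $r=m=0$, hence $y_0=\mu\in\kk$, contradicting $y_0\notin\kk$. The main obstacle is precisely the inhomogeneous (Jordan) relation $D_\alpha(z_0)=c_1(y_0+z_0)$, which is not of the homogeneous logarithmic type treated in Theorem \ref{mainthm}; the substitution $s=z_0y_0^{-1}$ is what linearises it, after which the irrationality of $\alpha$ is invoked twice to exclude the existence of the generalized eigenvector.
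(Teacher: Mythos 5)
Your proof is correct — I checked each step — but it takes a genuinely different route from the paper's. The paper assumes the valued isomorphism in the opposite direction, from $K(\f g_\alpha)$ (denoted $K(\f g_\beta)$ there) into $K(\f q)$, which is equally legitimate since the inverse of a valued isomorphism is again valued; it then expands the images of the $\f g_\alpha$-generators in $F(\f q)=\kk(y,t)(\!(u\,;\,-\Delta)\!)$, in the coordinates of \eqref{kyt} where $\Delta(y)=y$ and $\Delta(t)=1$. Because the transported relations are the diagonal ones $XY-YX=Y$, $XZ-ZX=\alpha Z$, the lowest-order identification produces the same homogeneous logarithmic equation $\Psi(z_0)=\alpha\Psi(y_0)$ as in Theorem \ref{mainthm}, and the paper repeats that proof mutatis mutandis: partial fractions over the Puiseux field in $t$ force $y_0,z_0\in\kk\subset\ker\Delta$, contradicting \eqref{equadiff2}. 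You instead push the $\f q$-relations into $F(\f g_\alpha)$, so you face the non-diagonal (Jordan) system $D_\alpha(y_0)=c_1y_0$, $D_\alpha(z_0)=c_1(y_0+z_0)$, which is not of the logarithmic type treated in Theorem \ref{mainthm}; your substitution $s=z_0y_0^{-1}$, collapsing it to the inhomogeneous scalar equation $D_\alpha(s)=c_1=\Phi_\alpha(y_0)$, is the genuinely new ingredient. The ensuing pole analysis is sound: a pole of $s$ at $\rho\neq0$ gives a pole of $D_\alpha(s)$ of order at least $2$ because $\rho-D_\alpha(\rho)\neq0$ by Lemma \ref{puiseux}(ii), whereas $\Phi_\alpha(y_0)$ has only simple poles; a pole at $y=0$ would give a nonzero Puiseux solution of $D_\alpha(h)=eh$ with $e\geqslant1$, excluded for $\alpha\notin\Q$ by the same one-line computation as in Lemma \ref{puiseux}(ii); and the final descent to $\kk(z)$, where $\alpha zp_0'=r+m\alpha$ must lie in $z\kk[z]\cap\kk$, forces $r=m=0$, hence $y_0\in\kk$, the desired contradiction. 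The paper's choice of direction buys brevity — an almost verbatim reuse of Theorem \ref{mainthm} — while yours costs the linearization trick and a finer pole-order argument, but shows the method survives the nilpotent part of the adjoint action of $\f q$ and keeps every computation inside the completion $F(\f g_\alpha)$ already set up in \eqref{UgPDO}.
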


\begin{proof} We proceed by contradiction supposing that there exists a valued isomorphism $\varphi:K(\f  g_\beta)\to K(\f  q)$. 
We adapt {\it mutatis mutandis} calculations of the proof of Theorem \ref{mainthm} by considering the images 
\begin{equation}X=\varphi(x'), \ \  Y=\varphi(y'), \ \ Z=\varphi(z')\end{equation}
of the generators $x',y',z'$ of $K(\f  g_\beta)$. They satisfy in $K(\f  q)$ the relations
 \begin{equation}\label{relQq}
YZ=ZY,\ \ XY-YX=Y,\ \ XZ-ZX=\beta Z.\end{equation}
We use here the description \eqref{UqKF} of $K(\f  q)$ and $F(\f  q)$. The expansions  of $X,Y,Z$ in $F(\f  q)$ are of the same form as in relations
\eqref{devU}, \eqref{devY} and \eqref{devZ} with coefficients $b_i,y_i,z_i$ in $\kk(y,t)$. By identification of both sides in the commutation relations, 
we obtain similarly  to \eqref{equadiff} the differential equation in $\kk(y,t)$ :\begin{equation}\label{equadiff2}
y_0\notin\ker\Delta, \quad z_0\notin\ker\Delta, \quad\text{and}\quad \frac{\Delta(z_0)}{z_0}=\beta\,\frac{\Delta(y_0)}{y_0}.                                                             
\end{equation}
We introduce the logarithmic derivation $\Psi$ defined by $\Psi(f)=\frac{\Delta(f)}f$ for any $f\in\kk(y,t)$ to rewrite it as
\begin{equation}\label{equadifflog2}
\Psi(z_0)=\beta\Psi(y_0). 
\end{equation}
We embed $\kk(t)$ in the algebraically closed field  $L=\bigcup_{n\in\Z_{> 0}}\overline\kk(\!(t^{1/n})\!)$ and introduce the factorizations in $L(y)$ of the elements $y_0$ and $z_0$ of $\kk(t)(y)$
$$z_0=a(t)\prod_{i=1}^p(y-a_i(t))^{q_i}\qquad\text{et}\qquad y_0=b(t)\prod_{j=1}^s(y-b_j(t))^{r_j} ,$$
where $a,b$ non zero in $\kk(t)$, the zeros and poles $a_i$ are pairewise distincts elements in $L$ as the $b_j$'s,
with the same conventions on the notations as in formulae \eqref{decz0} and \eqref{decy0}. Since $\Delta(y)=y$, calculations similar to those in the second step of the proof of 
Theorem \ref{mainthm} show from \eqref{equadiff2} that there exist integers $q,r\in\Z$ such that
\begin{equation}\label{y0z0}
z_0=a(t)y^{q}\quad\text{and}\quad y_0=b(t)y^{r},\qquad a,b\in \kk(t).
\end{equation}
Thus $a$ and $b$ are non zero elements of $\kk(t)$ solutions of the differential equation
\begin{equation}\label{equadiff4}
\Psi(a)+q=\beta(\Psi(b)+r).\end{equation}
We factorize $a$ and $b$ in $\overline{\kk}(t)$ in the form
$$a(t)=\lambda\prod_{i=1}^s(t-\lambda_i)^{n_i} \quad\text{and}\quad b(t)=\mu\prod_{j=1}^{s'}(t-\mu_j)^{m_j}$$
where $\lambda,\mu\in\kk^\times$, the zeros and poles $\lambda_i\in\overline{\kk}$ are paisewise distincts as the $\mu_j$'s , 
the exponents $n_i$ and $m_j$ are non zero in $\Z$ with conventions $s=0$ if $a\in\kk^\times$ and $s'=0$ if $b\in\kk^\times$. We have
$$\Psi(a)=\sum_{i=1}^sn_i\Psi(t-\lambda_i)=\sum_{i=1}^sn_i\frac1{t-\lambda_i}.$$
Identity \eqref{equadiff4} becomes 
\begin{equation}\label{dces2}q+\sum_{i=1}^s\frac{n_i}{t-\lambda_i}=\beta r+\sum_{j=1}^{s'}\frac{\beta m_j}{t-\mu_j}.\end{equation}
This implies $q=\beta r$ then $q=r=0$ since $\beta\notin\Q$.
If there exists  $1\leqslant i\leqslant s$ such that $n_i\neq0$, it
follows from the unicity of the partial fraction decomposition in $\overline\kk(t)$ 
that there exists $1\leqslant j\leqslant s'$ such that $m_j\neq0$ and $\mu_j=\lambda_i$. We obtain by identification $$\frac{n_i}{t-\lambda_i}=\frac{ \beta m_j}{t-\mu_j}=\frac{ \beta m_j}{t-\lambda_i}$$ 
then $\beta = n_i (m_j)^{-1}$ and a contradiction with the assumption $\beta\notin\Q$. We deduce that all integers $n_i$ and $m_j$ are zero, hence $a=\lambda$ et $b=\mu$. 
Finally the equalities in \eqref{y0z0} reduce to $z_0=\lambda$ and $y_0=\mu$. 
This is impossible since $y_0$ and $z_0$ are not in $\ker\Delta$ by \eqref{equadiff2}. 
\end{proof}

%%%%%%%%%%%%%%%%%%%%%%%%%%%%%%%%%%%%%%%%%%%%%%%%%%%%%%%%%%%%%%%%%%%
%%%%%%%%%%%%%%%%%%%%%%%%%%%%%%%%%%%%%%%%%%%%%%%%%%%%%%%%%%%%%%%%%%%%%%%%%%%%%%%%%%%%%%%%%%%%%%%%%%%%%%%%%%%%%%%%%%%%

\section*{Additional comment}
%\section{The family $\f {p}_{\alpha,\beta}$}

As seen in \ref{prelimGK}, the classification of enveloping skewfields of Lie algebras of dimension 3 over an algebraically closed field $\kk$ is limited to the Weyl skewfield $\mathcal D_{1,1}(\kk)$ 
for the three classical examples $\f h$, $\f b$ and $\f{sl}(2)$, and to the skewfields $K(\f  g_\alpha)$ and $K(\f q)$ studied above. 
The results presented in sections 2, 3 and 4 are however all proved without the assumption that $\kk$ is algebraically closed. 
When $\kk$ is no longer algebraically closed, the classification of \ref{Jac} according to the dimension $d$ of the derived Lie subalgebra reveals two other families of Lie algebras. 
The case $d=3$  has been studied extensively in the article \cite{Mal}. 
In the case $d=2$ appears a new family of Lie algebras indexed when $\car\kk\not=2$ by a couple of scalars $(p,q)\in\kk\times \kk^\times$ such that $p^2-4q$ is not a square in $\kk$ and 
whose Lie brackets on  a basis $\{x,y,z\}$ are given by
\begin{equation*}\label{case3}
[x,y]=-qz,\quad [x,z]=y+pz,\quad [y,z]=0.
\end{equation*}
Exploratory results show that the situation splits into two cases depending on whether the parameter $p$ is zero or not, leading to potential further study.
\medskip

%%%%%%%%%%%%%%%%%%%%%%%%%%%%%%%%%%%%%%%%%%%%%%%%%%%%%%%%

%\bibliographystyle{amsalpha}
\bibliographystyle{plain}
\bibliography{ADL_13septembre}

\end{document}